\def\RCAo{\mathsf{RCA_0}}
\def\RCA{\mathsf{RCA_0}}
\def\ACA{\mathsf{ACA_0}}
\def\N{\mathbb{N}}
\newcommand{\mc}[1]{\mathcal{#1}}
\def\P2{\Pi^1_2}
\newcounter{menum}
{\begin{enumerate}%
\setcounter{enumi}{#1}}%
{\setcounter{menum}{\value{enumi}}\end{enumerate}}
\newtheorem{thm}{Theorem}[section]
\newtheorem{theorem}[thm]{Theorem}
\newtheorem{claim}{Claim}[thm]
\newtheorem*{claim*}{Claim}
\newtheorem{proposition}[thm]{Proposition}
\newtheorem{lemma}[thm]{Lemma}
\newtheorem{corollary}[thm]{Corollary}
\theoremstyle{definition}
\newtheorem{defi}{Definition}[section]
\newtheorem{definition}[defi]{Definition}
\newtheorem{remark}[thm]{Remark}
\newtheorem{question}[defi]{Question}
\newtheorem{example}[defi]{Example}
\newcommand\NN{\mathbb{N}}
\newcommand{\is}[1]{\mathsf{I}\Sigma^0_{#1}}
\newcommand{\cl} {\mathrm{cl}}
\newcommand\mcal{\mathcal}
\newcommand\arity{\mathrm{arity}}
\newcommand\interp[1]{[\![#1]\!]}
\newcommand\WO{\mathrm{WO}}
\newcommand\Tot{\mathrm{Tot}}
\newcommand{\ap}[1]{\langle #1 \rangle}
\newcommand{\bp}[1]{\left\lbrace #1 \right\rbrace}
\newcommand{\arcd}[2] {#1\xrightarrow{\downarrow} #2}
\newcommand{\arce}[2] {#1\xrightarrow{\Downarrow} #2}
\newcommand{\var}{\mathrm{Par}}
\newcommand\fun{\mathrm{Fun}}
\newcommand\op{\mathrm{Op}}
\newcommand\aexp{\mathrm{AExp}}
\newcommand\bexp{\mathrm{BExp}}
\newcommand\eexp{\mathrm{Exp}}
\newcommand\ddef{\mathrm{Def}}
\newcommand\prog{\mathrm{Prog}}
\newcommand\term{\mathrm{Term}}
\newcommand\temp{\mathrm{temp}}
\newcommand\esp{\mathrm{exp}}
\newcommand{\mboxb}[1]{\mbox{ \textbf{#1} }}
\newcommand{\mbb}[1]{\mbox{ \textbf{#1} }}
\newcommand{\mb}{\mathbf}
\newcommand\tb{\textbf}
\newcommand{\conc}{{{}^\smallfrown}}
\newcommand{\last}{\text{last}}
\newcommand{\pos}{\text{pos}}
\newcommand{\ev}{\downarrow}
\newcommand{\nev}{\uparrow}
\newcommand{\biimp}{\Longleftrightarrow}
\title{The strength of SCT soundness}
\author[1]{Emanuele Frittaion}
\author[2]{Florian Pelupessy}
\author[3]{Silvia Steila}
\author[4]{Keita Yokoyama}
\affil[1]{ \footnotesize  Department of Mathematics,
	FCUL - Universidade de Lisboa, Portugal}
\affil[1]{	\textit{emanuelefrittaion@gmail.com}}
\affil[2]{\footnotesize  Mathematical Institute, 
	Tohoku University,  Japan}
\affil[2]{	\textit{florian.pelupessy@operamail.com}}
\affil[3]{\footnotesize  Institute of Computer Science, University of Bern, Switzerland}
\affil[3]{	\textit{steila@inf.unibe.ch}}
\affil[4]{\footnotesize  School of Information Science, Japan Advanced Institute of Science and Technology, Japan}
\affil[4]{	\textit{y-keita@jaist.ac.jp}}
\date{}
\begin{document}
\maketitle
\tableofcontents

\newpage

\begin{abstract}
	In this paper we continue the study, from Frittaion, Steila and Yokoyama (2017), on size-change termination in the context of Reverse Mathematics. We analyze the soundness of the SCT method. In particular, we prove that the statement ``any program which satisfies the combinatorial condition provided by the SCT criterion is terminating'' is equivalent to $\WO(\omega_3)$ over $\RCA$.
\end{abstract}

\noindent \textbf{Keywords}: Ramsey's theorem for pairs, Size-change termination, Reverse Mathematics, Soundness, Well-ordering principles.

\section{Introduction}

Informally, a recursive definition of a function has the SCT property if, in every infinite sequence of calls, there is some infinite sequence of parameter values which is weakly decreasing and strictly decreasing infinitely many times. If the parameter values are well-ordered, as in the case of natural numbers with the natural  ordering, there cannot be such a sequence. Thus, the SCT property is a sufficient condition for termination. 

The SCT property concerns the semantics of the program. In \cite{Jones1} Lee, Jones and Ben-Amram provided an alternative property, equivalent to being SCT, but which can be statically verified from the definition of the program. Indeed, they proved the following:

\begin{theorem}[SCT criterion]\label{sct criterion}
	Let $\mathcal{G}$ be a description of a program $P$. Then $\mathcal{G}$ is SCT  iff every idempotent $G\in{\sf cl}(\mathcal{G})$ has an arc $\arcd{x}{x}$. 
\end{theorem}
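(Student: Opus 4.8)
The plan is to establish the two implications separately, with the combinatorial core of the ``if'' direction supplied by Ramsey's theorem for pairs. Throughout I would use that $\clG$ is \emph{finite} (there are only finitely many size-change graphs over a fixed finite program) and that composition of size-change graphs is associative, so that composing along any admissible finite call sequence yields a well-defined element of $\clG$; these facts are built into the definitions preceding the statement.

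For the direction \emph{criterion $\Rightarrow$ SCT}, I would take an arbitrary infinite admissible multipath $G_0, G_1, G_2, \dots$ drawn from $\mcal G$ and color each unordered pair $\{i<j\}$ of positions by the composite graph $G_i ; \cdots ; G_{j-1} \in \clG$. Since $\clG$ is finite this is a coloring of pairs with finitely many colors, so $\RTt$ (equivalently, its finite-color version over $\RCA$) yields an infinite homogeneous set $i_0 < i_1 < i_2 < \cdots$ whose pairs all receive one color $G$. Comparing the colors of $\{i_0,i_1\}$, $\{i_1,i_2\}$ and $\{i_0,i_2\}$ gives $G;G = G$, so $G$ is idempotent, and the hypothesis then furnishes an arc $\arcd{x}{x}$ in $G$. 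Unwinding the definition of composition, each block from $i_k$ to $i_{k+1}$ realizes this arc as a finite thread from parameter $x$ to itself that is weakly descending and strictly descending at least once; concatenating the blocks produces an infinite thread of the original multipath that descends strictly infinitely often, which is precisely the SCT condition.

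For the converse \emph{SCT $\Rightarrow$ criterion} I would argue by contraposition: suppose some idempotent $G \in \clG$ has no arc $\arcd{x}{x}$. As $G \in \clG$, it is realized as a composite $G = H_0 ; \cdots ; H_{n-1}$ along a finite admissible call sequence; by idempotence the infinite multipath $H_0,\dots,H_{n-1},H_0,\dots,H_{n-1},\dots$ obtained by repeating this block is again admissible, and I claim it has no infinitely descending thread. Any thread occupies, at the block boundaries, parameters from a finite set, so some parameter $x$ recurs at infinitely many boundaries, while the composite across any positive number of blocks is again $G$ by idempotence. If such a thread descended strictly infinitely often, some strict descent would lie between two boundary-occurrences of $x$, and by the composition rule (an arc in a composite is strict exactly when the corresponding thread carries a strict edge) this would manifest as an arc $\arcd{x}{x}$ in $G$, contradicting the assumption. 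Hence no thread descends infinitely, so $\mcal G$ is not SCT.

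I expect the main obstacle to be twofold. First, one must track faithfully, in both directions, the translation between arcs of composite graphs and threads of the multipath: an arc of a composite is strict precisely when the associated thread contains at least one strict edge, and this bookkeeping is what guarantees that the thread extracted in the first direction is genuinely \emph{infinitely} descending rather than merely nonincreasing. Second, and most significant from the Reverse Mathematics perspective, is pinning down exactly where Ramsey's theorem enters: the ``if'' direction rests essentially on $\RTt$ applied to the composition coloring, and making this application precise---including the appeal to finiteness of $\clG$ and the verification that the homogeneous color is idempotent---is the crux of the whole argument and the point at which the logical strength of the statement concentrates.
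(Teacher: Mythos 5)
Your argument is essentially correct, but you should know that the paper itself contains no proof of this statement: Theorem \ref{sct criterion} is quoted as background from Lee, Jones and Ben-Amram \cite{Jones1}, so there is no internal proof to measure yours against. What you wrote is the classical argument, and it matches what the paper says about the original proof: the ``criterion $\Rightarrow$ SCT'' direction via coloring pairs $\{i<j\}$ of positions by the composite $G_i;\cdots;G_{j-1}$ and extracting an idempotent homogeneous color with $\RTt$ is exactly the Ramsey-based proof the paper attributes to \cite{Jones1}, and your contrapositive direction (repeat the block realizing an idempotent $G$ with no arc $\arcd{x}{x}$, then use the pigeonhole principle on block boundaries to show no infinite descent exists) is the standard converse, which the paper notes is the direction provable in $\RCA$. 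Two refinements are worth recording. First, your parenthetical claim that $\RTt$ is ``equivalent over $\RCA$'' to its arbitrary-finite-color version is loose: for a fixed description the number of colors $|\clG|$ is a fixed finite number, so iterating $\RTt$ suffices, but the uniform finite-color version $\RT^2_{<\infty}$ is genuinely stronger over $\RCA$, and this distinction matters in the formalized setting. Second, the authors' earlier work \cite{FSY2017} — which is where this theorem is actually analyzed — shows that full $\RTt$ is more than is needed: the ISCT-to-MSCT direction follows from a special instance of Ramsey's theorem for pairs that is equivalent to $\is2$ over $\RCA$. So your identification of the Ramsey application as ``the point at which the logical strength concentrates'' is exactly right, but the concentration is at $\is2$, strictly below $\RTt$.
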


Here, ${\sf cl}(\mathcal{G})$ is a set of graphs, which can be extracted directly from the code of the program $P$. We refer to Section \ref{subsection:sct} for definitions.

The SCT criterion leads us to consider two distinct, although classically equivalent, properties.  For convenience of exposition, we use the following terminology.  See Section \ref{definitions} for definitions. 

\begin{definition}
	\begin{itemize}
		\item $\mathcal{G}$ is \tb{MSCT} (Multipath-Size-Change terminating) if $\mathcal{G}$ is SCT. 
		\item $\mathcal{G}$ is \tb{ISCT} (Idempotent-Size-Change terminating) if every idempotent $G\in{\sf cl}(\mathcal{G})$ has an arc $\arcd{x}{x}$. 
	\end{itemize} 
\end{definition}

With this terminology at hand, we outline the following three-step argument from \cite{Jones1} to prove the termination of a first order functional program $P$:
\begin{itemize}
	\item Verify that $P$ is ISCT;
	\item Apply the SCT criterion to prove that $P$ is MSCT;
	\item Derive the termination of $P$ from the fact that ``every MSCT program terminates''. 
\end{itemize}

Since the Ackermann function is ISCT provably in $\RCA$ (see Section \ref{Section:Ackermann}), a natural question arises: in which theory can we carry out the above argument? Specifically, in which theory can we prove the SCT criterion? Similarly, in which theory can we prove that every MSCT program terminates?

It is clear that this cannot be done in weak theories, such as $\RCA$, which do not prove the termination of the Ackermann function. In \cite{FSY2017} we studied the strength of the SCT criterion. We proved that the SCT criterion follows from a special instance of Ramsey's theorem for pairs, which turns out to be  equivalent to $\is2$ over $\RCA$. In the present paper we focus our attention on the second question, i.e., the soundness of the MSCT principle. Moreover, we investigate in which theory we can prove directly the termination of programs which are ISCT, without applying the SCT criterion.  
We thus consider the following two soundness statements.

\begin{theorem}[ISCT (resp.\ MSCT) Soundness]\label{soundness}
	Let $\mathcal{G}$ be a safe description of a program $P$. If $\mathcal{G}$ is ISCT (resp. MSCT) then  $P$ is terminating.
\end{theorem}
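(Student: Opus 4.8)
The plan is to argue by contraposition in both cases: assume that $P$ does not terminate and derive a contradiction from the (I/M)SCT hypothesis. If $P$ is non-terminating it admits an infinite computation, and since $\mathcal{G}$ is a \emph{safe} description of $P$, reading off the successive calls along this computation produces an infinite sequence of calls forming a multipath in $\mathcal{G}$, together with the actual parameter (size) values occurring at each step. The extraction of this infinite multipath from the computation is $\Delta^0_1$ in the computation and so is available already in $\RCA$; the whole difficulty will be concentrated in turning the combinatorial hypothesis into an impossible descending sequence of naturals.

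For the MSCT case the argument will be short. Applying the MSCT (i.e.\ SCT) property of $\mathcal{G}$ to the infinite multipath just produced yields a thread, that is, an infinite sequence of parameter values that is weakly decreasing and strictly decreasing infinitely often. Since every such value is a natural number, this contradicts the well-foundedness of $\N$: if $c$ is the first value of the thread, searching successively for the first $c+1$ positions at which the thread strictly drops produces a later value that is $<0$, which is absurd. This reduction uses only bounded search iterated a standard number of times and is carried out in $\RCA$.

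For the ISCT case I would offer two routes. The clean route simply invokes the SCT criterion (Theorem~\ref{sct criterion}): ISCT implies MSCT, and the previous paragraph then finishes the proof, the cost being the strength of the criterion itself. The direct route, which avoids the criterion, mimics its proof instead: from the infinite multipath one runs a Ramsey's-theorem-for-pairs ($\RTt$) argument to locate an idempotent $G\in\clG$ arising as the composite of cofinally many blocks of the multipath; the ISCT hypothesis then supplies an arc $\arcd{x}{x}$ in this $G$, and splicing the corresponding strict decreases across the successive blocks assembles a thread that is weakly decreasing and strictly decreasing infinitely often, contradicting the well-foundedness of $\N$ exactly as before.

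The main obstacle lies entirely in the direct ISCT route: the Ramsey-type extraction of the cofinal idempotent and the transfinite bookkeeping needed to glue its self-arc into a single descending thread. This is where the combinatorics of $\clG$ drives the strength up, and carrying the extraction out within $\WO(\omega_3)$ is precisely the upper-bound half of the announced equivalence. By contrast, the MSCT case, and the chaining of the ISCT case through Theorem~\ref{sct criterion}, are routine once the infinite multipath has been extracted, the strength in the latter being inherited from the criterion rather than contributed by the soundness step itself.
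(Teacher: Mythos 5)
Your MSCT argument is the standard Lee--Jones--Ben-Amram proof, and its one non-trivial step is exactly where it breaks: the claim that the infinite multipath, \emph{together with the actual parameter values}, can be read off the infinite computation by a $\Delta^0_1$ procedure and is therefore available in $\RCA$. This is false, and Section 3 of the paper is devoted to precisely this point. Because calls are nested, an infinite reduction sequence traverses the activation tree $T_P^{\mathbf{u}}$ depth-first; it witnesses that the tree is infinite (Proposition \ref{termination tree}) but not that it has an infinite branch. To know that a state transition $(f,\mathbf{u})\xrightarrow{\tau}(g,\mathbf{v})$ actually occurs, one must know that the argument subcomputations return the values $\mathbf{v}$, and selecting the nested calls that never return is a $\Pi^0_1$ choice along the sequence; turning this into an infinite branch is K\"{o}nig's lemma for a finitely branching $\Sigma^0_1$ tree, which is provable in $\ACA$ (Proposition \ref{ACA}) and, by the proposition that follows it in Section 3, in fact \emph{implies} $\ACA$ over $\RCA$. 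So your location of the difficulty is inverted: the descent-to-contradiction step you dwell on is routine in $\RCA$ (a least-value argument suffices), while the extraction you dismiss as $\Delta^0_1$ carries all the strength. A quick sanity check: if your first paragraph were correct, MSCT soundness would be provable in $\RCA$, contradicting Lemma \ref{lemma: AckSCT} (the P\'{e}ter-Ackermann function is MSCT provably in $\RCA$, yet $\RCA$ does not prove its termination) and the lower bound of Remark \ref{Remark: fromMSCTtoAck}.

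What you have written is therefore a correct proof only at the level of $\ACA$, whereas the paper proves the theorem from $\WO(\omega_3)$ (Theorem \ref{upper bound}) by a genuinely different method that never extracts an infinite branch: following Tait, one assigns to each term $t_n$ of the (deterministic, hence $\RCA$-definable) reduction sequence an ordinal $\alpha_n<\omega_3$, namely a natural sum, over the subterms $g(\mb s)$ of $t_n$ with $g\in\fun$, of ordinals $\omega^{\gamma}$ or $\omega^{\gamma}\cdot a$, where $\gamma<\omega^\omega$ codes a state-transition history of bounded length. Finite Ramsey's theorem (Lemma \ref{bound}) bounds the length of non-foldable multipaths, and the ISCT hypothesis supplies the strict arc $\arcd{x}{x}$ in the folded idempotent, which forces the ordinal to drop at every proper reduction; $\WO(\omega_3)$ then yields termination directly. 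Note also that your ``direct'' ISCT route via infinite Ramsey's theorem for pairs imports strength the paper carefully avoids (only the finite Ramsey theorem is used), and your ``clean'' route through Theorem \ref{sct criterion} costs $\is2$ on top of the MSCT case. None of your routes can recover the advertised calibration at $\WO(\omega_3)$; read as an $\ACA$-proof, your MSCT argument coincides with the paper's Section 3 proof sketch, but as a proof in $\RCA$ of the statement it is broken at its first step.
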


Following  standard notation (e.g., \cite{SOSOA}), $\WO(\alpha)$ states that the linear ordering $\alpha$ is well-ordered. In this paper we show that over $\RCA$,
\begin{itemize}
 \item  ISCT soundness $=$ $\WO(\omega_3)$  $\geq $ MSCT soundness $ > \WO(\omega_2)$,
\end{itemize}
where $\omega_2=\omega^\omega$ and $\omega_3=\omega^{\omega^\omega}$.

One direction of the SCT criterion is provable in $\RCA$. That is, within $\RCA$, every MSCT description is also ISCT. Therefore, provably in $\RCA$,  ISCT soundness implies MSCT soundness. Moreover, over $\RCA+\is2$, where ISCT and MSCT are equivalent notions, we have 

\begin{itemize}
	\item MSCT soundness $=$ $\WO(\omega_3)$.
\end{itemize}

It is still an open question whether the inequality is strict over $\RCA$: 

\begin{question}\label{question}
Is MSCT soundness equivalent to $\WO(\omega_3)$  over $\RCA$?
\end{question}

As a consequence of our analysis, we provide ordinal bounds for the termination of first order functional programs which are ISCT and, in particular, a new proof of primitive recursive bounds for the tail-recursive ISCT programs. 

\subsection{Reverse mathematics}

Reverse mathematics is a research program in mathematical logic and foundations of mathematics. We refer to Simpson \cite{SOSOA} and Hirschfeldt \cite{Hirschfeldt} for a general overview. The goal is to assess the relative logical  strength of theorems from ordinary (non set-theoretic) mathematics, thus making sense of statements like Theorem A is stronger than Theorem B or Theorem A and Theorem B are equivalent. This program is carried out in second-order arithmetic, a formal system for natural numbers and sets of natural numbers, which is expressive enough to accommodate large parts of ordinary mathematics. Given a theorem $A$, one looks for the minimal subsystem $\Xi$ needed to prove $A$, that is, $A$ follows from $\Xi$ and all the axioms from $\Xi$ are provable from $A$ over a base system $\Xi_0$. 

 
The most important subsystems of second-order arithmetic are obtained by restricting comprehension and induction to some class $\Gamma$ of formulas. 
\begin{itemize}
	\item $\Gamma$-Comprehension: $\exists X\forall n(n\in X\leftrightarrow \varphi(n))$, for $\varphi\in\Gamma$	
	\item $\Gamma$-Induction: $\varphi(0)\land \forall n(\varphi(n)\rightarrow\varphi(n+1))\rightarrow\forall n\varphi(n)$, for $\varphi\in \Gamma$
\end{itemize} 
In practice, one uses comprehension to define a set and induction to verify that the set thus defined has the required properties. 

Let us briefly recall the definition of $\Sigma^0_n$ formulas. Bounded quantifiers are of the form $\forall x<t$ and $\exists x<t$, with $x$ being a number variable and $t$ a number term. A formula $\varphi$ is $\Sigma^0_n$ if it is of the form $\exists x_1\forall x_2\ldots Qx_n\theta$, where the $x_i$'s are number variables and all quantifiers in $\theta$ are bounded. Similarly, $\varphi$ is $\Pi^0_n$ if it is of the form $\forall x_1\exists x_2\ldots Qx_n\theta$, with $x_i$'s and $\theta$ as above. A formula is arithmetical if it is $\Sigma^0_n$ for some $n$. Note that $\theta$ can contain set variables. 
 
In this paper we are mainly concerned with the base system $\RCA$ (Recursive Comprehension Axiom), the system $\ACA$ (Arithmetical Comprehension Axiom), the induction schemes $\is{n}$ (induction for $\Sigma^0_n$ formulas), and the principles $\WO(\omega_n)$ ($\omega_n$ is well-ordered), where $n$ is a standard natural number.

$\RCA$ consists of the usual first-order axioms of Peano arithmetic, without induction, plus comprehension and induction, restricted to $\Delta^0_1$ and $\Sigma^0_1$ formulas respectively. Roughly,  $\RCA$ proves that all computable sets exist. 
$\ACA$ is obtained from $\RCA$ by adding arithmetical comprehension (comprehension for all arithmetical formulas). Roughly, $\ACA$ proves that the Turing jump of every set exists. 

The statements $\is{n}$ and $\WO(\omega_n)$ form two intertwined hierarchies  below $\ACA$:
\begin{center}
\begin{tikzpicture}
  \matrix (m) [matrix of math nodes]
  {
\ACA	&\longrightarrow 	&\is{n+3} 			& \longrightarrow &\is{n+2} 				& 							&	\\
		& 					&\downarrow		&  				 &\downarrow			& 						      	&	\\
		&					&\WO(\omega_{n+3}) & \longrightarrow & \WO(\omega_{n+2})	& \centernot\longleftarrow 	& \RCA	\\
  };     
\end{tikzpicture}
\end{center}


The implications in the picture are strict. It is known that $\WO(\omega^\omega)$ is equivalent, over $\RCA$, to the totality of the (relativized) P\'{e}ter-Ackermann function $A^2_f$ \cite{KK}. In this paper we show that $\WO(\omega_3)$ is equivalent to the totality of all $A^n_f$, where $A^n_f$ is a natural generalization of $A^2_f$ to $n$ arguments. 

As mentioned earlier, the original proof of the SCT criterion makes use of Ramsey's theorem for pairs \cite{Ramsey}. This theorem states that for every coloring, on the edges of the complete graph on countably many nodes, in $k$ colors, there exists an infinite homogeneous set. I.e., there exists an infinite subset of the nodes such that any two elements in this subset are connected in the same color. Ramsey's theorem for pairs, in symbols RT$^2$, is a key principle in reverse mathematics (e.g., see \cite{SOSOA}, \cite{Hirschfeldt}, \cite{CJS}, \cite{CSY2012}). 

\section{SCT framework}\label{definitions}

\subsection{Syntax}

\begin{align*}
	x\in \var  & \  \text{ parameter}\\
	f \in \fun   & \ \text{ function identifier }\\
	f  \in \op   & \ \text{  primitive operator}\\
	a\in \aexp  & \ \text{ arithmetic expression}  \\
	&::= \ x \mid x+1 \mid x-1 \mid f(a,\ldots,a) \\
	b \in \bexp   & \ \text{ boolean expression}       \\
	&::= \  x=0 \mid x=1 \mid x<y \mid x\leq y \mid b\land b \mid b\lor b \mid \neg b \\
	e \in \eexp & \ \text{ expression} \\ 
	&::= \  a \mid \mboxb{if } b \mboxb{ then } e \mboxb{ else } e \\
	d \in \ddef &  \ \text{ function definition} \\
	&::=\ f(x_0,\ldots,x_{n-1})= e \\
	P\in \prog & \ \text{ program} \\
	&::= \ d_0,\ldots,d_{m-1} 
\end{align*}
A program $P$ is a list of finitely many  defining equations  $f(x_0,\ldots,x_{n-1})=e^f$, where $f\in\fun$ and $e^f$ is an expression, called the \tb{body} of $f$. Let  $x_0,\ldots,x_{n-1}$ be the \tb{parameters} of $f$, denoted $\var(f)$, and let $n$ be the \tb{arity} of $f$, denoted  $\arity(f)$. Function identifiers on the left-hand side of each equation are assumed to be distinct from one another.   By $\fun(P)$ we denote the set of function identifiers occurring in $P$ and by $\op(P)$ the set of primitive operators occurring in $P$. We usually suppress the reference to $P$ whenever it is clear from the context. 
The \tb{entry} function $f_{0}$ is the first in the program list. The idea is that  $P$ computes the partial function $f_{0}:\N^{\arity(f_{0})}\to\N$. 

\begin{example}
The following program computes the well-known P\'{e}ter-Ackermann function: 
\[
\begin{split}
A(x,y) = &\mbb{ if } x=0 \mbb{ then } y+1\\
&\mbb{ else if } y=0 \mbb{ then }  A(x-1,1)\\
&\qquad \mbb{ else }  A(x-1,A(x,y-1))\\
\end{split}
\]
\end{example}

\subsection{Semantics} The standard semantics for first order functional programs  is \emph{denotational semantics} (see, e.g.,  Lee, Jones and Ben-Amram \cite{Jones1}).
Another possible choice is \emph{operational semantics} (as defined in, e.g., \cite{Winskel93}). In our framework we find it natural and convenient to interpret programs as \emph{term rewriting systems}.

\tb{Notation}: We use $\mb u, \mb v$ for tuples of natural numbers and $\mb s, \mb t$ for tuples of terms.

In general a term rewriting system is a set of rules, i.e., objects of the form $s\to t$ for $s,t$ terms. We build up our terms by using natural numbers and function symbols (function identifiers and primitive operators). In particular, terms do not contain if-then. 
\[  t\in\term ::= u\in\N \mid f(t,\ldots,t) \]

Boolean expressions are decidable and we can think of a boolean expression $b$ with parameters in $x_0,\ldots,x_{n-1}$ as a primitive operator whose intended interpretation is a boolean function $\interp b:\NN^n\to 2$. For instance, we read $b(x_0,\ldots,x_{n-1})$ as $b(x_0,\ldots,x_{n-1})=0$. Symbols such as $0,1,+,-$  have the intended interpretation. We do not distinguish notationally between these symbols and their interpretation, relying on context to distinguish the two. For instance, $x+1$ is an expression if $x$ is a parameter, or the successor of $x$ if $x$ is a natural number.
 
Given an expression $e$ with parameters in $\mb x=x_0,\ldots x_{n-1}$ and a tuple  $\mathbf{u}\in\N^n$, we want to evaluate the expression $e$ on $\mb u$ and return a term $e(\mb u)$ (\footnote{Ultimately, this comes down to substitute $\mb x$ with $\mb u$ in the unique arithmetic subexpression of $e$ which is determined by the boolean tests for $\mb u$. That is, if $e_0,\ldots, e_{k-1}$ are the maximal arithmetic subexpressions of $e$, in the sense that they are not proper subexpressions of any arithmetic expression of $e$, then we have $e(\mb u)=e_i[\mb x/\mb u]$, where $i$ is uniquely determined.}). We can easily define $e(\mb u)$ by recursion on the construction of $e$ as follows:
\begin{itemize}
	\item  $x_i(\mb u)=u_i$, $(x_i+1)(\mb u)=u_i+1$, and $(x_i-1)(\mb u)=u_i-1$ if $u_i>0$, $0$ otherwise. 
	\item If  $e=f(e_0,\ldots, e_{k-1})$, then $e(\mb u)=f(e_0(\mb u),\ldots,e_{k-1}(\mb u))$. 
	\item  If $e=\mboxb{if } b \mboxb{ then } e_0 \mboxb{ else } e_1$, 
	then \[ e(\mb u)= \begin{cases} e_0(\mb u) & \text{if $\interp b(\mb u)=0$} \\ e_1(\mb u) & \text{otherwise.}\end{cases}\] 
\end{itemize}

Every subterm of a given term $t$ has a \tb{position} $\sigma$. We can use sequences of natural numbers to determine the position of a subterm. For instance, if $t=f(g(2),4)$,  then $t$ has position $\langle\rangle$ and $g(2)$ has position  $\langle 0\rangle$. Formally:

\begin{definition}[position]
	Let $s$ be a subterm of $t$ and  $\sigma$ be a sequence of natural numbers.  We say that the position of $s$ in $t$ is $\sigma$ (in symbols $\pos(s,t)=\sigma$) if $t=f(t_0,\ldots,t_{k-1})$ and one of the following holds: 
	\begin{itemize}
		\item $s=t$ and $\sigma = \ap{}$
		\item $s$ is a subterm of $t_i$ and $ \sigma = \ap{i} \conc \pos(s, t_i) $.
	\end{itemize}
	We write $t|_\sigma$ for the unique subterm of $t$ in position $\sigma$ (if it exists).
\end{definition}

Similarly,  every subexpression of a given expression $e$  has a \tb{position} $\tau\in\NN^{<\NN}$, and we write $e|_\tau$ for the unique subexpression of $e$ in position $\tau$ (if it exists). Formally:
\begin{definition}[position] Let $e,e'$ be expressions and $\tau\in\N^{<\N}$. We say that $e'$ has position $\tau$ in $e$ if one of the following holds:
	\begin{itemize}	
		\item 	$e=e'$ and $\tau=\langle\rangle$;
		\item   $e=f(e_0,\ldots,e_{n-1})$, $\tau=\ap{i}\conc \tau'$, and $e'$ has position $\tau'$ in $e_i$;
		\item $e= \mboxb{if } b \mboxb{ then } e_0 \mboxb{ else } e_1$, $\tau= \ap{i}\conc \tau'$ with $i<2$, and $e'$ has position $\tau'$ in $e_i$.
	\end{itemize}
\end{definition}

Fix an an interpretation $\interp \cdot$ of all primitive operators, that is, for all $f\in\op$ let \[\interp f\colon\N^{\arity(f)}\to\N.\] 

We are now ready to define, given a program $P$, a term rewriting system $T_P$.

\begin{definition}[rules and reduction]
A \tb{rule} is of the form $f(\mb u)\to e^f(\mb u)$ for $f\in\fun$ or $f(\mb u)\to {\interp f}(\mb u)$ for $f\in\op$. 	A  \tb{one-step reduction} $t \rightarrow_P s$ is given by replacing the leftmost subterm $f(\mb u)$ of $t$ according to the rule. We write $s= t[f(\mb u)]_\rho$, where $\rho$ is the position of $f(\mb u)$ in $t$.
\end{definition}

Note that $\rightarrow_P$ is decidable.

\begin{example}
Suppose we want to compute $A(2,3)$, the value of the  P\'{e}ter-Ackermann function at $(2,3)$. According to the definition we have:
\[
A(2,3)\to A(1,A(2,2))\to A(1,A(1,A(2,1)))\to \ldots
\]
\end{example}

Use $\rightarrow_P^*$ to denote the reflexive transitive closure of $\rightarrow_P$.

\begin{definition}[state transition]
	For $f,g\in\fun$ and $\tau\in\NN^{<\N}$, define a \tb{state transition} $(f,\mathbf{u})\xrightarrow{\tau}(g, \mb v)$ by $e^f|_\tau(\mb u) = g(\mb s)$ and $s_i \rightarrow_P^*  v_i$ for all $i<\arity(g)$. 
\end{definition} 

For every subterm $s$ of $e^f(\mb u)$ there exists a unique position $\tau$ in $e^f$ such that $s=e^f|_\tau(\mb u)$. We say that $\langle \tau, f, g\rangle$ is a \tb{call} from $f$ to $g$ and write $\tau:f\to g$. 
It is worth  noticing that there are only finitely many $\tau$'s and hence finitely many calls $\tau:f\to g$. This apparently obvious fact is essential for the SCT criterion (from ISCT to MSCT) and for the SCT soundness.

We can extend the state transition relation to $(f,\mathbf{u})\xrightarrow{\tau}(g, \mb t)$ by the same definition.

The relations $\to_P^*$ and $\xrightarrow{\tau}$ are $\Sigma^0_1$. In particular, the latter is $\Sigma^0_1$ by $\sf{B}\Sigma^0_1$.

\begin{definition}[reduction sequence]
	A \tb{reduction sequence} of $P$  is a sequence of terms $t_0\to_P t_1\to_P t_2\to_P\ldots$. Write $t\downarrow s$ if there exists a reduction sequence $t=t_0\to_Pt_1\to_Pt_2\to_P\ldots\to_P t_l=s$.
\end{definition}

\begin{remark}
	Our definition of reduction is deterministic (at each step there is at most one possible reduction). It easily follows that for every $t$ there exists a unique reduction sequence starting with $t$. 
\end{remark}
	
\begin{lemma}\label{lemma: unicita}
	Given terms $t_0$, $t_1$, $t_2$, if $t_0 \downarrow t_1$ and $t_0 \downarrow t_2$ then $t_0 \to^*_P t_1 \to^*_P t_2$ or $t_0 \to^*_P t_2 \to^*_P t_1$ (This includes the case $t_1=t_2$). Additionally if $t_1$ and $t_2$ are natural numbers, then $t_1=t_2$. 
\end{lemma}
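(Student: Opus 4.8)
The plan is to reduce everything to the determinism of the one-step reduction $\to_P$. Since each step rewrites the \emph{leftmost} redex $f(\mb u)$ by the unique rule associated to $f$ (the defining equation $e^f$ if $f\in\fun$, the interpretation $\interp f$ if $f\in\op$), the relation $\to_P$ is functional: if $t\to_P s$ and $t\to_P s'$ then $s=s'$. This is exactly the determinism noted in the Remark preceding the lemma, and I would first isolate it as the single fact driving the argument. Once $\to_P$ is functional, the unique maximal reduction sequence issuing from $t_0$ is well defined, and both $t_1$ and $t_2$ must appear as entries of it; the conclusion is then just that the entries of a sequence are linearly preordered by their index.

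Concretely, suppose $t_0\downarrow t_1$ and $t_0\downarrow t_2$, witnessed by finite reduction sequences $(a_i)_{i\le m}$ and $(b_j)_{j\le n}$ with $a_0=b_0=t_0$, $a_m=t_1$, $b_n=t_2$. I would prove by induction on $i\le\min(m,n)$ that $a_i=b_i$: the base case is $a_0=b_0=t_0$, and in the step $a_i=b_i$ together with $a_i\to_P a_{i+1}$ and $b_i\to_P b_{i+1}$ forces $a_{i+1}=b_{i+1}$ by functionality. Taking $\ell=\min(m,n)$, if $m\le n$ we get $a_m=b_m$, i.e.\ $t_1=b_m$, whence $t_0\to^*_P t_1=b_m\to^*_P b_n=t_2$; the symmetric case $n\le m$ gives $t_0\to^*_P t_2\to^*_P t_1$. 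The case $t_1=t_2$ is absorbed when $m=n$. Since the sequences are coded finite objects and term equality is decidable, this induction is over a bounded, $\Delta^0_0$ predicate, hence available in $\RCA$.

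For the additional claim, I would observe that a bare natural number is a normal form: a term $u\in\N$ contains no subterm of the form $f(\mb u)$, so no one-step reduction applies to it. Thus if, say, $t_0\to^*_P t_1\to^*_P t_2$ with $t_1\in\N$, the reduction sequence from $t_1$ to $t_2$ has length $0$, giving $t_1=t_2$; symmetrically in the other case. I expect no genuine obstacle here: the only points requiring care are spelling out that ``leftmost redex plus its associated rule'' really determines $s$ uniquely (so that functionality is verified from the syntax rather than merely asserted), and checking that the agreement-of-sequences induction stays at a complexity provable in $\RCA$. Both are routine once determinism is made explicit.
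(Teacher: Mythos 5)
Your proof is correct and follows essentially the same route as the paper, whose own proof is just the one-line sketch ``by induction on the length of the reduction sequences and exploiting the fact that the reduction is deterministic.'' You have simply filled in the details of that argument: functionality of $\to_P$, the index-wise agreement induction (which, as you note, is $\Sigma^0_0$ and hence available in $\RCA$), and the observation that natural numbers are normal forms.
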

\begin{proof}
	By induction on the length of the reduction sequences and exploiting the fact that the reduction is deterministic. 
\end{proof}

\begin{definition}[termination]
	We say that $P$ \tb{terminates} on $\mathbf{u}$ if $f(\mathbf{u})\downarrow v$ for some natural number $v$, where $f$ is the entry function of $P$. We say that $P$ is \tb{terminating} if $P$ terminates on every input.  We also write $f(\mb u)=v$ for $f(\mb u)\ev v$ and $f(\mb u)\ev$ if there exists a $v$ such that $f(\mb u)\ev v$.
\end{definition}

\subsection{Size-change graphs}\label{subsection:sct}

We briefly recall the main definitions from \cite{FSY2017}.

A \tb{size-change graph} $G:f\to g$ is a finite set of constraints between the parameters of $f$ and  the parameters of $g$.  Constraints are of the form $x>y$ and $x\geq y$, where $x\in\var(f)$ and $y\in\var(g)$. Formally, we represent size-change graphs $G:f\to g$ as bipartite  graphs  with edges of the form $\arcd{x}{y}$ (a strict arc denoting the constraint $x>y$) and $\arce{x}{y}$ (a non-strict arc denoting the constraint $x\geq y$) with $x\in \var(f)$ and $y\in\var(g)$. We write $x\to y\in G$ as a shorthand for $\arcd{x}{y}\in G\lor \arce{x}{y}\in G$.

To $G:f\to g$ we can associate a transition relation $\xrightarrow{G}$ consisting of state transitions $(f,\mb u)\xrightarrow{G}(g,\mb v)$ for all  $\mb u, \mb v$ satisfying the given constraints. Note that $\xrightarrow{G}$ is decidable. Moreover, given $G_0:f\to g$ and $G_1:g\to h$, we can define the \tb{composition} $G_0;G_1:f\to h$ such that $\xrightarrow{G_0}\circ\xrightarrow{G_1}\subseteq\xrightarrow{G_0;G_1}$. 

The composition of two edges $\arce{x}{y}$ and $\arce{y}{z}$ is the edge $\arce{x}{z}$. In all other cases the composition of an edge, from $x$ to $y$, with an edge from $y$ to $z$ is the edge $\arcd{x}{z}$. The composition $G_0;G_1$ consists of all compositions of edges $x \to y \in G_0$ with edges $y \to z \in G_1$, with the exception of $\arce{x}{z}$ if $G_0;G_1$ contains $\arcd{x}{z}$. Formally: 
	\[
	\begin{split}
	E= \{\arcd{x}{z} : \ &\exists y \in \var(g)\  \exists r\in \bp{\downarrow,\Downarrow}((\arcd{x}{y}\in G_0 \wedge y\xrightarrow{r}z \in G_1) \\
	& \vee (x\xrightarrow{r}y\in G_0  \wedge \arcd{y}{z}\in G_1))\}\\
	\cup \{\arce{x}{z} : \ &\exists y \in \var(g)(\arce{x}{y}\in G_0 \wedge \arce{y}{z}\in G_1) \wedge \forall y \in \var(g) \\
	&\forall r, r'\in \bp{\downarrow,\Downarrow} ((x \xrightarrow{r} y \in G_0 \wedge y \xrightarrow{r'} z \in G_1) \implies r=r'=\mathord \Downarrow)\}.
	\end{split}
	\]

A \tb{description} $\mcal G$ of a program $P$ consists of size-change graphs $G_\tau:f\to g$ for any call $\tau:f\to g$ of $P$.   We say that $\mcal G$ is \tb{safe} if $\xrightarrow{\tau}\subseteq\xrightarrow{G_\tau}$ for all calls $\tau$ of $P$.

\begin{definition}
$\mcal G$ is \tb{MSCT} if for every infinite \tb{multipath} $M=G_0,G_1,\ldots$, i.e., an infinite sequence of size-change graphs with $G_i:f_i\to f_{i+1}$ and $G_{i+1}:f_{i+1}\to f_{i+2}$, contains an \tb{infinite descent}, i.e., a sequence of the form $x_t\to x_{t+1}\to x_{t+2}\to\ldots \to x_i\to\ldots$ with $x_i\in\var(f_i)$ such that for all $i$ we have $x_i\to x_{i+1}\in G_i$ and for infinitely many $i$ we have $\arcd{x_i}{x_{i+1}}\in G_i$. 
\end{definition}

Let $\cl(\mcal G)$ denote the closure of $\mcal G$ under composition.

\begin{definition}
$\mcal G$ is \tb{ISCT} if every \tb{idempotent} $G:f\to f$ in $\cl(\mcal G)$, i.e., $G^2=G$, contains a strict arc of the form $\arcd{x}{x}$ for some $x\in\var(f)$. 
\end{definition}

\subsection{P\'{e}ter-Ackermann}\label{Section:Ackermann}\label{genP-A} 

As Ben-Amram shows in \cite{BenAmram}, the  P\'{e}ter-Ackermann function is ISCT. 
\[
\begin{split}
A(x,y) = &\mbb{ if } x=0 \mbb{ then } y+1\\
&\mbb{ else if } y=0 \mbb{ then } \tau_0: A(x-1,1)\\
&\qquad \mbb{ else } \tau_1: A(x-1, \tau_2: A(x,y-1))\\
\end{split}
\]
Note that we have three calls $\tau_i$ ($i<3$) which are safely described by the following size-change graphs:
	\begin{center}
		
		\begin{minipage}[c]{0.3\textwidth}
			\[\begin{tikzpicture}[xscale=1,yscale=1]
			
			\node (x1) at (0, 1) {$x$};
			\node (y1) at (0, 0) {$y$};
			\node (x2) at (2, 1) {$x$};
			\node (y2) at (2, 0) {$y$};
			\path (x1) edge [->]node [auto] {${\downarrow} $} (x2);
			\node (G) at (1,-1) {$G_{1}:A\to A$};
			\end{tikzpicture}
			\]
			
		\end{minipage}
		\begin{minipage}[c]{0.3\textwidth}
			\[\begin{tikzpicture}[xscale=1,yscale=1]
			
			\node (x1) at (0, 1) {$x$};
			\node (y1) at (0, 0) {$y$};
			\node (x2) at (2, 1) {$x$};
			\node (y2) at (2, 0) {$y$};
			\path (x1) edge [->]node [auto] {${\Downarrow} $} (x2);
			\path (y1) edge [->]node [auto] {${\downarrow} $} (y2);
			\node (H) at (1,-1) {$G_2:A\to A$};
			\end{tikzpicture}
			\]
		\end{minipage}
	\end{center}
	
	The size-change graph $G_{1}$ safely describes both calls  $\tau_0 : A(x-1,1)$ and $\tau_1: A(x-1,A(x,y-1))$. In particular, notice that in the call $\tau_1$ the parameter value $x$ decreases no matter what the value of the expression $A(x,y-1)$ is. Finally, the size-change graph $G_2$ safely describes the call $\tau_2: A(x,y-1)$. \smallskip

Actually, we can prove that P\'{e}ter-Ackermann is MSCT within $\RCAo$. 

\begin{lemma}[$\RCAo$]\label{lemma: AckSCT}
	$A$ is MSCT.
\end{lemma}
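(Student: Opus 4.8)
The plan is to unwind the definition of MSCT directly for this very simple description and reduce it to a single case distinction. Recall that the description of $A$ consists of only two size-change graphs, both from $A$ to $A$: the graph $G_1$ whose only arc is $\arcd{x}{x}$, and the graph $G_2$ with arcs $\arce{x}{x}$ and $\arcd{y}{y}$. So I would fix an arbitrary infinite multipath $M = H_0, H_1, H_2, \ldots$, noting that each $H_i$ must be either $G_1$ or $G_2$. The key structural observation is that both $G_1$ and $G_2$ carry an arc $x \to x$ (strict in $G_1$, non-strict in $G_2$), while only $G_2$ carries the strict arc $\arcd{y}{y}$.

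With this in hand, I would split into two cases depending on whether $G_1$ occurs infinitely often along $M$. If it does, the constant thread $x_i = x$ is an infinite descent: the arc $x \to x$ is present at every step (both graphs have it), and it is strict precisely at the infinitely many positions where $H_i = G_1$. If instead $G_1$ occurs only finitely often, fix $N$ with $H_i = G_2$ for all $i \geq N$; then the thread $x_i = y$ for $i \geq N$ is an infinite descent, since $\arcd{y}{y} \in G_2$ makes every step strict from $N$ on. In either case the witnessing thread is exhibited explicitly, so the existential quantifier in the definition of infinite descent is discharged directly.

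The remaining work is to check that this reasoning goes through in $\RCAo$, and here I expect no serious obstacle. The case distinction ``does $G_1$ occur infinitely often'' is simply an instance of the law of excluded middle on a $\Pi^0_2$ formula, so it is available without any induction. In each branch the descent is a constant (or eventually constant) sequence of parameters, hence $\Delta^0_1$-definable, so forming it as a set requires only recursive comprehension. The only genuinely substantive point --- and the one I would state carefully --- is the shared-arc observation that $x \to x$ lies in both $G_1$ and $G_2$, which is what keeps the $x$-thread alive through the non-strict steps; everything else is bookkeeping against the definitions of multipath and infinite descent.
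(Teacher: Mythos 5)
Your proof is correct and is essentially the paper's own argument, spelled out in more detail: the paper also splits on whether the multipath contains infinitely many $G_1$ (descent in $x$) or cofinitely many $G_2$ (descent in $y$), and your shared-arc observation plus the $\RCAo$ bookkeeping are exactly the details the paper leaves implicit.
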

\begin{proof}
 Every multipath contains either infinitely many $G_1$ or cofinitely many $G_2$. In the first case we have an infinite descent in the parameter $x$ and in the second case we have an infinite descent in the parameter $y$.
\end{proof}

These remarks highlight that both ISCT Soundness and MSCT Soundness are not provable over $\RCA$. 

\section{The standard proof requires $\ACA$}

We discuss the standard proof of SCT soundness. This section is self-contained.

\begin{theorem}[Lee, Jones, Ben-Amram]
	If $P$ is MSCT then $P$ is terminating.
\end{theorem}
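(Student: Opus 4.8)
The plan is to argue by contradiction, following the classical size-change argument while making explicit where set comprehension is used. Fix a safe description $\mcal G$ of $P$ that is MSCT, and suppose toward a contradiction that $P$ is non-terminating, so that for some input $\mb u$ the unique reduction sequence starting from $f_0(\mb u)$ is infinite. Call a state $(f,\mb w)$ \emph{divergent} if there is no $v$ with $f(\mb w)\ev v$; thus $(f_0,\mb u)$ is divergent, and ``$(f,\mb w)$ is divergent'' is a $\Pi^0_1$ condition.

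The heart of the argument is a one-step lemma: if $(f,\mb w)$ is divergent, then there are a call $\tau\colon f\to g$ and values $\mb v$ with $(f,\mb w)\xrightarrow{\tau}(g,\mb v)$ and $(g,\mb v)$ divergent. To prove it, note first that $t=e^f(\mb w)$ is divergent, since $f(\mb w)\to_P t$ and reduction is deterministic (Lemma~\ref{lemma: unicita}). As $t$ is a finite term, choose a subterm $s$ of $t$ that is divergent but minimal as such in the subterm ordering, so that every proper subterm of $s$ reduces to a value. The head of $s$ can be neither a value (which evaluates to itself) nor a primitive operator (an operator applied to the values of its arguments reduces in finitely many steps to a value), so $s=g(\mb s)$ with $g\in\fun$ and $s_i\to_P^* v_i$ for each $i$. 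By the correspondence between subterms of $e^f(\mb w)$ and positions in $e^f$, there is a call $\tau\colon f\to g$ with $e^f|_\tau(\mb w)=g(\mb s)$; hence $(f,\mb w)\xrightarrow{\tau}(g,\mb v)$, and $(g,\mb v)$ is divergent because $g(\mb s)\to_P^* g(\mb v)$.

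Next I would iterate this lemma to produce an infinite divergent call sequence $(f_0,\mb u)=(h_0,\mb u_0)\xrightarrow{\tau_0}(h_1,\mb u_1)\xrightarrow{\tau_1}\cdots$ with every $(h_i,\mb u_i)$ divergent. This is the step that uses $\ACA$: by arithmetical comprehension I would first form the set $N$ of divergent states (a $\Pi^0_1$ set), and then build the sequence recursively, at each stage letting $(\tau_i,h_{i+1},\mb u_{i+1})$ be the least witness to the one-step lemma that additionally lands in $N$. Having $N$ as an actual object makes the selection predicate arithmetical, so the recursion is legitimate and no external choice principle is invoked. This is precisely where the standard proof overshoots, since $\ACA$ is far stronger than the true strength of the statement.

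Finally I would feed the call sequence into the combinatorics. Setting $G_i=G_{\tau_i}$ yields an infinite multipath $M=G_0,G_1,\dots$, since $G_{\tau_i}\colon h_i\to h_{i+1}$. As $\mcal G$ is MSCT, $M$ contains an infinite descent $x_t\to x_{t+1}\to\cdots$ with $x_i\in\var(h_i)$, $x_i\to x_{i+1}\in G_i$ for all $i\ge t$, and $\arcd{x_i}{x_{i+1}}\in G_i$ for infinitely many $i$. Since $\mcal G$ is safe, $\xrightarrow{\tau_i}\subseteq\xrightarrow{G_{\tau_i}}$, so each transition $(h_i,\mb u_i)\xrightarrow{\tau_i}(h_{i+1},\mb u_{i+1})$ respects the constraints of $G_i$; writing $a_i$ for the value of the parameter $x_i$ in $\mb u_i$, we obtain $a_i\ge a_{i+1}$ for all $i\ge t$ and $a_i>a_{i+1}$ infinitely often. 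This is an infinite descending sequence of natural numbers, which is impossible (provably already in $\RCA$), yielding the desired contradiction. The only genuinely delicate points are the leftmost-innermost analysis in the one-step lemma and, above all, the comprehension needed to realize the infinite call sequence as an object; everything else is routine.
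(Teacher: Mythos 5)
Your proof is correct, and its overall skeleton is the paper's: extract an infinite state transition sequence from non-termination, pass to the corresponding multipath, invoke MSCT to get an infinite descent, and use safety to turn it into an infinite descending sequence of natural numbers. Where you genuinely diverge is in how the infinite call sequence is obtained. The paper realizes this step through the activation tree $T_P^{\mb u}$: it proves in $\RCA$ that non-termination makes the tree infinite (Proposition \ref{termination tree}) and then applies K\"{o}nig's lemma in $\ACA$ to get an infinite branch (Proposition \ref{ACA}). You instead maintain a \emph{divergence invariant}: a one-step lemma saying every divergent state has a call to a divergent state (proved by taking a minimal divergent subterm of $e^f(\mb w)$ --- essentially the same subterm analysis the paper performs inside its proof of Proposition \ref{termination tree}), and then an iteration that uses arithmetical comprehension to form the $\Pi^0_1$ set of divergent states so that the recursive selection of least witnesses is legitimate. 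Both routes use exactly $\ACA$, but they buy different things: your construction is more direct, avoids K\"{o}nig's lemma entirely, and makes transparent that what is really needed is the ability to decide divergence (one jump); the paper's tree formulation, on the other hand, isolates the use of $\ACA$ in a single clean combinatorial principle and is what makes the subsequent reversal --- the paper's proof that this intermediate step actually \emph{implies} $\ACA$ --- natural to state and prove. One small point worth making explicit in your write-up: non-termination gives an infinite reduction sequence (rather than a stuck term) because every term that is not a numeral contains an innermost non-value subterm, which is a redex; and in the one-step lemma the claim that a primitive-operator head would converge relies on the leftmost (deterministic) reduction order resolving the arguments first. Neither is a gap --- the paper glosses the same points --- but they are the only places where your sketch leans on unstated facts.
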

\begin{proof}[Proof sketch]
	Let $\mcal G$ be a safe description of a program $P$ and suppose that $P$ does not terminate on $\mathbf{u}$. Then there exists an infinite sequence of state transitions $(f,\mathbf{u})\xrightarrow{\tau_0}(f_1,\mathbf{u_1})\xrightarrow{\tau_1}(f_2,\mathbf{u_2})\ldots$. Consider the corresponding multipath in $\mathcal{G}$. As $\mcal G$ is MSCT, there exists an infinite descent. By safety, we have an infinite descending sequence of natural numbers. A contradiction. 
\end{proof}

We will show that this proof requires $\ACA$. The key step in the proof is the existence of an infinite state transition sequence, i.e.,  the existence of an infinite branch in the so called activation tree.

\begin{definition}[activation tree]\label{activation}
	Let $P$ be a program with entry function $f$. Given $\mathbf{u}\in\NN^{\arity(f)}$, the \tb{activation tree} $T_P^{\mathbf{u}}$ consists of all finite state transition sequences starting with $(f,\mb u)$, i.e., sequences of the form:
	\[  (f,\mathbf{u})\xrightarrow{\tau_0}(f_1,\mathbf{u}_1)\xrightarrow{\tau_2}\ldots \xrightarrow{\tau_{k-1}}(f_k,\mathbf{u}_k).\]
\end{definition}

The activation tree is $\Sigma^0_1$ and finitely branching. If $P$ is MSCT, then every branch of the tree is finite (since we are considering values in $\NN$, they cannot decrease infinitely many times). Therefore, we have:

\begin{proposition}[$\RCAo$]
	If $P$ is MSCT, then $T_P^{\mathbf{u}}$ has no infinite branches for all $\mathbf{u}$.
\end{proposition}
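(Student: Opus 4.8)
The plan is to argue by contradiction within $\RCAo$. Assume we are given a safe description $\mcal G$ of $P$ which is MSCT, and suppose that $T_P^{\mathbf u}$ has an infinite branch for some $\mathbf u$. Such a branch is (the code of) an infinite state transition sequence
\[
(f,\mathbf u)\xrightarrow{\tau_0}(f_1,\mathbf u_1)\xrightarrow{\tau_1}(f_2,\mathbf u_2)\xrightarrow{\tau_2}\cdots,
\]
where $f$ is the entry function. Since the branch is given as a set, all the data $\tau_i,f_i,\mathbf u_i$ can be read off from it, and I would first form the associated multipath $M=G_{\tau_0},G_{\tau_1},\ldots$ by $\Delta^0_1$-comprehension: here I use that $\mcal G$ assigns a fixed size-change graph $G_\tau$ to each of the finitely many calls $\tau$ of $P$, so the map $i\mapsto G_{\tau_i}$ is computable from the branch. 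By construction $G_{\tau_i}\colon f_i\to f_{i+1}$, so $M$ is a genuine infinite multipath.

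Next I would instantiate the MSCT hypothesis at $M$ to obtain an infinite descent $x_t\to x_{t+1}\to\cdots$ with $x_i\in\var(f_i)$, $x_i\to x_{i+1}\in G_{\tau_i}$ for all $i\ge t$, and $\arcd{x_i}{x_{i+1}}\in G_{\tau_i}$ for infinitely many $i$. I would then track the \emph{actual} parameter values along this thread: let $v_i$ be the value, in the state $\mathbf u_i$, of the parameter $x_i$ of $f_i$, so that $(v_i)_{i\ge t}$ exists by $\Delta^0_1$-comprehension. This is where safety enters: since $\mcal G$ is safe, each transition $(f_i,\mathbf u_i)\xrightarrow{\tau_i}(f_{i+1},\mathbf u_{i+1})$ is also a $\xrightarrow{G_{\tau_i}}$-transition, hence $\mathbf u_i,\mathbf u_{i+1}$ satisfy the constraints of $G_{\tau_i}$. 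In particular $x_i\to x_{i+1}\in G_{\tau_i}$ forces $v_i\ge v_{i+1}$, and $\arcd{x_i}{x_{i+1}}\in G_{\tau_i}$ forces $v_i>v_{i+1}$.

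The remaining step is to contradict the existence of a sequence $(v_i)_{i\ge t}$ of natural numbers that is weakly decreasing and strictly decreasing for infinitely many $i$; this is the only place an induction axiom of $\RCAo$ is used. I would let $c(n)$ count the strict decreases below $n$, i.e.\ the number of $i$ with $t\le i<n$ and $v_i>v_{i+1}$, a $\Delta^0_1$ quantity in the given data. A routine induction on $n$ (available in $\RCAo$ via $\Sigma^0_1$-induction) gives $v_t\ge v_n+c(n)$ for all $n\ge t$, whence $c(n)\le v_t$ for every $n$. On the other hand, the set of strict-decrease indices is infinite, so $c$ is unbounded, and any $n$ with $c(n)>v_t$ yields the contradiction.

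There is no deep obstacle here; the content is the observation, already flagged in the excerpt, that the natural numbers are well-ordered. The points requiring care are purely the reverse-mathematical bookkeeping: confirming that the multipath $M$, the value sequence $(v_i)$, and the counting function $c$ are all $\Delta^0_1$-definable from the assumed branch (this rests on the decidability of $\to_P$ and of the transition and arc relations, together with the finiteness of the set of calls), and checking that the final ``no infinite descent in $\N$'' argument stays within the $\Sigma^0_1$-induction of $\RCAo$. The role of the two hypotheses remains transparent: MSCT supplies the combinatorial descent, and safety converts it into a descent of genuine natural-number values.
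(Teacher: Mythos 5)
Your proposal is correct and follows exactly the argument the paper intends: the paper's proof is left as ``from the definitions,'' with the surrounding text indicating precisely your route (infinite branch $\Rightarrow$ multipath $\Rightarrow$ MSCT descent $\Rightarrow$ via safety, an infinitely often strictly decreasing sequence in $\N$, which is impossible). You have simply filled in the reverse-mathematical bookkeeping—$\Delta^0_1$-definability of the multipath and value sequence, and the $\Sigma^0_1$-induction counting argument—that the authors leave to the reader.
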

\begin{proof}
	From the definitions. The reader is encouraged to fill in the details. 
\end{proof}

One can show that $P$ terminates iff $T_P^{\mb u}$ is finite for all $\mb u\in\NN^{\arity(f)}$. With some effort, this can be done in $\RCA$. 

\begin{proposition}[$\RCAo$]\label{termination tree}
 $P$ terminates on $\mathbf{u}$ iff  $T_P^{\mathbf{u}}$ is finite. 
\end{proposition}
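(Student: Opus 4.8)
The plan is to reduce the whole equivalence to a single \emph{local} characterization of termination in terms of the immediate successors (children) of a state in the activation tree, and then to propagate it up and down the tree by inductions that stay within $\is{1}$. The central step, which I would establish first, is the following Key Lemma: \emph{for every state $(g,\mathbf{w})$ we have $g(\mathbf{w})\ev$ if and only if $h(\mathbf{v})\ev$ for every child $(h,\mathbf{v})$ of $(g,\mathbf{w})$}, i.e. for every state transition $(g,\mathbf{w})\xrightarrow{\tau}(h,\mathbf{v})$. Here the children of $(g,\mathbf{w})$ are exactly the function-call subterms $h(\mathbf{s})$ of $e^g(\mathbf{w})$, at every nesting level inside that term, whose arguments all evaluate, $s_i\to^*_P v_i$. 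For the ``if'' direction I would fix, using the hypothesis, a terminating reduction for each of the finitely many children, and then build a terminating reduction of $e^g(\mathbf{w})$ by recursion on its subterm structure: every subterm reduces leftmost-innermost, and whenever the process reaches a term $h(\mathbf{v})$ with $h\in\fun$ and $\mathbf{v}$ a tuple of numbers, that term is a child and I splice in its supplied reduction. Since the child reductions are provided as witnesses, the statement proved by induction on the size of the subterm is the purely $\Sigma^0_1$ assertion ``this subterm evaluates'', so $\is{1}$ suffices. For the ``only if'' direction I would, given a terminating reduction $R$ of $g(\mathbf{w})$, locate the occurrence of $h(\mathbf{s})$ in $e^g(\mathbf{w})$ and extract from $R$ the contiguous block that reduces $h(\mathbf{v})$ to a number, using determinism (Lemma~\ref{lemma: unicita}).

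Granting the Key Lemma, the direction ``$T_P^{\mathbf{u}}$ finite $\Rightarrow$ $P$ terminates on $\mathbf{u}$'' follows by induction over the finite tree from the leaves upward. As $T_P^{\mathbf{u}}$ is finite it has a maximal depth $D$, and I would prove by induction on $n\le D$ the $\Sigma^0_1$ statement ``every node of depth $\ge D-n$ terminates'' (a bounded conjunction of $\Sigma^0_1$ formulas, since the tree is a given finite set). A node of maximal depth is a leaf with no children, so the Key Lemma with empty child set yields its termination vacuously; the inductive step applies the ``if'' direction to a node whose children, being deeper, terminate by the induction hypothesis. At $n=D$ the root $(f,\mathbf{u})$ terminates, that is $f(\mathbf{u})\ev$.

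For the converse, assume $f(\mathbf{u})\ev v$ and fix the finite reduction sequence $R$ witnessing it, of length $\ell$ and with largest number $M$ occurring in it. The ``only if'' half of the Key Lemma propagates termination from a node to its children, so every node of $T_P^{\mathbf{u}}$ terminates; moreover the sub-reduction extracted for a child $(h,\mathbf{v})$ of $(g,\mathbf{w})$ is a proper sub-reduction of the one for $(g,\mathbf{w})$, as it omits at least the initial step $g(\mathbf{w})\to_P e^g(\mathbf{w})$, so the reduction length strictly decreases along every edge. Hence every branch has length at most $\ell$. Since the branching is bounded by the fixed number $B$ of calls of $P$ and the state data along a branch are bounded by $\max(M,B)$, the tree is included in a bounded set of sequences; bounded $\Sigma^0_1$-comprehension, available in $\RCAo$, then produces $T_P^{\mathbf{u}}$ as a coded finite set.

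The main obstacle is twofold. First, the ``only if'' half of the Key Lemma rests on a standardization-type fact, namely that under the deterministic leftmost reduction the computation of a needed subterm appears as a contiguous block of the global reduction; this must be proved carefully by induction on the length of $R$ rather than taken for granted. Second, and pervasively, one must keep every induction within $\is{1}$: formulating the Key Lemma through the $\Pi^0_1$ predicate ``does not terminate'' would force $\Sigma^0_2$-induction, so the whole argument must instead be organized around explicit reduction witnesses, so that each inductive statement remains $\Sigma^0_1$, and the passage from ``bounded depth and finite branching'' to an actual finite set must go through bounded $\Sigma^0_1$-comprehension.
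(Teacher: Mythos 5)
Your proposal is correct and follows essentially the same route as the paper: the ``if'' half of your Key Lemma is the contrapositive of the paper's central fact that a non-terminating state has a non-terminating child with evaluated arguments (proved there via a least non-evaluating subterm), combined with the same leaves-upward induction over the finite tree; and your converse direction---extracting, by determinism (Lemma~\ref{lemma: unicita}), a strictly shorter sub-reduction for each child so as to bound depth and state data and then apply bounded $\Sigma^0_1$-comprehension---is a witness-explicit rendering of the paper's argument that every state in $T_P^{\mathbf{u}}$ arises from a subterm of the given terminating reduction. Only your side remark that the $\Pi^0_1$ formulation ``would force $\Sigma^0_2$-induction'' is off: the paper works with the predicate ``does not terminate'' and stays within $\RCAo$ by using the $\Pi^0_1$ least-number principle, which is equivalent to $\Sigma^0_1$-induction.
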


\begin{proof}
	Let  $T=T_P^{\mathbf{u}}$. Suppose first that $T$ is finite. By bounded $\Sigma^0_1$-comprehension the tree $T$ exists. By $\Sigma^0_1$-induction on $T$ we show that for all $\sigma\in T$, if $\last(\sigma)=(g,\mb v)$, then $g(\mb v)\ev$. Write $t\nev$ if there is no natural number $v$ such that $t\ev v$. Note that if $t\nev$ then there exists a subterm  $h(\mb{s})$, with $h\in\fun$, and $\mb{w}\in\N^n$, such that $\mb{s}\ev\mb{w}$ and $h(\mb{w})\nev$.  This can be proved by $\Pi^0_1$-induction on a given term $t$ such that $t\nev$. In fact, the least subterm $s$ of $t$ (in any linear ordering of subterms of $t$ which respects the subterm relation), such that $s\nev$, is as desired. Suppose $\sigma$ is an end-node. If $g(\mb v)\nev$ then $e^g(\mb v)\nev$, so there exists a subterm $h(\mb s)$ as above. Then $(g,\mb v)\xrightarrow{\tau}(h,\mb w)$ for some $\tau$, so $\sigma$ is not an end-node. The case when $\sigma$ is not an end-node can be proved similarly. 
	
	In the other direction, suppose that $P$ terminates on $\mb u$ with witness $f(\mb u)=t_0\to_P t_1\to_P\ldots\to_Pt_{l}=v\in\N$. Let $\mcal T$ be the set of all subterms appearing in the reduction sequence. We claim that if $g(\mb v)\in\mcal T$ and $(g,\mb v)\xrightarrow{\tau} (h, \mb w)$, then also $h(\mb w)\in\mcal T$. Let $g(\mb v)$, with $e^g(\mb v)|_\tau=h(\mb t)$ and $\mb t_i \to^*_P \mb w_i$, be given. Take the maximum $i\leq l$ such that $g(\mb v)$ appears in $t_i$. Then $t_{i+1}$ is obtained by reducing $g(\mb v)$, 
	so $h(\mb t)$ is a subterm of $t_{i+1}$. Since the program terminates, there exists a tuple of natural numbers $\mb n$ such that $\mb t_i \to^*_P \mb n_i$. By Lemma \ref{lemma: unicita}, we have $\mb w = \mb n$. Since, moreover, the reduction is deterministic, $h(\mb w)$ must be a subterm of $t_j$ for some $i<j\leq l$, hence $h(\mb w)\in\mcal T$. This proves the claim. Now, given $\sigma\in T$, one can show by induction that every initial segment of $\sigma$ consists of pairs $(g,\mb v)$ with $g(\mb v)\in \mcal T$. It easily follows that $T$ is finite. 
\end{proof}


\begin{proposition}[$\ACA$]\label{ACA}
	If $P$ does not terminate on $\mathbf{u}$, then $T_P^{\mathbf{u}}$ has an infinite branch. 
\end{proposition}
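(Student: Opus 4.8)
The plan is to recognise Proposition~\ref{ACA} as an instance of König's lemma for finitely branching trees, which is equivalent to $\ACA$ over $\RCA$, and to carry out the standard argument. First I would record that $T_P^{\mathbf u}$ exists as a set: the relation ``$\sigma\in T_P^{\mathbf u}$'' is $\Sigma^0_1$ (a sequence of states lies in the tree iff there are reduction witnesses for each of its state transitions, and $\xrightarrow{\tau}$ is itself $\Sigma^0_1$), so arithmetical comprehension produces the set $T=T_P^{\mathbf u}$. Since $P$ does not terminate on $\mathbf u$, Proposition~\ref{termination tree} gives that $T$ is infinite.

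The key structural fact is that $T$ is finitely branching, with a uniform bound. If $\sigma\in T$ ends in a state $(f,\mathbf u)$, then each immediate successor of $\sigma$ arises from a call $\tau\colon f\to g$ via $(f,\mathbf u)\xrightarrow{\tau}(g,\mathbf v)$, and for a fixed $\tau$ the target $\mathbf v$ is unique because reduction is deterministic (Lemma~\ref{lemma: unicita}). As observed earlier, there are only finitely many calls $\tau$ in $e^f$; hence $\sigma$ has at most $N$ successors in $T$, where $N$ is the maximal number of calls occurring in a body $e^f$ for $f\in\fun(P)$. I would then run König's lemma inside $\ACA$. Form the set $S=\{\sigma\in T:\forall m\,\exists\rho\in T\,(\sigma\subseteq\rho\wedge \lh(\rho)\geq m)\}$ of nodes with arbitrarily long extensions; $S$ is arithmetical (indeed $\Pi^0_2$) in the set parameter $T$, so $S$ exists by arithmetical comprehension. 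Infinitude of $T$ puts the root in $S$. Moreover every $\sigma\in S$ has a successor in $S$: listing its finitely many successors $\sigma_1,\dots,\sigma_k$ with $k\leq N$, if each $\sigma_j\notin S$ then each $\sigma_j$ has a finite bound $m_j$ on the lengths of its extensions in $T$, and the maximum of these finitely many bounds would bound the extensions of $\sigma$ itself, contradicting $\sigma\in S$. Choosing at each stage the leftmost successor lying in $S$ (say, least under the fixed coding of states) defines, by arithmetical recursion, an infinite branch of $T$, as required.

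The main obstacle is not hidden combinatorics but pinpointing exactly where the non-computable step lives, so that the proof genuinely sits at the $\ACA$ level: both the formation of $T$ from its $\Sigma^0_1$ definition and the formation of $S$ from the $\Pi^0_2$ predicate ``has arbitrarily long extensions'' require arithmetical comprehension, and it is $S$ that makes a path extractable. This is the precise point at which the standard soundness argument departs from $\RCA$, matching the claim of this section that the proof needs $\ACA$.
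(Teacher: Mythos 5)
Your proof is correct and follows essentially the same route as the paper: invoke Proposition~\ref{termination tree} to see that $T_P^{\mathbf u}$ is infinite, note that the ($\Sigma^0_1$-definable, finitely branching) tree exists by arithmetical comprehension, and apply K\"{o}nig's lemma, which is provable in $\ACA$. The only difference is that you inline the standard $\ACA$ proof of K\"{o}nig's lemma (via the set of nodes with arbitrarily long extensions) and verify finite branching explicitly, whereas the paper simply cites K\"{o}nig's lemma as available in $\ACA$.
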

\begin{proof}
Suppose that $P$ does not terminate on $\mathbf{u}$. Then $T=T_P^{\mathbf{u}}$ is infinite by Proposition \ref{termination tree}. Note that the tree $T$ exists within $\ACA$. Since $T$ is finitely branching, it has an infinite branch by K\"{o}nig's lemma (which is provable in $\ACA$). 
\end{proof}

\begin{proposition}[$\RCA$]
	The statement ``If $P$ does not terminate on $\mathbf{u}$, then $T_P^{\mathbf{u}}$ has an infinite branch'' implies $\ACA$.
\end{proposition}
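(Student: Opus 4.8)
The plan is to derive from the displayed implication the principle that \emph{the range of every injection $f\colon\N\to\N$ exists}, which is equivalent to $\ACA$ over $\RCA$ (Simpson, SOSOA). So I reason in $\RCA$, assume the implication, fix an arbitrary injection $f$, and build a program $P_f$ whose forced non-termination produces an infinite branch from which $\rng f$ can be read off. Morally this is the standard reversal showing that K\"onig's lemma for finitely branching trees implies $\ACA$; the only new ingredient is to realize the relevant tree as an activation tree.

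For the construction, treat $f$ as a primitive operator and let $U$ be a primitive operator with interpretation
\[
\interp{U}(n,s)=\text{(code of) } \{\,j\le n : j\in s \ \text{ or }\ \exists m\le n\ f(m)=j\,\}.
\]
Since $\interp U$ is computed from $f$ by bounded search, $\RCA$ proves it total. Let $P_f$ consist of the single entry function $R$ of arity $2$, with parameters $x,y$ and body $R(x+1,U(x,y))$, and take input $\mathbf u=(0,c_\emptyset)$, where $c_\emptyset$ codes $\emptyset$. The only function identifier is $R$ and its body contains a single call $\tau\colon R\to R$ whose argument tuple always evaluates to a pair of numbers; since $U\in\op$ contributes no state transitions, the only state transitions are $(R,(n,s))\xrightarrow{\tau}(R,(n+1,\interp U(n,s)))$. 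Hence $T_{P_f}^{\mathbf u}$ is \emph{linear}: it has exactly one node of each length, namely the states $(R,(0,s_0)),\dots,(R,(n,s_n))$ with $s_0=\emptyset$ and $s_{n+1}=\interp U(n,s_n)$. In particular $T_{P_f}^{\mathbf u}$ is infinite, so by Proposition~\ref{termination tree} the program $P_f$ does not terminate on $\mathbf u$, and the hypothesis applies.

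Applying the hypothesis yields an infinite branch $B$; as the tree is linear, $B$ is the whole tree, so from $B$ we decode, uniformly and $\Delta^0_1$ in $B$, the sequence $\langle s_n\rangle_{n\in\N}$. By determinism of reduction (Lemma~\ref{lemma: unicita}) the recurrence $s_0=\emptyset$, $s_{n+1}=\interp U(n,s_n)$ is provable from $B$ and $f$ in $\RCA$. From it one proves by $\Sigma^0_1$-induction that the $s_n$ are increasing; that if $f(m)=i$ then $i\in s_n$ for all $n>\max(i,m)$; and that no $j\notin\rng f$ ever enters any $s_n$. Consequently, in $\RCA$,
\[
 i\in\rng f \;\leftrightarrow\; \exists n\,(i\in s_n),\qquad i\notin\rng f \;\leftrightarrow\; \forall n\,(i\notin s_n),
\]
so $\rng f$ has both a $\Sigma^0_1(B)$ and a $\Pi^0_1(B)$ definition and therefore exists by $\Delta^0_1$-comprehension. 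As $f$ was an arbitrary injection, this establishes $\ACA$.

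The main obstacle is the last step: $\RCA$ has only $\Delta^0_1$-comprehension, and $\rng f$ is intrinsically $\Sigma^0_1$-complete in $f$, so the entire gain must come from the branch $B$. This is exactly why the states must carry the monotone ``discovered-so-far'' sets $s_n$ rather than a single number: it is the presence of the whole sequence $\langle s_n\rangle$ inside $B$ that supplies the complementary $\Pi^0_1(B)$ definition and thus the $\Delta^0_1(B)$ one. A secondary point to handle carefully is that $P_f$ \emph{provably} fails to terminate and that the infinite branch is essentially forced; both are secured by making the activation tree linear, so that the branch returned by the hypothesis is necessarily the intended computation and its states obey the recurrence by determinism of reduction.
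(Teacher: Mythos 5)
Your argument breaks at the last step, and the break is irreparable within your construction. The two displayed equivalences are negations of one another: together they say exactly that $\rng f$ is $\Sigma^0_1(B\oplus f)$, equivalently that its \emph{complement} is $\Pi^0_1(B\oplus f)$. That is one definition, not two. To invoke $\Delta^0_1$-comprehension you would need $\rng f$ \emph{itself} to have a $\Pi^0_1(B\oplus f)$ definition, and your branch supplies none: knowing the entire monotone enumeration $\langle s_n\rangle_{n\in\N}$ never tells you by which stage a number that will eventually appear has already appeared, and that modulus is precisely the non-computable content of $\rng f$. The obstruction is structural, not a matter of bookkeeping. Your activation tree is linear, every node is extendible, and its unique branch $B$ is obtained by the primitive recursion $s_0=\emptyset$, $s_{n+1}=\interp{U}(n,s_n)$, which $\RCA$ proves total relative to $f$; hence $B$ exists provably in $\RCA$ and the hypothesis of the proposition is never genuinely used. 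If your final step were sound, $\RCA$ alone would prove that $\rng f$ exists, i.e.\ $\RCA\vdash\ACA$, which is false. (This also shows your construction differs essentially from the standard reversal of K\"onig's lemma that you invoke: there the tree consists of \emph{guesses} about the range, only the truthful guesses survive forever, and so the branch carries information not computable from $f$. Your tree involves no guessing at all.)

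The paper's proof is designed exactly around this obstruction. It works with the $f$-true stages ($x$ is true if $f(x)<f(y)$ for all $y>x$), a $\Pi^0_1(f)$-definable set from which $\rng f$ is $\Delta^0_1$-definable provably in $\RCA$. The program $g(x,t,y,z)$, defined as ``if $f(z)<f(y)$ then $z$ else $g(x+1,\bot,y,g(x+1,\top,x+1,x+2))$'', uses nested recursion to search for a witness that $y$ is not true; one shows it diverges on $(0,\top,0,1)$, and that along any infinite branch of the activation tree the state $(x,t_x,y_x,z_x)$ at level $x$ has $t_x=\top$ exactly when $x$ is a true stage. Thus the set of true stages, which need not exist in $\RCA$, becomes $\Delta^0_0$-definable from the branch, and the range follows. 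The feature your proposal lacks is that the flags $t_x$ encode contingent $\Pi^0_1(f)$ facts whose correctness is certified by the very existence of the infinite branch; to fix your proof you would have to rebuild the program so that its branch forces such non-computable commitments, at which point you have essentially reconstructed the paper's argument.
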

\begin{proof}
	We argue in $\RCA$. By \cite[Lemma III.1.3]{SOSOA}, it is enough to show that the range of any one-to-one function exists. Let $f\colon\N\to\N$ be given. Say that $x\in\N$ is an $f$-true stage (or simply true) if $f(x)<f(y)$ for all $x<y$. It is well-known that, provably in $\RCA$, we can define the range of $f$ from any infinite set of $f$-true stages (see, e.g, \cite{Fri16}). We show that the set of true stages exists.  We may safely assume that $0$ is true.

	Define $P$ as follows:
	\[\begin{split}
	g(x,t,y,z)= &  \mboxb{ if }  f(z)<f(y) \mboxb{ then } z \\
	& \mboxb{ else } g(x+1,\bot,y,g(x+1,\top, x+1,x+2)) \\
	\end{split} \]
	The idea is that $g(x,t,y,z)$ tests the truth of $y$ by seeking the least  $v\geq z$ such that  $f(v)<f(y)$.

\begin{claim}
If $y\leq x<z$ and $g(x,t,y,z)=v$ then $z\leq v$ and $f(v)<f(y)$.
\end{claim}
\begin{proof}
By $\Pi^0_1$-induction on the length of a reduction sequence.	
\end{proof}

	By the claim, $P$ does not terminate on $\mb u= (0,\top,0,1)$. Let $(g,\mb u)\to (g,\mb u_1)\to\ldots (g,\mb u_x)\to $ be an infinite branch in $T_P^{\mb u}$, where $\mb u_x=(x,t_x,y_x,z_x)$. By  $\Sigma^0_0$-induction it is easy to show that $y_x\leq y_{x+1}$ and $y_x\leq x<z_x$ for all $x$. 
	
We now show that for all $x\in\N$,  
\[ x \text{ is true if and only if }  t_x=\top, \]   
so we can define the set of true stages by $\Delta^0_1$-comprehension (indeed $\Delta^0_0$). 

Suppose $x$ is true and $t_x=\bot$. Then  $\mb u_x=(x,\bot, y,g(x,\top,x,x+1))$ and by the claim above we have that $f(z_x)<f(x)$ with $x<z_x$, so $x$ is not true, a contradiction. 

Suppose that $t_x=\top$ and $x$ is false. We have that $y_x=x$.  Let $v>x$ be least such that $f(v+1)< f(x)$. Consider $\mb u_v=(v,t,y,z)$. Now $\mb u_v=(v,\top, v, v+1)$ or $\mb u_v=(v,\bot, y, v+1)$ with $x\leq y\leq v$. By the minimality of $v$ we have $f(z)<f(y)$ in both cases. Thus there is no state transition from $\mb u_v$, a contradiction. 
\end{proof}

\begin{remark}
	Within our syntax, primitive operators do not appear in boolean expressions. We can modify $P$ as follows:
	
	\[\begin{split}
	g(x,t,y,z)&=h(f(y),f(z),z,g(x+1,\bot,y,g(x+1,\top,x+1,x+2))) \\
	h(a,b,c,d)&= \mboxb{ if }  b < a \mboxb{ then } c \mboxb{ else } d
	\end{split}\]
	This program computes the same function. Observe that this program does not have a safe SCT description.
\end{remark}


\section{Lower bound}

In this section we show that ISCT soundness implies $\WO(\omega_3)$ over $\RCA$. To this end, we  consider the (relativized) fast growing hierarchy. 

\subsection{Fast Growing Hierarchy} 
We formalise $\varepsilon_0$ in $\RCAo$ as in Definition 2.3 from \cite{simpsonord}:
\begin{definition}
The set $\mathcal{E}$ of notations of ordinals $<\varepsilon_0$ and order $<$ on $\mathcal{E}$ is taken as follows:
\begin{enumerate}
  \item If $\alpha_0 \geq \dots \geq \alpha_n \in \mathcal{E}$, then $\omega^{\alpha_0} + \dots + \omega^{\alpha_n} \in \mathcal{E}$.
  \item $\omega^{\alpha_0} + \dots + \omega^{\alpha_n} < \omega^{\beta_0} + \dots + \omega^{\beta_m}$ if and only if:
  \begin{enumerate}
    \item $n<m$ and  $\alpha_i=\beta_i$ for all $i \leq n$, or:
    \item there is $i\leq \min\{ n,m\}$ with $\alpha_j=\beta_j$ for all $j < i$ and $\alpha_i < \beta_i$.
  \end{enumerate}
\end{enumerate}
We use $0$ to denote the empty sum, $0 < \alpha$ for all $\alpha \neq 0$, $1=\omega^0$, $n=\overbrace{1 + \dots +1}^{n}$, $\omega=\omega^1$, $\omega_0(\alpha)=\alpha$, $\omega_{d+1}(\alpha)=\omega^{\omega_d(\alpha)}$ and $\omega_d=\omega_d(1)$. 
\end{definition}

\begin{remark}
To show that $\mathcal{E}$ is well defined in $\RCAo$, one need only observe that the corresponding characteristic function is primitive recursive.
\end{remark}

\begin{definition}[fundamental sequence]
For $\alpha = \omega^{\alpha_0} + \dots + \omega^{\alpha_n} \in \mathcal{E}$ and $x \in \N$, take $0[x]=0$, $(\alpha+1)[x]=\alpha$, and for $\alpha$ limit ordinal:
\begin{enumerate}
  \item If $\alpha_n=\beta+1$, then $\alpha[x] = \omega^{\alpha_0} + \dots + \omega^{\alpha_{n-1}} + \omega^\beta \cdot x$,
  \item If $\alpha_n$ is a limit, then $\alpha[x] = \omega^{\alpha_0} + \dots + \omega^{\alpha_{n}[x]}$.
\end{enumerate}
\end{definition}

For well-ordered $\alpha<\varepsilon_0$, the fast growing hierarchy relative to $f:\NN\to\N$ would be defined as follows:
\begin{align*}
F_{0,f}(x) &= f(x),  \\
F_{\alpha+1,f}(x) & = F_{\alpha,f}^{(x+1)}(1),  \\
F_{\lambda,f}(x)  & =F_{\lambda[x],f}(x)  \text{ if $\gamma$ is limit},
\end{align*}

where $F^{(n)}$ is the $n$-times iterate of a function  $F$, defined by $F^{(0)}(x)=x$ and $F^{(n+1)}(x)=F(F^{(n)}(x))$. 

\begin{remark}
In the usual definition of the fast growing hierarchy we have $F_{\alpha+1}(x)=F_\alpha^{(x+1)}(x)$. For our purposes we use this slightly modified version (see Proposition \ref{FA}). 
\end{remark}

\begin{remark}
The primitive recursive functions are exactly the functions elementary recursive in some $F_n$ with $n<\omega$. Let $\mc F_\alpha$ be the set of functions elementary recursive in $F_\alpha$.  The multiply recursive functions (functions defined by transfinite recursion on $\omega^n$ for some $n\in\NN$)  are exactly the functions in $\bigcup_{\alpha<\omega^\omega} \mc F_\alpha$. Ben-Amram proved that SCT programs compute exactly the multiply recursive functions. 
\end{remark}

We follow standard practice  in defining the fast growing hierarchy in terms of its canonical computation. Define the following function $K_f\colon (\varepsilon_0)^* \times \N \rightarrow (\varepsilon_0)^* \times \N$. Intuitively, this function represents one step in the computation of $F_{\alpha,f}(x)$. Let
\[
K_f( \alpha_0 \dots \alpha_n,  x  )= \left\{ 
\begin{array}{ll}
(\alpha_0 \dots \alpha_{n-1}, f(x)) 													& \textrm{if $\alpha_n=0$} \\
(\alpha_0 \dots \alpha_{n-1}\overbrace{\beta \dots \beta}^{\textrm{$x+1$ times}}, 1)	& \textrm{if $\alpha_n=\beta+1$} \\
(\alpha_0 \dots \alpha_{n-1} \alpha_n [x], x)											& \textrm{if $\alpha_n$ is a limit} \\
\end{array}
\right.
\]
and $K_f( \langle \rangle , x) =(\langle \rangle, x)$. Notice that $(\alpha_0 \dots \alpha_n, x)$ simply represents the term \[F_{\alpha_0} ( \dots ( F_{\alpha_n} (x) \dots ). \] $F_f$ is the result of repeated applications of the `computation steps' (when it exists).
\begin{definition} 
Let	$F_{\alpha,f} (x) =\mu y. \exists l \ K_f^{(l)} (\alpha , x)= ( \langle \rangle , y )$. We call the sequence $\{K_f^{(i)} (\alpha , x)\}_{i\in \N}$  the \tb{computation} of $F_{\alpha,f}(x)$. Say that the computation is finite if there exists $l$ such that $K_f^{(l)} (\alpha , x)= ( \langle \rangle , y )$. 
\end{definition}
One can show that this definition is equivalent to usual $\Delta^0_1$ definitions as in, e.g. \cite{HP93} (adapted to take into account the different initial function and slightly different conditions).

\begin{proposition}[$\RCAo$]\label{wo tot}
	For all $\alpha<\varepsilon_0$, \[\WO(\omega^\alpha)\biimp (\forall f:\NN\to\N)\Tot(F_{\alpha,f}). \]
\end{proposition}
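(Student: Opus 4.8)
The plan is to prove the two implications by rather different means. The direction $\WO(\omega^\alpha)\Rightarrow(\forall f)\,\Tot(F_{\alpha,f})$ I would obtain from a direct ``no infinite descent'' argument applied to the computation, whereas for the converse I would argue by contraposition, turning an infinite descending sequence in $\omega^\alpha$ into an oracle for which some computation never halts. Throughout, the point is that $K_f$ and the ordinal arithmetic on $\mathcal E$ are primitive recursive, so that the combinatorial bookkeeping stays inside $\RCAo$.

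For the forward direction, assign to each configuration $\alpha_0\dots\alpha_n$ occurring in the computation the ordinal $\mu(\alpha_0\dots\alpha_n)=\omega^{\alpha_0}\oplus\cdots\oplus\omega^{\alpha_n}$, where $\oplus$ is the natural (Hessenberg) sum. The key observation is that every non-terminal application of $K_f$ strictly decreases $\mu$: in the base case $\alpha_n=0$ the summand $\omega^0=1$ is deleted; in the successor case $\alpha_n=\beta+1$ the summand $\omega^{\beta+1}=\omega^\beta\cdot\omega$ is replaced by $\omega^\beta\cdot(x+1)<\omega^{\beta+1}$; and in the limit case $\omega^{\alpha_n}$ is replaced by $\omega^{\alpha_n[x]}$ with $\alpha_n[x]<\alpha_n$. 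Since $\oplus$ is monotone, in each case $\mu$ drops. Now the initial configuration $\alpha$ has measure $\omega^\alpha$, and since $\omega^\alpha$ is additively indecomposable (hence closed under $\oplus$), every later measure is a notation strictly below $\omega^\alpha$. Thus, if the computation of $F_{\alpha,f}(x)$ were infinite, the associated measure sequence $\mu_1>\mu_2>\cdots$, which is recursive in $f$ and so exists by $\Delta^0_1$-comprehension, would be an infinite descending sequence in $\omega^\alpha$, contradicting $\WO(\omega^\alpha)$. Hence every computation is finite and $F_{\alpha,f}$ is total. This uses only that the per-step decrease is a $\Delta^0_0$ fact verified by the three cases, plus $\Sigma^0_1$-bookkeeping, all available in $\RCAo$.

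For the converse I would assume $\neg\WO(\omega^\alpha)$, fix an infinite descending sequence $\gamma_0>\gamma_1>\cdots$ below $\omega^\alpha$, and construct an oracle $f$ (recursive in this sequence) together with an input $x_0$ for which the computation from $(\alpha,x_0)$ never reaches $(\langle\rangle,y)$. Since an infinite computation is the same thing as an infinite descending measure sequence in $\omega^\alpha$, the task is to \emph{realize} such a sequence by legal $K_f$-steps. The only genuine freedom lies in the values returned at base events, and hence in the copy-counts $f(x)+1$ produced at subsequent successor steps and in the fundamental-sequence choices; the idea is to use $(\gamma_i)$ to choose these oracle values large enough that the configuration is continually replenished and the measure, rather than reaching $0$, keeps pace with the $\gamma_i$, so that base events occur infinitely often.

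The main obstacle is exactly this backward construction. Unlike the forward argument, where the decrease is forced by the rules, here one must show that an externally given, a priori unrelated descending sequence below $\omega^\alpha$ can be faithfully interpolated by a deterministic oracle computation. A tempting shortcut is to make the \emph{rightmost} ordinal descend forever, but this would require $\neg\WO(\alpha)$, and $\WO(\alpha)\Rightarrow\WO(\omega^\alpha)$ is not available in $\RCAo$ (already for $\alpha=\omega$, where $\WO(\omega)$ is trivial while $\WO(\omega^\omega)$ is not); indeed, the descent of any single ordinal lives in $\alpha$ and is therefore well-founded whenever $\WO(\alpha)$ holds. This is precisely why the statement is about $\omega^\alpha$: the divergence must be witnessed by the full configuration measure, not by any individual ordinal, and the delicate point is to close the construction so that each block of the descent genuinely forces a further base event while remaining a legal $K_f$-computation, with $f$ being $\Delta^0_1$ in the descending sequence so that the whole argument goes through in $\RCAo$.
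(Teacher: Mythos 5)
Your forward direction is essentially the paper's proof: assign to each configuration the (natural) sum of the $\omega^{\alpha_i}$, observe that every $K_f$-step strictly decreases this measure and that it stays below the additively indecomposable $\omega^\alpha$, so an infinite computation would yield an infinite descent, contradicting $\WO(\omega^\alpha)$. That part is fine and needs no further comment.

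The converse direction, however, has a genuine gap, and you have named it yourself: you never close the ``backward construction.'' Your plan is to take the descending sequence $\gamma_0>\gamma_1>\cdots$ below $\omega^\alpha$ and \emph{realize} it, i.e.\ interpolate it by the measure sequence of a legal deterministic $K_f$-computation. That is the wrong target, and it is why you got stuck: the computation's measure sequence is completely determined by $\alpha$, the input, and $f$, so there is no mechanism for forcing it to track an externally given sequence. The paper's resolution is to prove something weaker and one-sided: termwise \emph{domination} instead of interpolation. Define the maximal coefficient $\mathrm{mc}(\beta)$ of a notation $\beta$ (the largest coefficient appearing hereditarily in its Cantor normal form), and use the key lemma: if $\gamma>\beta$ and $\gamma$ is a limit, then $\gamma[\mathrm{mc}(\beta)+1]>\beta$. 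Then choose $f$ strictly increasing with $f(x)>\mathrm{mc}(\gamma_{x+1})+x+1$; this $f$ is $\Delta^0_1$ in the given sequence, so it exists in $\RCAo$. One now shows, by induction along a primitive-recursively defined sequence of stages $a_0<a_1<\cdots$ (stepping past the blocks created by successor steps), that the measure at stage $a_i$ exceeds $\gamma_i$ while the numeric argument at stage $a_i$ is at least $f(i)$. The point of the lemma is that whenever the computation performs a limit step, the argument $x$ is already so large that $\lambda[x]$ cannot drop below the next $\gamma_i$; the descending sequence is never ``caught,'' so the measure never reaches $0$ and the computation of $F_{\alpha,f}(f(0))$ diverges. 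Without the maximal-coefficient device (or an equivalent quantitative bound on how far fundamental sequences can drop), your construction cannot be completed, since, as you note, nothing else ties the oracle's growth to the given descent.
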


\begin{proof} Take $h(\alpha_0 \dots \alpha_n, x)=\omega^{\alpha_0} + \dots + \omega^{\alpha_n}$ and $h(\langle \rangle , x)=0$. Note that the computation of $F_{\alpha,f}(x)$ is finite iff $\beta_i=0$ for some $i$, where $\beta_i=h(K_f^{(i)} (\alpha , x))$. Note also that $\beta_i>\beta_{i+1}$ as long as $\beta_i>0$.

First suppose that $\omega^\alpha$ is well-founded. Then the sequence $\{ h( K_f^{(i)} (\alpha , x))\}_{i \in\N}$ reaches zero, so the computation is finite. 

For the other direction, we assume that $\omega^\alpha$ is not well-founded and prove that $F_{\alpha,f}$ is not total. First, a definition.

\begin{definition}[Maximal coefficient]
	By primitive recursion on the construction of $\alpha<\varepsilon_0$, define $\mathrm{mc}(\alpha)\in\NN$ as follows. Let
	$\mathrm{mc}(0)=0$.  If $\alpha=\omega^{\alpha_0}\cdot a_0 + \dots + \omega^{\alpha_n}\cdot a_n$, where  $\alpha_0 > \dots > \alpha_n$ and  $a_i >0$, then 
	\[
	\mathrm{mc}(\alpha)=\max\{ \mathrm{mc}(\alpha_i), a_i \}.
	\]
\end{definition}

Given an infinite descending sequence \[ \omega^\alpha>\alpha_0>\alpha_1> \alpha_2> \dots, \]
take $f(x)>\mathrm{mc}(\alpha_{x+1})+x+1$ and strictly increasing. Assume, for a contradiction, that that the computation of $F_{\alpha,f}(f(0))$ is finite. 

To ease notation, take
\[
(\sigma_i,x_i)=K^{(i)}_f(\alpha, f(0)) \text{ and } \beta_i=h(\sigma_i,x_i).
\] 
We aim to show that $\beta_i>0$ for all $i$, in contradiction with the finiteness of the computation of $F_{\alpha,f}(f(0))$.  Note that $\beta_0=\omega^\alpha$ and $x_0=f(0)$. One can show that:
\begin{enumerate}[(1)]
	\item if $\gamma > \beta$ is a limit, then $\gamma[\mathrm{mc}(\beta)+1] > \beta$;
	\item $F_{\beta,f} (y) \geq f(y)$, hence $F_{\beta,f}^{(y)} (1)> y$ for all $\beta\leq\alpha$, $y$ which occur in the computation of $F_{\alpha,f}(f(0))$;
	\item  if $l$ is the smallest such that $K_f^{(l)} (\beta , y)= ( \langle \rangle , z )$, then $K_f^{(l)} (\sigma\beta , y)= ( \sigma , z )$.
\end{enumerate}

By primitive recursion let us define an increasing sequence $(a_i)$ of natural numbers as follows.
Set $a_0=0$. If  $\sigma_{a_i}$ ends with a zero or $\sigma_{a_i}=\langle \rangle$, let $a_{i+1}=a_i+1$. Otherwise,  let $a\geq a_i$ be the smallest such that $\sigma_a$ ends with a successor $\beta+1$, and set $a_{i+1}=a+l+1$, where $l$ is the least such that \[ K_f^{(l)}(\overbrace{\beta \dots \beta}^{i+1} , 1)=(\langle\rangle, z). \]

\begin{claim} For every $i>0$ we have:
\[
\beta_{a_i} > \alpha_i
\]
and
\[
x_{a_i} \geq f(i).
\]
\end{claim}
From the claim, it follows that $\beta_i>0$ since $\beta_i\geq\beta_{a_i}>\alpha_i>0$, as desired. 

\noindent\emph{Proof of the claim}. Induction on $i$. For $i=0$, the claim follows directly. For the induction step, assume that the claim is true for $i$.

Case 1. $a_{i+1}=a_i+1$. Since $\beta_i > 0$, the inequalities follow directly from the definitions:
\[
\beta_{a_i+1}=\beta_{a_i}-1\geq \alpha_{i}>\alpha_{i+1}
\]
and 
\[
x_{a_i+1}= f(x_{a_i}) \geq f(f(i)) \geq f(i+1).
\]

Case 2. Let $a$ and $\beta$ be those from the definition of $a_{i+1}$. By the definition of $a$,  $\sigma_j$ ends with a limit for  all $j \in [a_i, a)$. Therefore, by the induction hypothesis and notice (1), $\beta_a > \alpha_{i}$ and $x_a \geq f(i)$.  Since $\sigma_a$ is of the form  $\gamma_0 \dots \gamma_l \beta+1$, $\sigma_{a+1}$ has the form:
\[
\gamma_1 \dots \gamma_l \overbrace{\beta \dots \beta}^{\geq \mathrm{mc}(\alpha_{i+1})+1} \overbrace{\beta \dots \beta}^{i+1},
\]
so $\beta_{a_{i+1}}\geq \beta_{a}[\mathrm{mc}(\alpha_{i+1})+1] > \alpha_{i+1}$ by notice (1) and (3). By notice (2) and (3),  $x_{a_{i+1}} \geq F_{\beta,f}^{(i+1)}(1) \geq f(i+1)$. This ends the proof of the claim.
\end{proof}

\subsection{Generalizing P\'{e}ter-Ackermann}

Recall that for  $f\colon\N\to\N$ and $\alpha<\varepsilon_0$  we have:

\begin{align*}
F_{0,f}(x) & = f(x)  \\
F_{\alpha+1,f}(x) & = F_{\alpha,f}^{(x+1)}(1) \\
F_{\lambda,f}(x) &=F_{\lambda[x],f}(x)
\end{align*}
Note that $F_{x,f}(y)=A_f(x,y)$, where $A_f$ is the P\'{e}ter-Ackermann function relativized to $f$ (see \cite{KK}). We now generalize $A_f(x,y)$ as follows.  For $n>0$ and $f\colon\N\to\N$, let
\[\begin{split}
A^{n}_f(x_1,x_2,\ldots,x_n,y) = &\mboxb{ if } x_1=\ldots=x_{n}=0 \mboxb{ then } f(y)\\
&\mboxb{ else if } x_1>0 \land x_2=\ldots=x_{n}=0  \mboxb{ then } \\
& \ \ \ \ \tau_1: A^{n}_f(x_1-1,y,x_3,\ldots,x_n,y)  \\
& \ \  \vdots \\
& \mboxb{ else if } x_{i}>0 \land x_{i+1}=\ldots=x_n=0  \mboxb{ then }  \\ 
& \ \ \ \ \tau_i: A^{n}_f(x_1\ldots,x_{i}-1,y,x_{i+2},\ldots,x_{n},y) \\
& \ \  \vdots \\
&\mboxb{ else if } x_{n-1}>0 \land x_n=0 \mboxb{ then } \\
& \ \ \ \ \tau_{n-1}: A^{n}_f(x_1,\ldots,x_{n-1}-1,y,y) \\
&\mboxb{ else if } x_n>0 \land y=0 \mboxb{ then } \\
& \ \ \ \ \tau_n: A^{n}_f(x_1,x_2,\ldots,x_{n}-1,1) \\
& \mboxb{ else } \\
& \ \ \ \ \tau_0: A^{n}_f(x_1,\ldots,x_{n-1},x_n-1,\tau_{n+1}: A^{n}_f(x_1,\ldots,x_{n},y-1)) \\
\end{split}
\]

In the interest of readability, let $\mb x=x_1,\ldots,x_n$ and  $\alpha(\mb x)=\omega^{n-1}x_{1}+\ldots+x_n$. For $x_n>0$, let  $\mb x-1=x_1,\ldots,x_{n-1},x_n-1$ and observe that $\alpha(\mb x-1)=\alpha(\mb x)-1$. Then: 

\[ A^n_f(\mb x, y)=\begin{cases} f(y) & \text{ if } \alpha(\mb x)=0 \\
            A^n_f(\mb x', y) & \text { if } \alpha(\mb x) \text{ is limit and } \alpha(\mb x')=\alpha(\mb x)[y] \\
            A^n_f(\mb x-1,1) & \text{ if } \alpha(\mb x) \text{ is successor and } y=0 \\
            A^n_f(\mb x-1, A^n_f(\mb x,y-1)) & \text{ if } \alpha(\mb x)   \text{ is successor and }  y>0    

\end{cases}\]

We now show the relationship between the fast growing hierarchy and the generalized  P\'{e}ter-Ackermann function.

\begin{proposition}[$\RCA$]\label{FA}
	For all $n>0$ and $f\colon\N\to\N$,
	\[        A^{n}_f(\mb x,y)=F_{\alpha(\mb x),f}(y).  \]
	That is, for all $\mathbf{x},y,z$, $ A^{n}_f(\mb x,y)= z$  iff there exists $l$ such that $K_f^{(l)}(\alpha(\mb x),y)=(\langle\rangle,z)$.
\end{proposition}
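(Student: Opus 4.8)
The plan is to show that the term-rewriting computation of $A^n_f(\mb x,y)$ is, step for step, the canonical computation of $F_{\alpha(\mb x),f}(y)$ recorded by $K_f$, so that the two processes halt together and with the same value.

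First I would set up the recursion dictionary between the tuple $\mb x$ and the notation $\alpha(\mb x)$. Reading the displayed four-case formula for $A^n_f$, one checks that $\alpha(\mb x)=0$ exactly when all $x_i=0$ (the base clause, returning $f(y)$), that $\alpha(\mb x)$ is a successor exactly when $x_n>0$ (the clauses $\tau_n$ and $\tau_0$), and that $\alpha(\mb x)$ is a limit exactly when $x_n=0$ but some earlier coordinate is positive (the clauses $\tau_1,\dots,\tau_{n-1}$). The one point that requires a computation is the limit clause: if $x_i$ is the last nonzero coordinate ($i\le n-1$) and $\mb x'$ is the argument tuple of $\tau_i$, then $\alpha(\mb x')=\alpha(\mb x)[y]$. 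This is immediate from the fundamental-sequence definition, since the least exponent of $\alpha(\mb x)$ is $n-i\ge 1$, a successor, so $\alpha(\mb x)[y]$ replaces the final summand $\omega^{n-i}$ by $\omega^{\,n-i-1}\cdot y$, which is exactly the effect of putting $y$ into position $i+1$ of $\mb x'$. With this dictionary the four clauses of $A^n_f$ read off as $F_{0,f}(y)=f(y)$, $F_{\lambda,f}(y)=F_{\lambda[y],f}(y)$, $F_{\beta+1,f}(0)=F_{\beta,f}(1)$ and $F_{\beta+1,f}(y)=F_{\beta,f}(F_{\beta+1,f}(y-1))$, i.e.\ the defining equations of the fast growing hierarchy.

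Next I would make the two computations concrete. Since reduction is leftmost and a call is reducible only when its arguments are numbers, every term occurring in the reduction of $A^n_f(\mb x,y)$ is a right-nested spine $A^n_f(\mb v_0,A^n_f(\mb v_1,\dots,A^n_f(\mb v_{k-1},w)\dots))$ with $w\in\N$ (momentarily carrying an innermost $f(w)$). I map such a spine to the $K_f$-state $(\alpha(\mb v_0)\dots\alpha(\mb v_{k-1}),w)$ and the bare number $w$ to $(\langle\rangle,w)$; under this decoding the spine represents $F_{\alpha(\mb v_0)}(\dots F_{\alpha(\mb v_{k-1})}(w)\dots)$. Comparing one innermost reduction step against one application of $K_f$, the base, limit and successor-with-$w=0$ cases agree exactly, the base case matching the two rewrite steps $A^n_f(\dots)\to_P f(w)\to_P w$ against the single $K_f$ step.

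The main obstacle is the remaining case: a successor last entry $\beta+1$ with positive value $w>0$. Here the two processes genuinely diverge on intermediate states. In one step $K_f$ expands the block eagerly, replacing the trailing $\beta+1$ by $w+1$ copies of $\beta$ and resetting the value to $1$, whereas the reduction of $A^n_f$ unfolds lazily, inserting a single copy of $\beta$ ahead of the trailing $\beta+1$ and decrementing the value, so that it passes through decoded states with $j$ leading-inserted copies of $\beta$ before a trailing $\beta+1$ and value $w-j$. The key computation is that after exactly $w+1$ steps the lazy process reconverges to the same decoded state, namely $w+1$ copies of $\beta$ with value $1$, that the single eager $K_f$ step produces; the intermediate states lie off the $K_f$ trajectory and are simply bypassed. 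Consequently the $K_f$-states of $F_{\alpha(\mb x),f}(y)$ are exactly the checkpoint decodings visited, in order, by the reduction of $A^n_f(\mb x,y)$.

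Finally I would derive both directions of the biconditional. Given a halting computation on either side, a finite object, I obtain the matching computation on the other side by the explicit, uniformly primitive-recursive refinement (splitting each successor $K_f$ step into its $w+1$ rewrite steps) or coarsening (deleting the off-trajectory intermediate states), and then verify by $\Sigma^0_0$-induction along this finite sequence that it is a legitimate computation ending, respectively, in the number $z$ or in $(\langle\rangle,z)$. Handling the supplied finite computations in this way keeps the whole argument inside $\RCA$, using only bounded induction and never induction on the (a priori $\Pi^0_2$) assertion that $A^n_f$ and $F$ agree. This yields $A^n_f(\mb x,y)\ev z \biimp \exists l\,K_f^{(l)}(\alpha(\mb x),y)=(\langle\rangle,z)$, which is the claim.
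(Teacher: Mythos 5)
Your proof is correct, and it takes a genuinely different route from the paper's. The paper proves the two directions separately, each by $\Pi^0_1$-induction on the length $l$ of the given computation, with the inductive statement quantified over all inputs $\mb x,y,z$: in the critical case $x_n,y>0$ the rewriting computation is decomposed at the nested call into a computation of $A^n_f(\mb x,y-1)=y'$ followed by one of $A^n_f(\mb x-1,y')=z$, the induction hypothesis is applied to these shorter sub-computations, and the pieces are reassembled on the $K_f$ side using the invariance fact that $(\tau,x)\to^{(l)}(\rho,y)$ iff $(\sigma\tau,x)\to^{(l)}(\sigma\rho,y)$; since step counts do not match in the direction from $K_f$ to rewriting, the paper there inducts on a weakened statement carrying an $l^2$ bound on the length of the rewriting. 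You instead prove a global simulation (a bisimulation up to stuttering): every term of the deterministic leftmost reduction is a right-nested spine, spines decode to $K_f$-states, and the $K_f$-trajectory is exactly the subsequence of checkpoint decodings, with a successor step at value $w>0$ expanding into exactly $w+1$ rewrite steps and a base step into exactly two. This buys an exact step-for-step accounting (sharper than the paper's $l^2$ slack), a symmetric treatment of the two directions by refinement/coarsening of a single given finite computation, and a verification that uses only bounded induction along that finite object rather than $\Pi^0_1$-induction over all inputs (though both are available in $\RCA$). The price is the structural lemma that all intermediate terms are spines, which is specific to this program and to leftmost (innermost) reduction, whereas the paper's compositional induction never needs to analyze the global shape of intermediate terms. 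Both arguments formalize in $\RCA$ without difficulty.
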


\begin{proof}
	
	Write $A$ for $A^n_f$ and $\alpha$ for $\alpha(\mb x)$. Say that $A(\mb x,y)=z$ in $l$-many steps if there exists a reduction sequence $A(\mb x,y)=t_0\to t_1\ldots \to t_l=z$. We also write $A(\mb x,y)\to^{(l)} z$.  On the other hand,  $K_f^{(l)}(\alpha,y)=(\langle\rangle, z)$ iff there exists a sequence  $(\alpha,y)=k_0\to k_1\to\ldots\to k_l=(\langle\rangle, z)$ where $k_{i+1}=K_f(k_i)$. 
	We also write $k_0\to^{(l)} k_l$.  We shall use the fact that $(\tau,x)\to^{(l)}(\rho, y)$ iff  $(\sigma\tau,x)\to^{(l)}(\sigma\rho,y)$ for all $\sigma$. 
	
	In one direction, we prove by $\Pi^0_1$-induction on $l$ that, for all $\mathbf{x},y,z$, if $A(\mathbf{x},y)= z$ in $l$-many steps then $(\alpha,y)\to^{(l)}(\langle\rangle, z)$. This is a relatively straightforward, if tedious, verification. Let us consider the case $x_n,y>0$. The other cases are similar and actually simpler.  Since $x_n>0$ we have that  $\alpha=\alpha(\mb x)$ is a successor. Note that $\alpha-1=\alpha(\mb x-1)$, where $\mb x-1=x_1,\ldots,x_{n-1},x_n-1$.  Let
	\[  A(\mathbf{x},y)\to A(\mb x-1, A(\mb x ,y-1))\to^{(l_0)} A(\mb x-1, y')\to^{(l_1)} z \]
	with $l=l_0+l_1+1$.  Then   $A(\mb x ,y-1)= y'$ in  $l_0$-many steps  and  $A(\mb x-1,y')= z$ in $l_1$-many steps.  By the induction hypothesis,  $(\alpha,y-1)\to^{(l_0)}(\langle\rangle,y')$ and
		 $(\alpha-1,y')\to^{(l_1)}(\langle\rangle, z)$.  Since $(\alpha,y-1)\to((\alpha-1)^{(y)},1)$, it follows that $((\alpha-1)^{(y)},1)\to^{(l_0)}(\langle\rangle,y')$. 
	Therefore we have:
		\[ 
		(\alpha,y)\to(\alpha-1(\alpha-1)^{(y)},1)\to^{(l_0)} (\alpha-1,y')\to^{(l_1)} (\langle\rangle,z)
		\]
		with $l_0+l_1+1=l$. 
		
\smallskip 

For the other direction, we show by $\Pi^0_1$-induction on $l$ that,  for all $\mathbf{x},y,z$, if $(\alpha,y)\to^{(l)}(\langle\rangle, z)$, then  $A(\mb x, y)=z$ in less than $l^2$-many steps. The $l^2$ bound is not optimal but does the job.  Once again, consider the case $x_n,y>0$ so that $\alpha=\alpha(\mb x)$ is a successor and $\alpha-1=\alpha(\mb x-1)$ with $\mb x-1=x_1,\ldots,x_{n-1},x_n-1$.  Suppose that 
\[ (\alpha,y)\to((\alpha-1)^{(y+1)},1)\to^{(l_0)} (\alpha-1,y')\to^{(l_1)} (\langle\rangle,z). \]
Then $l=l_0+l_1+1$. As before, note that $(\alpha,y-1)\to((\alpha-1)^{(y)},1)$. By induction, $A(\mb x,y-1)=y'$ within $\leq (l_0+1)^2$-many steps and  $A(\mb x-1,y')=z$ within $\leq l_1^2$-many steps, where $\mb x-1=x_1,\ldots,x_{n-1},x_n-1$. Therefore we have a reduction sequence 
\[   A(\mb x,y)\to A(\mb x',A(\mb x,y-1))\to\ldots\to A(\mb x',y')\to\ldots\to z \]
of length $\leq (l_0+1)^2+l_1^2+1\leq l^2$. Note in fact that $l_i>0$. 
\end{proof}

\begin{corollary}[$\RCA$]\label{wo tot A} The following holds:
	\begin{itemize}
		\item $\WO(\omega^{\omega^\omega})\biimp (\forall n>0)(\forall f:\NN\to\NN) \Tot(A^n_f)$
		\item $\WO(\omega^{\omega})\biimp (\forall f:\NN\to\N)\Tot(A^{2}_f)$
	\end{itemize}
\end{corollary}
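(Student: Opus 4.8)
The plan is to chain the two results already in hand. Proposition~\ref{FA} identifies the generalized P\'{e}ter-Ackermann function with an initial segment of the fast growing hierarchy, and Proposition~\ref{wo tot} converts totality of each $F_{\alpha,f}$ into a well-ordering statement; a short \emph{continuity lemma} then bundles the resulting family of well-ordering statements into a single one. All three ingredients are available in $\RCA$, and since every link in the chain is a biconditional, both directions of each displayed equivalence are obtained simultaneously.

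First I would pin down the range of $\mb x\mapsto\alpha(\mb x)=\omega^{n-1}x_1+\dots+x_n$: as $\mb x$ runs over $\N^n$, the value $\alpha(\mb x)$ runs over exactly the notations $\beta<\omega^n$, since every such $\beta$ has a unique Cantor normal form with all exponents $<n$. By Proposition~\ref{FA} we have $A^n_f(\mb x,y)=F_{\alpha(\mb x),f}(y)$, so for each fixed $n>0$ the statement $(\forall f)\,\Tot(A^n_f)$ says precisely that $F_{\beta,f}$ is total for every $f$ and every $\beta<\omega^n$. Swapping the (purely universal) quantifiers and applying Proposition~\ref{wo tot} to each $\beta$ turns this into
\[
(\forall f)\,\Tot(A^n_f)\ \biimp\ (\forall\beta<\omega^n)\,\WO(\omega^\beta).
\]
For the first bullet I also quantify over $n$: since $\bigcup_{n>0}[0,\omega^n)=[0,\omega^\omega)$ in the notation system, this yields $(\forall n{>}0)(\forall f)\,\Tot(A^n_f)\biimp(\forall\beta<\omega^\omega)\WO(\omega^\beta)$, and the continuity lemma at the limit $\omega^\omega$ collapses the right-hand side to $\WO(\omega^{\omega^\omega})$. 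The second bullet is the bottom case of the same computation, namely the relativized P\'{e}ter-Ackermann function, where the ordinal parameter ranges over $\omega$; there the continuity lemma at the limit $\omega$ collapses $(\forall\beta<\omega)\WO(\omega^\beta)$ to $\WO(\omega^\omega)$.

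The only real content beyond bookkeeping is the continuity lemma: for a limit notation $\lambda$,
\[
\RCA\ \vdash\ \WO(\omega^\lambda)\ \biimp\ (\forall\beta<\lambda)\,\WO(\omega^\beta).
\]
The forward direction is easy, because for $\beta<\lambda$ the field of $\omega^\beta$ is an initial segment of the field of $\omega^\lambda$ and any suborder of a well-order is well-ordered. The reverse direction is where I expect the main (if modest) obstacle: from an infinite descending sequence in $\omega^\lambda$ I would take its first term $\gamma_0$, read off from its Cantor normal form the leading exponent $\delta<\lambda$, put $\beta=\delta+1<\lambda$ (using that $\lambda$ is a limit), and note that the entire sequence lies below $\gamma_0<\omega^\beta$, contradicting $\WO(\omega^\beta)$. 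Carrying this out in $\RCA$ requires only that the passage from $\gamma_0$ to $\beta$ be primitive recursive in the notation, which it is, together with the routine extraction of the descending sequence. The quantifier swap $(\forall f)(\forall\beta)\leftrightarrow(\forall\beta)(\forall f)$ and the computation of the range of $\alpha(\mb x)$ are immediate.
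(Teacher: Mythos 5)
Your strategy coincides with the paper's: its entire proof of this corollary is ``this follows from Proposition~\ref{wo tot} and Proposition~\ref{FA}'', and what you add --- the computation of the range of $\mb x\mapsto\alpha(\mb x)$, the quantifier bookkeeping, and the continuity lemma $\WO(\omega^\lambda)\biimp(\forall\beta<\lambda)\WO(\omega^\beta)$ at limit notations --- is exactly what that one-liner leaves implicit. Your proof of the continuity lemma is correct and carries out in $\RCA$ (the leading exponent of the first term of a putative descending sequence, plus one, bounds the whole sequence below some $\omega^\beta$ with $\beta<\lambda$), so the first bullet is fully established by your argument.

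Your handling of the second bullet, however, contains a genuine inconsistency. Your first step (correctly) identifies the range of $\alpha(\mb x)$ for $A^n_f$ as the notations $\beta<\omega^n$; for $n=2$ that range is $\omega^2$, not $\omega$. Running your own chain at $n=2$ therefore yields
\[
(\forall f)\,\Tot(A^2_f)\ \biimp\ (\forall\beta<\omega^2)\,\WO(\omega^\beta)\ \biimp\ \WO(\omega^{\omega^2}),
\]
and nothing in your argument identifies $\WO(\omega^{\omega^2})$ with $\WO(\omega^\omega)$; indeed these are not equivalent over $\RCA$ (passing from $\WO(\omega^\omega)$ to $\WO(\omega^{\omega\cdot a})$ for arbitrary, possibly nonstandard, $a$ would require induction on a $\Pi^1_1$ formula, and the hierarchy is strict). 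So your assertion that for the second bullet ``the ordinal parameter ranges over $\omega$'' contradicts your own step 1. That assertion is the right one for the \emph{two-argument} relativized P\'eter--Ackermann function, i.e.\ for $A^1_f$ in the indexing of Proposition~\ref{FA}, and that is evidently the function the bullet intends: it is the result of \cite{KK} quoted in the introduction, where the superscript counts arguments rather than giving the exponent of the ordinal range. This off-by-one originates in the paper itself, but a correct write-up must commit to one convention --- either prove the bullet for $A^1_f$, or record that for the literal $A^2_f$ of Proposition~\ref{FA} the statement becomes $\WO(\omega^{\omega^2})\biimp(\forall f)\Tot(A^2_f)$ --- rather than silently using both readings.
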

\begin{proof}
	This follows from Proposition \ref{wo tot} and Proposition \ref{FA}.
\end{proof}

\subsection{From soundness to $\WO(\omega_3)$} 
We can now give the desired lower bounds. 

\begin{definition}[description $\mathcal{A}_n$ of $A^n_f$] It is convenient to define $\mcal A_n$ on parameters $x_1,\ldots,x_n,x_{n+1}$. That is, we write $x_{n+1}$ for $y$.  Define $\mathcal{A}_n=\{A_1,\ldots,A_{n+1}\}$ as follows.  For every $0<j\leq n+1$, let $A_j$ be the size-change graph with arcs $\arcd{x_j}{x_j}$ and $\arce{x_i}{x_i}$ for all $0<i<j$. 
\end{definition}

Note that $\mathcal{A}_n$ does not depend on $f$. 

\begin{proposition}[$\RCA$]
For all $n\in\N$ and $f\colon \N\to\N$, $\mathcal{A}_n$ is a safe ISCT description of $A^n_f$. More precisely, $A_i$ is a safe description of $\tau_i$ for all $0<i\leq n+1$, and $A_n$ is a safe description of $\tau_0$. 
\end{proposition}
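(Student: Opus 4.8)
The plan is to prove the two assertions separately: safety of each assigned graph, and the idempotent condition on the closure $\cl(\mcal A_n)$. For safety I would exploit the fact that a state transition $(A^n_f,\mathbf u)\xrightarrow{\tau}(A^n_f,\mathbf v)$ fires only when the subexpression at position $\tau$ survives the branch selection on $\mathbf u$; consequently the boolean guard leading to $\tau$ holds of $\mathbf u$, and in particular the coordinate that $\tau$ decrements is strictly positive. Since safety means $\xrightarrow{\tau}\,\subseteq\,\xrightarrow{G}$ for the graph $G$ assigned to $\tau$, it then suffices to check that every constraint recorded by $G$ is met by all such $\mathbf u,\mathbf v$.

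The verification is driven by the shape of the graphs: $A_j$ records a strict self-loop $\arcd{x_j}{x_j}$, non-strict self-loops $\arce{x_k}{x_k}$ for $k<j$, and \emph{no} arc at any position $>j$. For a call $\tau_i$ with $0<i\le n-1$ the active guard gives $x_i>0$, the arguments $x_1,\dots,x_{i-1}$ are copied verbatim, and the $i$-th becomes $x_i-1$; hence $v_k=u_k$ for $k<i$ (the non-strict loops) and $u_i>u_i-1=v_i$ (the strict loop), while the coordinate at position $i+1$ (which is reassigned the value $y$) is irrelevant because $A_i$ has no arc there. The calls $\tau_{n-1},\tau_n,\tau_{n+1}$ have the same form, decrementing positions $n-1,n,n+1$ respectively under guards forcing the decremented value positive, so $A_{n-1},A_n,A_{n+1}$ are safe. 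For the nested call $\tau_0$ the guard is $x_n>0\land y>0$; the outer application copies $x_1,\dots,x_{n-1}$ and decrements $x_n$, so $A_n$ records exactly the right strict and non-strict loops, and since $A_n$ carries no arc at position $n+1$ the undetermined value returned by the inner call $\tau_{n+1}$ imposes no constraint (in particular safety does not require that inner call to terminate in any prescribed way).

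For ISCT the decisive observation is that every arc of every generator is \emph{diagonal}, i.e.\ of the form $x_k\to x_k$. The composition rules preserve this, and at a fixed position $k$ they produce an arc exactly when both factors have an arc at $k$, strict precisely when at least one factor is strict there. As $A_j$ has an arc at $k$ iff $k\le j$ (strict iff $k=j$), a direct position-by-position computation yields
\[ A_i;A_j=A_{\min(i,j)}. \]
Thus $\{A_1,\dots,A_{n+1}\}$ is already closed under composition, so $\cl(\mcal A_n)=\{A_1,\dots,A_{n+1}\}$, each $A_j$ is idempotent (since $A_j;A_j=A_j$), and each carries the strict self-loop $\arcd{x_j}{x_j}$; hence every idempotent of the closure has a strict arc, which is exactly the ISCT condition. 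I expect no serious obstacle: the only points requiring care are confirming that coordinates strictly above the decremented one are genuinely unconstrained (so the copied-in $y$ and the inner return value cause no trouble) and recording that, uniformly in $n$, everything takes place in the finite set $\{A_1,\dots,A_{n+1}\}$ with primitive recursive composition, so the whole argument, being finitary and bounded, goes through in $\RCA$.
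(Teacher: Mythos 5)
Your proof is correct and takes essentially the same approach as the paper: safety is verified call-by-call using the fact that the branch guard forces the decremented coordinate to be positive (with positions above the decremented one unconstrained because $A_j$ has no arcs there), and ISCT follows from the diagonal structure of the generators under composition. Your explicit identity $A_i;A_j=A_{\min(i,j)}$, giving $\cl(\mathcal{A}_n)=\{A_1,\ldots,A_{n+1}\}$, is just a sharper packaging of the paper's one-line argument that any composition $G_0;\ldots;G_{k-1}$ contains the strict arc $\arcd{x_i}{x_i}$ for the least index $i$ with $A_i$ among the factors.
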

\begin{proof}
	It is easy to see that $\mathcal{A}_n$ is a safe description. Let us show that every $G\in{\sf cl}(\mathcal{A}_n)$ has an arc $\arcd{x}{x}$. Let $G=G_0;G_1;\ldots;G_{k-1}$ with  $G_j\in\mathcal{A}_n$ for every $j<k$. Let $0<i\leq n+1$ be least such that $A_i\in\{G_0,\ldots,G_{k-1}\}$. Then  $\arcd{x_i}{x_i}\in G$. 
\end{proof}

Note that the size-change graphs defined in this description could be extended to other size-change graphs, which also safely describe $P$, by adding to $A_j$ the arcs $\arce{y}{x_{j+1}}$, $\arce{y}{y}$ and $\arce{x_{i}}{x_{i}}$ for every $j< i \leq n$. Anyway for our goals the description above is more suitable. 

\begin{corollary}[$\RCA$]\label{Corollary: lower bound}
	ISCT soundness implies $\WO(\omega_3)$.
\end{corollary}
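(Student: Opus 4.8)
The plan is to chain together the results already established in this section. By Corollary~\ref{wo tot A}, the principle $\WO(\omega^{\omega^\omega})$ is equivalent over $\RCA$ to the totality of all the generalized P\'eter-Ackermann functions $A^n_f$, i.e.\ $(\forall n>0)(\forall f\colon\N\to\N)\,\Tot(A^n_f)$. Since $\omega_3=\omega^{\omega^\omega}$ by the notational conventions fixed earlier, it therefore suffices to derive, from ISCT soundness, that $A^n_f$ is total for every $n>0$ and every $f\colon\N\to\N$.

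First I would fix an arbitrary $n>0$ and $f\colon\N\to\N$, and consider the program computing $A^n_f$ together with its description $\mathcal{A}_n$. By the preceding Proposition, $\mathcal{A}_n$ is a \emph{safe} ISCT description of $A^n_f$, provably in $\RCA$. Hence ISCT soundness (the statement of Theorem~\ref{soundness} in its ISCT form) applies directly and yields that the program computing $A^n_f$ is terminating. Termination on every input means exactly that $A^n_f(\mathbf x,y)\ev$ for all $\mathbf x,y$, i.e.\ $\Tot(A^n_f)$. Since $n$ and $f$ were arbitrary, we obtain $(\forall n>0)(\forall f)\,\Tot(A^n_f)$, and then invoking Corollary~\ref{wo tot A} gives $\WO(\omega^{\omega^\omega})=\WO(\omega_3)$.

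The only point requiring a little care is bookkeeping about what ``terminating'' delivers: the definition of termination is that the entry function evaluates to a natural number on every input, and here the entry function of the program is precisely $A^n_f$, so termination of the program is literally the totality of $A^n_f$ as a function. One must also note that $f$ enters only as an oracle/primitive operator in the base case $\alpha(\mathbf x)=0$, and that $\mathcal{A}_n$ does not depend on $f$; the safety and idempotent-closure argument of the previous Proposition is uniform in $f$, so the single application of ISCT soundness suffices for all $f$ simultaneously once $f$ is fixed as a parameter.

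I do not expect any genuine obstacle here, as this Corollary is essentially the assembly step that combines Corollary~\ref{wo tot A} with the safe-ISCT-description Proposition. The substantive work has already been carried out: the reverse-mathematical content lives in Proposition~\ref{wo tot} and Proposition~\ref{FA} (identifying $A^n_f$ with the fast-growing hierarchy and tying its totality to $\WO(\omega^\alpha)$), while the combinatorial content lives in verifying that $\mathcal{A}_n$ is a genuine safe ISCT description. The present statement is then a one-line consequence of applying ISCT soundness to each $A^n_f$.
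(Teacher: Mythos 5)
Your proposal is correct and follows exactly the paper's intended argument: the paper's proof is the one-line observation that the result ``follows from Corollary~\ref{wo tot A}'', implicitly combined with the immediately preceding Proposition that $\mathcal{A}_n$ is a safe ISCT description of $A^n_f$, which is precisely the assembly you spell out. Your additional bookkeeping (termination of the program being literally $\Tot(A^n_f)$, and uniformity in $f$) is accurate and just makes explicit what the paper leaves implicit.
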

\begin{proof}
This follows from Corollary \ref{wo tot A}.
\end{proof}

\begin{remark}\label{Remark: fromMSCTtoAck}
	Note that for any standard $n>0$, $\RCA$ proves that $\mathcal{A}_n\text{ is MSCT}$. In particular, MSCT soundness implies $\WO(\omega^{\omega})$ by Corollary \ref{wo tot A}.  It turns out that proving MSCT for all $\mcal A_n$ requires $\is2$.  
	\end{remark}

\begin{proposition}[$\RCA$]\label{Propositon:MSCTandIS2}
	The following are equivalent:
	\begin{itemize}
		\item $\is2$;
		\item For all $n>0$, $\mathcal{A}_n$ is MSCT.
	\end{itemize}
\end{proposition}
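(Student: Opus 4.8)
The plan is to first reduce the statement to a purely combinatorial principle about bounded sequences. Since every size-change graph $A_j\in\mcal A_n$ consists only of the self-loops $\arcd{x_j}{x_j}$ and $\arce{x_i}{x_i}$ for $0<i<j$, the only arcs available are of the form $x\to x$; hence any infinite descent in a multipath must follow a single coordinate. Writing a multipath as $A_{j_0},A_{j_1},\dots$ with $j_i\in\{1,\dots,n+1\}$, a descent along coordinate $k$ requires $x_k\to x_k\in A_{j_i}$ for all large $i$, i.e.\ $j_i\geq k$ eventually (non-strict when $j_i>k$, strict when $j_i=k$), with strictness infinitely often, i.e.\ $j_i=k$ for infinitely many $i$. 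Thus a descent exists iff $k$ is the \emph{least} value occurring infinitely often in $\langle j_i\rangle$. I would record that, over $\RCA$, ``$\mcal A_n$ is MSCT for all $n$'' is therefore equivalent to the $\liminf$ principle: every $j\colon\N\to\{1,\dots,N\}$ has a least infinitely-occurring value $k$ with $j_i\geq k$ eventually. The proposition then amounts to $\is2\biimp$ this principle.

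For $\is2\Rightarrow\text{MSCT}$, given a multipath I treat $\langle j_i\rangle$ as a bounded $\Delta^0_1$ sequence. The predicate $P(k)\equiv\exists^\infty i\,(j_i=k)$ is $\Pi^0_2$. Using $\bs2$ (a consequence of $\is2$) some value occurs infinitely often: otherwise each $v\leq n+1$ occurs finitely, and bounding the finitely many ``last occurrence'' stages forces $j_i>n+1$ eventually, a contradiction. The least number principle $\mathrm{L}\Pi^0_2$ (equivalent to $\is2$) then yields the least $k$ with $P(k)$; since every $v<k$ occurs only finitely often, a further $\bs2$-bounding of their last-occurrence stages produces $t$ with $j_i\geq k$ for all $i\geq t$. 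The desired infinite descent is the constant sequence $x_k\to x_k\to\cdots$ from step $t$, whose arcs lie in $A_{j_i}$ and which is strict infinitely often; it exists as a set once $k,t$ are fixed, so $\mcal A_n$ is MSCT.

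For the converse I would derive $\mathrm{L}\Pi^0_2$, hence $\is2$, from the $\liminf$ principle. Given $\varphi(x)=\forall u\exists w\,\rho(x,u,w)\in\Pi^0_2$ and a witness $x_0$ with $\varphi(x_0)$, define the primitive-recursive \emph{verified length} $V(x,s)$, the largest $u\leq s$ with $\forall u'<u\,\exists w\leq s\,\rho(x,u',w)$; it is nondecreasing in $s$ and unbounded iff $\varphi(x)$. I then define the bounded $\Delta^0_1$ sequence $j_s=$ the least $x\leq x_0$ with $V(x,s)>V(x,s-1)$, and $j_s=x_0$ if no index increases at $s$. Applying the $\liminf$ principle gives a value $k$ with $j_s=k$ infinitely often and $j_s\geq k$ eventually, and I verify $k$ is the least $x\leq x_0$ with $\varphi(x)$: if $\varphi(k)$ failed then $V(k,\cdot)$ would be eventually constant (a bounded monotone $\Delta^0_1$ sequence, provably eventually constant in $\RCA$ via $\mathrm{L}\Pi^0_1$), so $V(k,\cdot)$ increases only finitely often and, since $k\neq x_0$, $j_s=k$ only finitely often, contradicting $j_s=k$ infinitely often; and if $\varphi(x)$ held for some $x<k$ then $V(x,\cdot)$ would increase infinitely often, forcing $j_s\leq x<k$ infinitely often, contradicting $j_s\geq k$ eventually. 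As $x_0$ is a witness, this least $x\leq x_0$ is the global least, giving $\mathrm{L}\Pi^0_2$.

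The main obstacle is the converse gadget: the sequence $\langle j_s\rangle$ must be designed so that its $\liminf$ pinpoints \emph{exactly} the least true index, and I must check that every auxiliary fact used in the verification — that bounded monotone $\Delta^0_1$ sequences are eventually constant, and the bounding of finitely many last-occurrence stages in the forward direction — is available already in $\RCA$ (at the $\is1$/$\bs1$ level, with $\bs2$ supplied by $\is2$ where needed). This confines the genuine strength to the $\liminf$ principle itself, so that the equivalence with $\is2$ is clean in both directions.
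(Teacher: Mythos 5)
Your proof is correct, and its forward direction is essentially the paper's (take the least index $i$ such that $A_i$ occurs infinitely often, justified by $\bs2$ and the $\Pi^0_2$ least number principle, both consequences of $\is2$); but your reversal takes a genuinely different route. The paper derives $\is2$ via the Strong Pigeonhole Principle: it invokes the equivalence of $\is2$ with the existence of the set $I^\infty$ of colors occurring infinitely often, and adapts the ``guesses'' construction of \cite{FSY2017}, encoding the $2^k-1$ nonempty subsets of the $k$ colors as the indices of $\mathcal{A}_n$ (larger subsets getting smaller indices) and reading $I^\infty$ off the coordinate of the descent. You instead derive $\mathrm{L}\Pi^0_2$ directly: your verified-length function $V(x,s)$ is a movable-marker device, $\varphi(x)$ holds iff $V(x,\cdot)$ is unbounded, and the liminf of the sequence $j_s$ (least index whose verified length increases at stage $s$, with default $x_0$) is exactly the least witness of $\varphi$ below $x_0$; all auxiliary facts you flag (eventual constancy of bounded monotone $\Delta^0_1$ sequences, bounding of finitely many last occurrences) are indeed available at the $\is1$/$\bs2$ level where you place them. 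Your route is more self-contained --- it bypasses both the citation of \cite{FSY2017} and the equivalence of $\is2$ with the pigeonhole principle, needing only the textbook equivalence $\mathrm{L}\Pi^0_2 \leftrightarrow \is2$ --- and your intermediate ``liminf principle'' cleanly isolates the combinatorial content of ``$\mathcal{A}_n$ is MSCT for all $n$'', since every arc of every $A_j$ is a self-loop. What the paper's approach buys is brevity given the machinery already developed in the earlier paper, plus a finer per-$n$ consequence that is actually reused later: the proof shows that MSCT for $\mathcal{A}_n$ with $n=2^k-2$ already yields $\mathrm{SPP}_k$, which the conclusion exploits to separate MSCT$^*$ soundness from $\WO(\omega_3)$ in a suitable model; your gadget would need a small reworking to recover that specific corollary.
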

\begin{proof}
	For the forward direction, let $M=G_0,G_1,\ldots$ be a multipath with $G_i\in\mcal{A}_n$. Let $0<i\leq n+1$ be least such that $A_i$ appears infinitely often. Then there is an infinite descent starting with $x_i$.
	
	For the reversal, one can adapt the proof of \cite[Theorem 6]{FSY2017}. For the sake of completeness, we briefly describe the main idea. Starting point is that $\is2$ is equivalent to the Strong Pigeonhole Principle (see e.g., \cite{FSY2017}) which states that given a coloring in $k$ many colors of the natural numbers, there exists the set of colors which appear infinitely many times in this coloring. Therefore it is sufficient to show that for  every finite coloring $c:\NN\to k$ the set $I^\infty=\{i<k\colon (\exists^\infty x)c(x)=i\}$ exists. As in the proof of \cite[Theorem 6]{FSY2017} we can define for all $x\in\NN$ the set $\mcal I_x$ of guesses at stage $x$. That is, every $I\in\mcal I_x$ is a non-empty subset of $k$ and $I\subseteq I^\infty$ iff $I\in\mcal I_x$ for infinitely many $x$. Let $n+1=2^k-1$. Then we have $n+1$-many non-empty subsets of $k$, say $I_1,\ldots,I_{n+1}$. We can assume that $|I_i|<|I_j|$ implies $i>j$. Now define a multipath $M=G_0,G_1,\ldots$ in $\mcal A_n$ by letting $G_x=A_i$, where $i$ is least such that $I_i$ is a guess at stage $x$ of maximal size. By the assumption there exists an infinite descent starting from some parameter $x_i$ with $0<i\leq n+1$ at some point $t$. We claim that $I_i=I^\infty$. Since there are infinitely many arcs of the form $\arcd{x_i}{x_i}$, we have that $I_i\subseteq I^\infty$. Now suppose for a contradiction that $I_i\neq I^\infty$. Then there exists a stage $x>t$ with a guess $I$ of size bigger than $I_i$. Therefore, by definition, there exists $j<i$ such that $G_x=A_j$ and so in $G_x$ there is no arc from $x_i$ to $x_i$, a contradiction. 
\end{proof}


\section{Upper bound}

In this section we aim to show the following:

\begin{theorem}[$\RCA$]\label{upper bound}
	$\WO(\omega_3)$ implies ISCT soundness.
\end{theorem}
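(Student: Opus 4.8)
The plan is to prove termination by bounding the size of the activation tree $T_P^{\mathbf u}$ by a value of the fast growing hierarchy and then invoking $\WO(\omega_3)$ only for totality, thereby avoiding both K\"onig's lemma (which would cost $\ACA$) and the Ramsey-type passage from ISCT to MSCT (which would cost $\is2$). Fix a safe ISCT description $\mcal G$ of $P$ with entry function $f_0$, and let $k$ bound the arities of the finitely many function identifiers. Since the finitely many primitive operators denote total functions, I would define in $\RCA$ a single increasing $\bar f\colon\N\to\N$ dominating them, e.g. $\bar f(x)=x+1+\max\{\interp g(\mathbf w): g\in\op,\ \mathbf w\le x\}$; this $\bar f$ serves as the base function of the hierarchy and absorbs the growth caused by the operators, while the recursion structure of $P$ is captured by a fixed ordinal $\gamma<\omega^\omega$.

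The heart of the argument is a quantitative, Ramsey-free descent lemma for the finite monoid $(\cl(\mcal G),{;})$. By the composition rules, every state transition sequence $(f_0,\mathbf u)\xrightarrow{\tau_0}(f_1,\mathbf u_1)\xrightarrow{\tau_1}\cdots$ projects to a word $G_{\tau_0}G_{\tau_1}\cdots$ over $\cl(\mcal G)$, and safety guarantees that each arc $x\to y\in G_{\tau_i}$ forces the corresponding value to weakly decrease, strictly for $\arcd{x}{y}$. I would organize an arbitrary finite such word by a Simon-style factorization forest of height $h=O(|\cl(\mcal G)|)$: at every internal node whose children compose to an idempotent $E=E;E$, the ISCT hypothesis supplies a strict self-loop $\arcd{x}{x}\in E$, so the composite transition over that factor strictly decreases the value of parameter $x$. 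Because parameter values are natural numbers, at each of the finitely many nesting levels a strict decrease can occur only boundedly often in terms of the current values; nesting these $h$ layers of bounded iteration yields an ordinal ranking with values below $\omega^{m}$ for a standard $m$ (roughly $m=k\cdot h$), such that the length of the transition sequence and the magnitudes of the values occurring along it are simultaneously bounded by iterates controlled by $\bar f$ --- precisely the shape of the canonical computation $K_{\bar f}$ of $F_{\gamma,\bar f}$ for $\gamma=\omega^{m}<\omega^\omega$.

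With the descent lemma in hand the conclusion is routine. Under $\WO(\omega_3)$ the function $F_{\gamma,\bar f}$ is total for every standard $\gamma<\omega^\omega$: this is Proposition \ref{wo tot} applied with $\alpha=\gamma$, using that $\omega^\gamma<\omega^{\omega^\omega}$ and that $\WO$ of a larger notation implies $\WO$ of a smaller one (see also Corollary \ref{wo tot A}). Hence, given $\mathbf u$, the number $N=F_{\gamma,\bar f}(c(\mathbf u))$ exists, where $c(\mathbf u)$ measures the input, and every state transition sequence starting at $(f_0,\mathbf u)$ has length at most $N$ with all intermediate values below $N$. Therefore $T_P^{\mathbf u}$ is a finitely branching tree --- at most one child per call $\tau$ --- all of whose nodes are sequences of length $\le N$ over a bounded alphabet, so it exists and is finite by bounded $\Sigma^0_1$-comprehension, with no appeal to K\"onig's lemma. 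By Proposition \ref{termination tree}, $P$ terminates on $\mathbf u$; as $\mathbf u$ was arbitrary, $P$ is terminating, which is ISCT soundness.

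The main obstacle is the quantitative descent lemma, and in particular keeping it at the correct strength. I must (i) prove the factorization/idempotent analysis with an explicit standard height bound \emph{without} invoking Ramsey's theorem for pairs or $\Sigma^0_2$-induction, so that the combinatorics remain provable in $\RCA$; (ii) read off an honest ordinal assignment below $\omega^\omega$ from the nested strict decreases and verify it matches the recursion of $F_{\gamma,\bar f}$, so that the only transfinite ingredient is the totality statement supplied by $\WO(\omega_3)$; and (iii) check that the bounded comprehension assembling the activation tree and the induction bounding values by $\bar f$-iterates use only $\Sigma^0_1$-induction. Aligning the ordinal bookkeeping exactly with the computation $K_{\bar f}$, rather than with a wastefully larger ordinal, is where the care is concentrated.
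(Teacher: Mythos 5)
Your overall strategy --- extract explicit bounds on computations in terms of the fast growing hierarchy, and use $\WO(\omega_3)$ only through Proposition \ref{wo tot} to get totality of the relevant $F_{\gamma,\bar f}$ --- is a coherent alternative to the paper's argument; it is essentially Ben-Amram's bound-extraction route, and the paper itself follows a version of it for tail-recursive programs in Section 6.3. (One framing error first: the paper's proof of this theorem does \emph{not} pass through MSCT or infinite Ramsey; it only uses the \emph{finite} Ramsey theorem for pairs, which is provable in $\RCA$, to obtain the folding bound of Lemma \ref{bound}. So replacing finite Ramsey by a Simon-style factorization forest buys nothing foundationally; both are $\RCA$-provable combinatorics.)

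The genuine gap is in your ``quantitative, Ramsey-free descent lemma'', and specifically in the claim that along a state transition sequence ``the length of the transition sequence and the magnitudes of the values occurring along it are simultaneously bounded by iterates controlled by $\bar f$''. This is false as stated for programs with nested recursion, which is exactly the hard case. In the P\'{e}ter-Ackermann program one has the transition $(A,(x,y))\xrightarrow{\tau_1}(A,(x-1,v))$ where $v$ is the \emph{value} of the completed sub-computation $A(x,y-1)$: parameters with no incoming arc in the size-change graph receive outputs of sub-computations, which are fast-growing-sized, not $\bar f$-iterate-sized. The factorization forest (like the paper's foldable multipaths) controls descent only in the \emph{constrained} parameters; it says nothing about the unconstrained ones. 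Consequently, bounding the branches of $T_P^{\mathbf u}$ requires bounding values, bounding values requires bounding the sub-computations that produce them, and bounding those sub-computations is the statement being proved --- a circularity your sketch does not break. Breaking it requires a simultaneous induction along the canonical computation $K_{\bar f}$, with an ordinal assignment attached to whole terms so that nested calls are tracked by the term structure; that construction is the entire content of the proof, and the phrase ``precisely the shape of the canonical computation $K_{\bar f}$'' conceals rather than supplies it. This is also needed for your last step: without value bounds, bounded $\Sigma^0_1$-comprehension cannot assemble $T_P^{\mathbf u}$ as a finite set, and ``all branches have length $\le N$'' alone does not yield finiteness in $\RCA$. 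The paper avoids all of this by using $\WO(\omega_3)$ as an infinite-descent principle rather than through totality: it assigns to each term $t_n$ of an assumed infinite reduction sequence a natural sum of ordinals $\omega^{\gamma}$ or $\omega^{\gamma}\cdot a$ with $\gamma<\omega^\omega$ coding a bounded-length transition sequence, and shows this assignment strictly decreases at every proper reduction; whatever values occur are simply read off the given sequence, so no a priori bounds on them are ever needed.
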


From the proof of this result we then extract a bound for the length of computations of tail-recursive ISCT programs (see Subsection \ref{tail-recursive}). 
\begin{definition}
	We call \tb{proper} a term of the form $f(\mb u)$ with $f\in\fun$ and $u\in\N^{\arity(f)}$. We also say that a reduction $t\to_P s=t[f(\mb u)]_\rho$ is \tb{proper} if $f\in\fun$.
\end{definition}

\tb{Proof idea}: Given an infinite reduction sequence \[ f(\mb u)=t_0\to_P t_1\to_P \ldots \to_P t_n\to_P\ldots \] we assign ordinals \[\omega_3>\alpha_0>\alpha_1>\ldots>\alpha_n>\ldots \]  

Actually, since we also consider reductions involving primitive operators,  we will define a non-increasing sequence of ordinals which decreases infinitely many times. For the details, one can go to the proof. Here we just outline some ideas that hopefully will make the proof easier to follow.  

As in Tait \cite{Tait},  we want to assign to each term $t_n$ a finite set of ordinals $\gamma_0,\gamma_1,\ldots$ in $\omega^\omega$. Each $\gamma_i$ corresponds to a subterm of $t_n$ of the form $g(\mb s)$ with $g\in\fun$. 
To $t_n$ we assign the ordinal \[\alpha_n=\bigoplus_{i}\beta_i<\omega_3, \] where 
\[ \beta_i=\begin{cases} \omega^{\gamma_i}\cdot a & \text{ if $g(\mb s)$ is proper } \\ \omega^{\gamma_i} & \text{ otherwise} \end{cases} \]

Here $a$ is the the maximum \tb{size} of $e^f$ for $f\in\fun$. The size of an expression is defined as the number of function symbols. The idea is that a one-step reduction gives rise to at most $a$-many subterms of the form $g(\mb s)$ with $g\in\fun$. We use the following basic fact from ordinal arithmetic without further notice: if $\gamma>\gamma_0,\gamma_1,\ldots$ then $\omega^\gamma>\bigoplus_{i}\omega^{\gamma_i}$. Therefore, if a term $t$ with assigned ordinal $\gamma$ gives rise to $a$-many subterms $s_0,s_1,\ldots$ with ordinals $\gamma_0,\gamma_1,\ldots$, $\gamma>\gamma_i$ if $t_i$ is proper and $\gamma\geq\gamma_i$ otherwise, then $\omega^\gamma\cdot a> \bigoplus_{i}\beta_i$, where $\beta_i$ is defined as above.

\begin{example}
	Consider the following illustrative example. To the sequence 
	\[ A(2,3)\to A(1,A(2,2))\to A(1,A(1,A(2,1)))\to\ldots \]
	we assign the ordinals
	\[ \omega^{\omega 2+3}\cdot 2 > \omega^{\omega2+3}+\omega^{\omega2+2}\cdot 2> \omega^{\omega2+3}+\omega^{\omega2+2}+ \omega^{\omega2+1}\cdot 2>\ldots       \]
	This is a descending sequence in $\omega_3$. Here, $a=2$. For a descending sequence in $\omega^\omega$ use base $b=3$ instead of $\omega$. We have $b^{\omega^2}=\omega^\omega$. Replace $\omega^{\omega x+y}a$ with $\omega^x\cdot b^y\cdot a$.
\end{example}

Following  Ben-Amram \cite{Ben-Amram}, there exists a bound $m\in\N$ such that every finite multipath $M=G_0,\ldots,G_n,\ldots$ of length $\geq m$ is \tb{foldable}, where $M$ is foldable if it can be decomposed into three multipaths $M=ABC$ with $H=\overline{B}=\overline{C}=\overline{BC}$, where $\overline{M}$ is the composition of the graphs in $M$.  Note that $H$  is idempotent. In particular, the source and the target functions of $H$ coincide. 
The idea is to assign to each subterm of $t_n$ of the form $g(\mb s)$ with $g\in\fun$  an ordinal of the form $\gamma(\mb u)<\omega^\omega$, where  $\mb u=\mb u_0,\mb u_1,\ldots$ is a finite sequence of tuples appearing in a state transition sequence  $(f_0,\mb u_0)\xrightarrow{G_0}(f_1,\mb u_1)\xrightarrow{G_1}\ldots$ of length $<m$. 
In a one-step reduction we might either extend or contract a finite state transition sequence $(f_0,\mb u_0)\xrightarrow{G_0}(f_1,\mb u_1)\xrightarrow{G_1}\ldots$ into another finite state transition sequence $(g_0,\mb v_0)\xrightarrow{H_0}(g_1,\mb v_1)\xrightarrow{H_1}\ldots$. The second case arises when the corresponding multipath becomes foldable. In the first case, the sequence $\mb v=\mb v_0,\mb v_1,\ldots$ properly extends the sequence $\mb u=\mb u_0,\mb u_1,\ldots$. In the second case, the sequence  $\mb v$ is lexicographically smaller than the sequence $\mb u$. It turns out that we can map sequences $\mb u$ of bounded length to ordinals $\gamma(\mb u)$ in $\omega^\omega$ so that in both cases $\gamma(\mb u)>\gamma(\mb v)$.  For the sake  of exposition we say that the sequence $\mb u$ is \tb{above} the sequence $\mb v$.

The existence of a bound on the length of foldable multipaths is an easy application of finite Ramsey's theorem. We thus have the following:

\begin{lemma}[$\RCA$]\label{bound}
	Let $\mcal G$ be a finite set of size-change graphs. Then there exists $m\in\N$ such that every multipath $M$ in $\cl(\mcal G)$ of length $\geq m$ is foldable. 
\end{lemma}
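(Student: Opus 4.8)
The plan is to read off from $M$ a \emph{composition coloring} of pairs and hit it with the finite Ramsey theorem for pairs. First I would record that $\cl(\mcal G)$ is finite: since $\mcal G$ is finite it involves only finitely many function identifiers, hence finitely many parameters, and a size-change graph $G\colon f\to g$ is merely a set of arcs between $\var(f)$ and $\var(g)$; thus there are only finitely many size-change graphs over the identifiers occurring in $\mcal G$, and $\cl(\mcal G)$, being contained in this finite collection, is finite. Let $c=|\cl(\mcal G)|$ and let $m$ be the finite Ramsey number guaranteeing that every coloring of the pairs from an $m$-element linearly ordered set with $c$ colors has a monochromatic subset of size $3$. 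The point I would stress here is that $\RCA$ proves the existence of $m$, since the totality of the (number-theoretic) finite Ramsey function is already provable in $\is1$.

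Next, given a multipath $M=G_0,\ldots,G_{N-1}$ with $N\geq m$, for $0\leq i<j\leq N$ I would write $\overline{M[i,j)}=G_i;G_{i+1};\ldots;G_{j-1}\in\cl(\mcal G)$ for the composition of the intervening graphs and color the pair $\{i,j\}$ by $\overline{M[i,j)}$. By associativity of composition this coloring is \emph{multiplicative}: for $i<j<k$ the color of $\{i,k\}$ is the composition of the colors of $\{i,j\}$ and $\{j,k\}$. Applying finite Ramsey I obtain indices $i<j<k$ whose three pairs all receive the same color $H\in\cl(\mcal G)$, that is $\overline{M[i,j)}=\overline{M[j,k)}=\overline{M[i,k)}=H$. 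Multiplicativity then forces $H=\overline{M[i,k)}=\overline{M[i,j)};\overline{M[j,k)}=H;H$, so $H$ is idempotent; in particular its source and target coincide. Taking $A=M[0,i)$, $B=M[i,j)$ and $C=M[j,k)$ yields consecutive blocks with $\overline{B}=\overline{C}=\overline{BC}=H$ idempotent, which is exactly the foldable pattern required.

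The hard part will be purely the choice and provability of the bound $m$; the rest is bookkeeping. I would be careful to invoke finite Ramsey in its number-theoretic form, whose Ramsey function is primitive recursive and hence total provably in $\RCA$, rather than the infinite Ramsey theorem for pairs, which is vastly stronger ($\mathrm{RT}^2_2$) and would defeat the purpose; keeping the argument inside $\RCA$ is precisely what this choice secures. A smaller point I would verify is that each color $H\in\cl(\mcal G)$ carries a well-defined source and target, so that a monochromatic triangle automatically forces $f_i=f_j=f_k$ and makes the composite $H;H$ meaningful — this is what guarantees that the extracted $H$ is a genuine idempotent size-change graph $H\colon f\to f$ and that $A$, $B$, $C$ genuinely compose along $M$.
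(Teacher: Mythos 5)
Your proof is correct and is precisely the argument the paper intends: the paper's entire proof of Lemma \ref{bound} is the phrase ``Finite Ramsey's theorem for pairs,'' and your write-up supplies exactly the standard details behind that invocation --- finiteness of $\cl(\mcal G)$, the interval-composition coloring, its multiplicativity, a monochromatic triple, idempotency of the common color, and the observation that equality of the three colors forces source and target to coincide --- all carried out with the number-theoretic finite Ramsey theorem, whose totality is available in $\RCA$.

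One caveat, which is a defect of the paper's stated definition rather than of your argument: the Ramsey triple $i<j<k$ need not have $k$ equal to the last index of $M$, so your blocks $A,B,C$ exhaust only the prefix $M[0,k)$ rather than all of $M$, contrary to your closing claim that this ``is exactly the foldable pattern required'' under the literal reading $M=ABC$. No argument can close that gap, because the literal statement is false: take $\mcal G=\{e,z\}$ with $e\colon f\to f$ consisting of the single non-strict arc $\arce{x}{x}$ and $z\colon f\to f$ empty, so that $e;e=e$ and $e;z=z;e=z;z=z$; in the multipath $e,\dots,e,z$ every admissible $B$ composes to $e$ while every final block $C$ contains the last graph and composes to $z$, so no full decomposition exists no matter how long the multipath is. Hence the lemma is true exactly under the reading your proof establishes --- $B$ and $C$ are consecutive nonempty blocks occurring somewhere inside $M$ with $\overline{B}=\overline{C}=\overline{BC}$ --- and that is also all that finite Ramsey can deliver; you should state this reading explicitly instead of asserting that the three displayed blocks decompose $M$.
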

\begin{proof}
	Finite Ramsey's theorem for pairs.
\end{proof}

Note that the above lemma applies to every  $\mcal G$. The ISCT assumption ensures that the idempotent size-change graph $H$ in the definition of foldable multipath contains a strict arc of the form $\arcd{x}{x}$.

In the next subsection we show how to map  sequences $\mb u$ of bounded length to ordinals $\gamma(\mb u)<\omega^\omega$ so that if $\mb u$ is above $\mb v$ then $\gamma(\mb u)>\gamma(\mb v)$. The reader may skip this part on a first reading.

\subsection{Aboveness}
Let $p \in \N$ be fixed. 

\begin{definition}
	Given a sequence $\mathbf{u}\in\NN^{<p}$, let $\mb u_p\in (\N \cup\{\omega\})^p$ be the sequence of length $p$ which is obtained from $\mb u$ by adding $p-\text{length}(\mathbf{x})$-many $\omega$. That is,
	\[
	\mathbf{u}_p := \mathbf{u} \conc \omega^{(p - \text{length}(\mathbf{u}))}, 
	\]
	where for any natural number $n$, $\omega^{(n)}$ is the sequence of length $n$ with constant value $\omega$. 
\end{definition}

It is easy to see that $\mb u$ is above $\mb v$ if and only if $\mb u_p >_{\text{lex}_{p}} \mb{v}_p$,  	where $\text{lex}_{p}$ is the standard lexicographic order of $(\omega+1)^{p}$.

\begin{definition}
	Given a sequence $\mathbf{u} \in \omega^{<p}$, define
	\[
	\gamma_p(\mb u) := \bigoplus_{i=0}^{p-1}\omega^{p-1-i}(2\cdot\mathbf{u}_p(i)).
	\]
\end{definition}

\begin{lemma}[$\RCAo$]
	Let $p \in \N$ and $\mathbf{u}\in\NN^{<p}$. For any $j < p$, 
	\[
	\bigoplus_{i=j}^{p-1}\omega^{p-1-i}(2\cdot\mathbf{u}_p(i)) < \omega^{p-j} \cdot 2. 
	\]
\end{lemma}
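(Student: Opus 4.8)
The plan is to bound the displayed natural sum termwise and then reduce to the elementary fact that a natural sum of distinct powers of $\omega$ stays below twice its leading power. Throughout, write $\ell=\mathrm{length}(\mathbf{u})$, so that $\mathbf{u}_p(i)$ equals the finite value $\mathbf{u}(i)$ for $i<\ell$ and equals $\omega$ for $\ell\le i<p$. The one point to keep in mind is that in ordinal arithmetic $2\cdot\omega=\omega$, so whenever $\mathbf{u}_p(i)=\omega$ the summand collapses to $\omega^{p-1-i}\cdot(2\cdot\mathbf{u}_p(i))=\omega^{p-1-i}\cdot\omega=\omega^{p-i}$ rather than growing.

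First I would establish the uniform termwise estimate $\omega^{p-1-i}\cdot(2\cdot\mathbf{u}_p(i))\le\omega^{p-i}$ for every $i$ with $j\le i<p$. When $\mathbf{u}_p(i)$ is finite this is the strict inequality $\omega^{p-1-i}\cdot(2\cdot\mathbf{u}_p(i))<\omega^{p-1-i}\cdot\omega=\omega^{p-i}$, using that a power of $\omega$ times a natural number stays below the next power; when $\mathbf{u}_p(i)=\omega$ it is the equality noted above. Since the natural sum $\bigoplus$ is monotone in each argument, summing these estimates gives
\[
\bigoplus_{i=j}^{p-1}\omega^{p-1-i}\cdot(2\cdot\mathbf{u}_p(i))\ \le\ \bigoplus_{i=j}^{p-1}\omega^{p-i}.
\]

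Next I would evaluate the right-hand side. As $i$ ranges over $j,\dots,p-1$ the exponents $p-i$ take the distinct values $p-j,p-j-1,\dots,1$, so the natural sum of these powers is simply their Cantor normal form $\omega^{p-j}+\omega^{p-j-1}+\dots+\omega$. The tail $\omega^{p-j-1}+\dots+\omega$ has leading exponent $p-j-1<p-j$ and is therefore $<\omega^{p-j}$ (any ordinal whose leading CNF exponent is below $\beta$ lies below $\omega^{\beta}$; this is the basic fact already invoked in the proof idea). Hence the whole sum is $<\omega^{p-j}+\omega^{p-j}=\omega^{p-j}\cdot 2$, and chaining with the previous display yields the lemma. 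The case $j=p-1$, where the tail is empty and the sum is just $\omega<\omega\cdot 2$, is covered uniformly.

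I expect the only real subtlety to be the bookkeeping around the padded entries $\mathbf{u}_p(i)=\omega$: one must observe that such a summand is promoted to $\omega^{p-i}$ and that the uniform bound $\omega^{p-i}$ still holds, now with equality, so that monotonicity of $\bigoplus$ can be applied across the finite/infinite boundary. Everything else is elementary ordinal arithmetic below $\omega^{\omega}$. Regarding the base theory, all ordinals involved are concrete notations below $\omega^{\omega}$, the natural sum $\oplus$ and the order $<$ are primitive recursive on notations, and the only induction used (the tail estimate, or, if one prefers, the alternative downward induction on $j$ via $S_j=\omega^{p-1-j}\cdot(2\cdot\mathbf{u}_p(j))\oplus S_{j+1}$ with numerical bound $p$) is over a $\Delta^0_1$ predicate, so the whole argument goes through in $\RCAo$.
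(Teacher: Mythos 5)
Your proof is correct and is essentially the paper's argument: both rest on the termwise bound $\omega^{p-1-i}(2\cdot\mathbf{u}_p(i))\le\omega^{p-i}$ (with equality exactly at the padded entries, since $2\cdot\omega=\omega$) together with the fact that a sum of powers $\omega^{p-j},\omega^{p-j-1},\dots,\omega$ stays below $\omega^{p-j}\cdot 2$. The only difference is organizational: the paper runs a downward induction on $j$, bounding the head term and combining with the inductive hypothesis via $\omega^{p-j}\oplus\omega^{p-j-1}\cdot 2<\omega^{p-j}\cdot 2$, whereas you bound all terms simultaneously, invoke monotonicity of $\oplus$, and evaluate $\bigoplus_{i=j}^{p-1}\omega^{p-i}$ in closed form.
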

\begin{proof}
	We prove it by induction on $p-j+1$. If $j=p-1$, the first sum is empty and the thesis follows. Assume that the claim holds for $j+1$, we prove it  for $j$. Note that $\omega^{p-1-j}(2\cdot\mathbf{u}_p(j)) \leq  \omega^{p-j}$. Moreover by induction hypothesis
	\[
	\bigoplus_{i=j+1}^{p-1}\omega^{p-1-i}(2\cdot\mathbf{u}_p(i)) < \omega^{p-j-1} \cdot 2. 
	\]
	Therefore 
	\[
	\bigoplus_{i=j}^{p-1}\omega^{p-1-i}(2\cdot\mathbf{u}_p(i)) < \omega^{p-j} \oplus  \omega^{p-j-1} \cdot 2 < \omega^{p-j} \cdot 2. 
	\]
\end{proof}

\begin{lemma}[$\RCAo$]
	Let $p \in \omega$ and $\mathbf{u}, \mathbf{v} \in \N^{<p}$. If $\mb u$ is above $\mb v$ then $\gamma_p(\mathbf{u}) > \gamma_p(\mathbf{v})$. 
\end{lemma}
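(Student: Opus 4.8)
The plan is to reduce the comparison of $\gamma_p(\mathbf u)$ and $\gamma_p(\mathbf v)$ to a comparison of the tails of the two natural sums, starting at the first coordinate where the padded sequences disagree. Concretely, since $\mathbf u$ is above $\mathbf v$ we have $\mathbf u_p >_{\mathrm{lex}_p} \mathbf v_p$, so I would first fix the least $j<p$ with $\mathbf u_p(j)\neq\mathbf v_p(j)$; by the definition of the lexicographic order $\mathbf u_p(j) > \mathbf v_p(j)$ in $\omega+1$, and $\mathbf u_p(i)=\mathbf v_p(i)$ for all $i<j$. Writing $S=\bigoplus_{i<j}\omega^{p-1-i}(2\cdot\mathbf u_p(i))$ for the common prefix and $T_w=\bigoplus_{i=j}^{p-1}\omega^{p-1-i}(2\cdot\mathbf w_p(i))$ for the tail of $w\in\{\mathbf u,\mathbf v\}$, we get $\gamma_p(\mathbf u)=S\oplus T_{\mathbf u}$ and $\gamma_p(\mathbf v)=S\oplus T_{\mathbf v}$. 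Since the natural sum is strictly monotone in each argument, it then suffices to prove $T_{\mathbf u}>T_{\mathbf v}$, which isolates the single coordinate $j$ that matters.

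The heart of the argument is the chain
\[
T_{\mathbf v} < \omega^{p-1-j}(2\cdot\mathbf v_p(j)+2) \le \omega^{p-1-j}(2\cdot\mathbf u_p(j)) \le T_{\mathbf u}.
\]
For the first inequality I would peel off the leading summand of $T_{\mathbf v}$ and bound the remainder $\bigoplus_{i=j+1}^{p-1}\omega^{p-1-i}(2\cdot\mathbf v_p(i))$ by the preceding lemma, applied at index $j+1$, which yields $\omega^{p-j-1}\cdot 2 = \omega^{p-1-j}\cdot 2$; combined with strict monotonicity of $\oplus$ and the identity $\omega^{\delta}m\oplus\omega^{\delta}n=\omega^{\delta}(m+n)$ this gives $T_{\mathbf v}<\omega^{p-1-j}(2\cdot\mathbf v_p(j))\oplus\omega^{p-1-j}\cdot 2=\omega^{p-1-j}(2\cdot\mathbf v_p(j)+2)$. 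The last inequality is immediate, since $\omega^{p-1-j}(2\cdot\mathbf u_p(j))$ is the largest-exponent summand of $T_{\mathbf u}$. The middle inequality follows from $\mathbf u_p(j)>\mathbf v_p(j)$: if both values are finite then $2\cdot\mathbf u_p(j)\ge 2\cdot\mathbf v_p(j)+2$, so left-multiplication by $\omega^{p-1-j}$ preserves the inequality, while if $\mathbf u_p(j)=\omega$ the right side equals $\omega^{p-j}$ and dominates the finite multiple of $\omega^{p-1-j}$ on the left.

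The only real subtlety, and the point I would be most careful about, is the presence of the padding value $\omega$ in $\mathbf u_p$ and $\mathbf v_p$. One has to check that the same uniform chain covers the case $\mathbf u_p(j)=\omega$, using the convention $2\cdot\omega=\omega$ so that the $j$-th summand of $T_{\mathbf u}$ becomes $\omega^{p-j}$; since $j$ is the first disagreement and the $\omega$'s occur only as a terminal block, $\mathbf u_p(j)=\omega$ forces $\mathbf u_p(i)=\omega$ for all $i\ge j$, keeping the tail well understood. Finally, every ordinal involved lies below $\omega^p\le\omega^\omega$ and is coded by its finite Cantor normal form, so the facts used about $\oplus$ (commutativity, strict monotonicity, and additivity of coefficients at a fixed power) are primitive recursive verifications and hence available in $\RCAo$; no induction beyond the preceding lemma is required.
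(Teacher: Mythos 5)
Your proof is correct and follows essentially the same route as the paper's: locate the first coordinate $j$ where the padded sequences disagree, apply the preceding tail-bound lemma at index $j+1$, and use monotonicity of the natural sum together with $2\cdot\mathbf{u}_p(j)\geq 2\cdot\mathbf{v}_p(j)+2$. The only difference is presentational — you split off the common prefix $S$ explicitly and spell out the $\omega$-padding case, details the paper compresses into a single chain of inequalities.
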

\begin{proof}
	If $\mathbf{u}$ is  above  $\mathbf{v}$ then $\mb u_p  >_{\text{lex}_{p}} \mathbf{v}_p$.
	 Therefore there exists $j \in p$ such that 
	\[
	(\forall i < j)(\mathbf{u}_p(i) = \mathbf{v}_p(i) \wedge \mathbf{u}_p(j) > \mathbf{v}_p(j)).
	\]
	By the lemma above 
	\[
	\bigoplus_{i=0}^{p-1}\omega^{p-1-i}(2\cdot\mathbf{u}_p(i)) \geq \bigoplus_{i=0}^{j-1}\omega^{p-1-i}(2\cdot\mathbf{v}_p(i)) \oplus \omega^{p-j}(2\cdot (1+\mathbf{v}_p(j))) > \bigoplus_{i=0}^{p-1}\omega^{p-1-i}(2 \cdot \mathbf{v}_p(i)).
	\]
\end{proof}

\subsection{From  $\WO(\omega_3)$ to soundness}

We first give the following definition of stem. Note that this is made in $\RCA$. 
\begin{definition}
	
Let $t \rightarrow_P s$ with $s= t[f(\mathbf{u})]_\rho$. For every subterm $g(\mb s)$ of $s$ with $g\in\fun$ there exists a unique position $\sigma$ of $t$, called the \tb{stem} of $g(\mb s)$, such that  $t|_\sigma = h(\mathbf{t})$, $h\in\fun$,  and one of the following holds:
	\begin{itemize}
		\item $\sigma \perp \rho$ and  $g(\mathbf{s})= h(\mathbf{t})$;
		\item $\sigma \subset \rho$ and 
		$g(\mb s) = h(\mb s)$.
		In this case $h(\mathbf{t})$ is not proper and $\mb t \to_P \mb s$; 
		\item $\sigma = \rho$, and so $h(\mb t) = f(\mathbf{u})$, and 
		$g(\mb s) = e^f|_\tau(\mathbf{u})$
		for some $\tau: f \to g$. In this case $(f, \mathbf{u}) \xrightarrow{\tau}(g, \mb s)$.
	\end{itemize}
\end{definition}

\[
\begin{tikzpicture}[scale=1.71]
\node (1)       at (0,1.5)  {$t$};
\node (2)       at (-0.5,0.2)  {$\rho$};

\draw [line width=1.5, color = black](1)--(-1,0);
\draw [line width=1.5, color = black](1)--(1,0);
\draw [line width=1.5, color = black](-1,0)--(1,0);
\draw [line width=1.5, color = black](-0.5,0)--(-0.6,-0.25);
\draw [line width=1.5, color = black](-0.5,0)--(-0.4,-0.25);
\draw [line width=1.5, color = black](-0.6,-0.25)--(-0.4,-0.25);

\node (3)       at (4,1.5)  {$s$};
\node (4)       at (3.5,0.2)  {$\rho$};

\draw [line width=1.5, color = black](3)--(3,0);
\draw [line width=1.5, color = black](3)--(5,0);
\draw [line width=1.5, color = black](3,0)--(5,0);
\draw [line width=1.5, color = black](3.5,0)--(3.8,-0.75);
\draw [line width=1.5, color = black](3.5,0)--(3.2,-0.75);
\draw [line width=1.5, color = black](3.8,-0.75)--(3.2,-0.75);
\end{tikzpicture}
\]

\[
\begin{tikzpicture}[scale=1.7]
\node (1)       at (0,0)  {{\Large$\sigma \perp \rho $}};
\node (0)       at (-1,0)  { };

\node (3)       at (4,1.5)  {$s$};
\node (4)       at (3.5,0.2)  {$\rho$};
\node (5)       at (4.3,0.6)  {$\sigma$};

\draw [line width=1.5, color = black](3)--(3,0);
\draw [line width=1.5, color = black](3)--(5,0);
\draw [line width=1.5, color = black](3,0)--(5,0);
\draw [line width=1.5, color = black](3.5,0)--(3.8,-0.75);
\draw [line width=1.5, color = black](3.5,0)--(3.2,-0.75);
\draw [line width=1.5, color = black](3.8,-0.75)--(3.2,-0.75);

\draw [line width=1.5, color = black, dashed](4.2,0.5)--(4.5,0);
\draw [line width=1.5, color = black, dashed](4.2,0.5)--(3.9,0);
\end{tikzpicture}
\]

\[
\begin{tikzpicture}[scale=1.71]
\node (1)       at (0,0)  {{\Large$\sigma \subset \rho $}};
\node (0)       at (-1,0)  { };

\node (3)       at (4,1.5)  {$s$};
\node (4)       at (3.5,0.2)  {$\rho$};
\node (5)       at (3.6,0.6)  {$\sigma$};

\draw [line width=1.5, color = black](3)--(3,0);
\draw [line width=1.5, color = black](3)--(5,0);
\draw [line width=1.5, color = black](3,0)--(5,0);
\draw [line width=1.5, color = black](3.5,0)--(3.8,-0.75);
\draw [line width=1.5, color = black](3.5,0)--(3.2,-0.75);
\draw [line width=1.5, color = black](3.8,-0.75)--(3.2,-0.75);
\draw [line width=1.5, color = black, dashed](3.5,0.5)--(3.8,0);
\draw [line width=1.5, color = black, dashed](3.5,0.5)--(3.2,0);
\end{tikzpicture}
\]

\[
\begin{tikzpicture}[scale=1.71]
\node (1)       at (0,0)  {{\Large$\sigma = \rho $}};
\node (0)       at (-1,0)  { };

\node (3)       at (4,1.5)  {$s$};
\node (4)       at (3.5,0.15)  {$\rho=\sigma$};
\node (5)       at (3.5,-0.35)  {$\sigma\tau$};

\draw [line width=1.5, color = black](3)--(3,0);
\draw [line width=1.5, color = black](3)--(5,0);
\draw [line width=1.5, color = black](3,0)--(5,0);
\draw [line width=1.5, color = black](3.5,0)--(3.8,-0.75);
\draw [line width=1.5, color = black](3.5,0)--(3.2,-0.75);
\draw [line width=1.5, color = black](3.8,-0.75)--(3.2,-0.75);
\draw [line width=1.5, color = black, dashed](3.5,-0.4)--(3.65,-0.75);
\draw [line width=1.5, color = black, dashed](3.5,-0.4)--(3.35,-0.75);
\end{tikzpicture}
\]

\begin{proof}[Proof of Theorem \ref{upper bound}.]
	Let $\mathcal G$ be a safe ISCT description of a program $P$, i.e., $\mcal G$ is safe for $P$ and every idempotent graph $G\in\cl(\mcal G)$ contains a strict arc $\arcd{x}{x}$. 
	
	Suppose we are given an infinite reduction sequence \[ f_0(\mb u_0)=t_0\to_P t_1\to_P \ldots \to_P t_n\to_P\ldots \] 
	
	We will assign  ordinals \[\omega_3>\alpha_0\geq \alpha_1\geq\ldots\geq \alpha_n>\ldots \]  
	and prove that $\alpha_n>\alpha_{n+1}$ for infinitely many $n$.  Indeed, we will have 
	$\alpha_n>\alpha_{n+1}$ for every proper reduction $t_n\to_p t_{n+1}$. Note that in any infinite reduction sequence there must be infinitely many proper reductions (exercise). 
	
	If $m$ bounds the length of any non-foldable multipath in $\cl(\mcal G)$ and $r$ is the maximum arity of $f$ for $f\in\fun$, set $p=(m+1)\cdot r$.  Let $a$ be the the maximum size of $e^f$ for $f\in\fun$. From now on we identify a sequence $\mb u$ of length $<p$ with the ordinal  $\gamma_p(\mb u)\in\omega^\omega$.

	By primitive recursion we want to define for all $n$ and for every subterm $h(\mb{t})$ of $t_n$ with $h\in\fun$ a finite multipath  $M=G_0,G_1\ldots,G_{l-1}$ in $\cl(\mcal G)$ of length $<m$ and a sequence
	$\mb u=\mb u_0,\mb u_1,\ldots,\mb u_l$ of length $<p$ such that:
	\begin{enumerate}[(a)]
		\item $(f_0,\mb u_0)\xrightarrow{G_0}(f_1,\mb u_1)\xrightarrow{G_1}\ldots \xrightarrow{G_{l-1}}(f_l,\mb u_l)$, where $G_i:f_i\to f_{i+1}$.
		\item If  $h(\mb t)$ is proper, then  $(f_l,\mb u_l)=(h,\mb t)$.
		\item  If $h(\mb t)$ is not proper,  we also specify $\tau:f_l\to h$ such that  $(f_l,\mb{u}_l)\xrightarrow{\tau} (h,\mb{t})$.
	\end{enumerate}
	
	Note that $(f_l,\mb{u}_l)\xrightarrow{\tau} (h,\mb{t})$ is a $\Sigma^0_1$-condition. We assign to every such subterm $h(\mb t)$ the ordinal $\omega^{\mb u}$ if $h(\mb t)$ is not proper, and the ordinal  $\omega^{\mb u}\cdot a$ otherwise. Finally, we let $\alpha_n$ be the natural sum of all these ordinals. \smallskip
	
	\tb{Construction}.
	
	Stage $n=0$. We have only the term $f_0(\mb u_0)$. Let $M$ be the multipath of length $0$ consisting of $\var(f_0)$ and $\mb u=\mb u_0$. Conditions (a)--(c) are trivially satisfied. \smallskip
	
	Stage n+1. Let $t_{n+1}= t_n[f(\mb{u})]_\rho$.  We can assume by $\Sigma^0_1$-induction that for every non-proper subterm $h(\mb t)$ of $t_n$ the corresponding $\tau:f_l\to h$ is as above.  Let $g(\mb{s})$ be a subterm of $t_{n+1}$. We want to assign a pair $N, \mb v$. Let $\sigma$ be the stem of $g(\mb s)$ and $M,\mb u$ be the pair associated with $h(\mb t)=t_n|_\sigma$. 
	
	\begin{remark}
		If $\sigma\subseteq\rho$, we can specify $\tau:f_l\to g$ such that  $(f_l,\mb u_l)\xrightarrow{\tau}(g,\mb s)$. In fact,
		\begin{itemize}
			\item if $\sigma\subset\rho$, i.e., $h(\mb t)$ is not proper, then $(f_l,\mb u_l)\xrightarrow{\tau}(h,\mb t)$, where $\tau$ has been specified earlier in the construction by (c), and therefore $(f_l,\mb u_l)\xrightarrow{\tau}(g,\mb s)$. Note in fact that $h=g$ and $\mb t\to_P \mb s$;
			\item if $\sigma=\rho$, i.e., $h(\mb t)$ is proper and equal to $f(\mb u)$, then $(f_l,\mb u_l)=(f,\mb u)$ by (b), and therefore $(f_l,\mb u_l)\xrightarrow{\tau}(g,\mb s)$, where $\tau$ is such that $g(\mb s)=e^f|_\tau(\mb u)$. 
		\end{itemize}
	\end{remark}
	
	Case 1. $\sigma\perp \rho$ or $g(\mb s)$ is not proper.  Do nothing, i.e.,  $N=M$ and $\mb v=\mb u$.\smallskip
	
	Case 2. $\sigma\subseteq \rho$ and $g(\mb s)$ is proper, say $g(\mb s)=g(\mb w)$. Consider the multipath $MG$, where $G$ is the size-change graph for the call $\tau:f_l\to g$, where $\tau$ is as in the remark above.  \smallskip
	
	Sub-case 1. $MG$ has length $<m$. Let  $N=MG$, and $\mb v= \mb u,\mb w$. Note that $\mb u$ is above $\mb v$.\smallskip
	
	Sub-case 2. $MG$ is foldable, say $MG=ABC$, with $H=\overline{B}=\overline{C}=\overline{BC}$. Then we fold $M$, i.e., we let $N=AH$. Suppose that $B=G_i,\ldots,G_{j-1}$. Note that $g=f_i=f_j$ and $H:g\to g$.   Now, $H$ is idempotent, and so by ISCT  contains a strict arc $\arcd{x_k}{x_k}$ for some $k$, where $\var(g)=\{x_0,x_1\ldots,\}$.  We define $\mb v$ according to whether  $\arcd{x_k}{x_k}\in H$ or not.  Let 
	\[\mb v=\mb{u}_0,\ldots, \mb{u}_{i-1}, \mb v_i,\mb{w}, \] 
	where  the $k$-element of $\mb v_i$ equals the $k$-element of $\mb u_j$ if $\arcd{x_k}{x_k}\in H$, and the $k$-element of $\mb u_i$ otherwise. Note that we have $\mb u_i\geq \mb v_i$ coordinate-wise. Also, $\mb v_i$ is lexicographically smaller than $\mb u_i$ and therefore $\mb u$ is above $\mb v$. \smallskip
	
	\tb{Verification}. 
	
	Let us first check that in all cases $N,\mb v$ satisfy conditions (a)-(c). 
	
	Case 1. Clearly (a) holds. The only interesting case is when $\sigma\subseteq \rho$. In this case when $g(\mb s)$ is not proper we have (c) by the remark above.
	
	Case 2. Clearly (b) holds. We need only to check  (a).
	
	Sub-case 1. We just need to show that $(f_l,\mb u_l)\xrightarrow{G}(g,\mb w)$. This follows from the remark above and the safety of $\mcal G$. 
	
	Sub-case 2. We just need to show that $(f_{i-1},\mb u_{i-1})\xrightarrow{G_{i-1}}(g,\mb v_i)\xrightarrow{H}(g,\mb w)$. 
	The first state transition follows from the fact that   $(f_{i-1},\mb u_{i-1})\xrightarrow{G_{i-1}}(g,\mb u_i)$ and $\mb u_i\geq \mb v_i$. The second state transition  follows from the safety of $\mcal G$ and the fact that  both $(g,\mb u_i)\xrightarrow{\overline{BC}}(g,\mb w)$ and $(g,\mb u_j)\xrightarrow{\overline{C}}(g,\mb w)$ hold, and $H=\overline{BC}=\overline{C}$. \smallskip

	\begin{claim}
		For all $n$, $\alpha_n\geq\alpha_{n+1}$, and $\alpha_n>\alpha_{n+1}$ for infinitely many $n$.
	\end{claim}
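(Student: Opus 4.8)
The plan is to analyze a single step $t_n \to_P t_{n+1} = t_n[f(\mathbf{u})]_\rho$ and compare the natural sums $\alpha_n$ and $\alpha_{n+1}$ using the stem as the bookkeeping device. Recall that $\alpha_n$ is the natural sum, over all subterms $h(\mathbf{t})$ of $t_n$ with $h\in\fun$, of $\omega^{\gamma_p(\mathbf{u})}$ if $h(\mathbf{t})$ is not proper and of $\omega^{\gamma_p(\mathbf{u})}\cdot a$ if it is, where $\mathbf{u}$ is the associated sequence. The stem sends each $\fun$-subterm $g(\mathbf{s})$ of $t_{n+1}$ to a position $\sigma$ of $t_n$ with $t_n|_\sigma = h(\mathbf{t})$, $h\in\fun$. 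I would first observe that this induces a map from the $\fun$-subterms of $t_{n+1}$ to those of $t_n$: the fibre over a position $\sigma\neq\rho$ is a single subterm (an ancestor if $\sigma\subset\rho$, an unaffected subterm if $\sigma\perp\rho$), whereas the fibre over $\rho$ consists exactly of the $\fun$-subterms lying inside the reduct $e^f(\mathbf{u})$. This latter fibre is non-empty only when $f\in\fun$, i.e.\ precisely for proper reductions.

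For the non-redex part I would check, subterm by subterm, that the ordinal assigned in $t_{n+1}$ is at most the ordinal of its stem in $t_n$. If $\sigma\perp\rho$ the subterm is literally unchanged (Case 1), so the ordinals coincide. If $\sigma\subset\rho$ the stem $h(\mathbf{t})$ is non-proper (it contains the redex) with ordinal $\omega^{\gamma_p(\mathbf{u})}$; in $t_{n+1}$ the descendant $h(\mathbf{s})$ is either still non-proper, inheriting the same pair (Case 1, equal ordinal), or has become proper (Case 2), in which case its new sequence $\mathbf{v}$ is below $\mathbf{u}$, so by the aboveness lemma $\gamma_p(\mathbf{v})<\gamma_p(\mathbf{u})$ and hence $\omega^{\gamma_p(\mathbf{v})}\cdot a<\omega^{\gamma_p(\mathbf{u})}$. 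Thus the non-redex contributions never increase, which already yields $\alpha_{n+1}\leq\alpha_n$ for operator reductions, where the fibre over $\rho$ is empty and the redex $f(\mathbf{u})$ with $f\in\op$ carries no ordinal at all.

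The heart of the argument is the redex part for a proper reduction $f\in\fun$. Here the redex contributes $\omega^{\gamma}\cdot a$ with $\gamma=\gamma_p(\mathbf{u}^{(\rho)})$, and I must bound the total contribution of the $\fun$-subterms of $e^f(\mathbf{u})$. By the construction (Case 2 and the aboveness lemma) each proper such subterm carries $\omega^{\gamma_p(\mathbf{v})}\cdot a$ with $\gamma_p(\mathbf{v})<\gamma$, hence an ordinal $<\omega^\gamma$, while each non-proper one inherits the redex's $\gamma$ and carries exactly $\omega^\gamma$. Writing $N_n$ for the number of non-proper $\fun$-subterms of $e^f(\mathbf{u})$, the natural sum of the proper contributions is $<\omega^\gamma$ since $\omega^\gamma$ is additively indecomposable, so the whole fibre sums to $\omega^\gamma\cdot N_n\oplus\delta$ with $\delta<\omega^\gamma$, which is $<\omega^\gamma\cdot(N_n+1)$. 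The remaining combinatorial point — which I expect to be the main obstacle — is that $N_n<a$: since $e^f$ has at most $a$ function symbols, $e^f(\mathbf{u})$ has at most $a$ function-symbol occurrences, and every non-proper $\fun$-subterm strictly contains a further function symbol, namely the root of one of its non-numerical arguments. Sending each non-proper subterm to such an occurrence is injective and never hits the root of $e^f(\mathbf{u})$, so $N_n\leq a-1$. Consequently the fibre sums to $<\omega^\gamma\cdot a$, and the redex contribution strictly drops.

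Combining the two parts, monotonicity of the natural sum gives $\alpha_{n+1}\leq\alpha_n$ for every $n$, with a strict drop whenever the step is a proper reduction. Finally I would invoke the fact, already noted in the proof, that any infinite reduction sequence contains infinitely many proper reductions, which yields $\alpha_n>\alpha_{n+1}$ for infinitely many $n$ and completes the Claim. The only genuinely delicate steps are the counting bound $N_n<a$ and the additive-indecomposability computation; everything else is a careful but routine transcription of the case analysis recorded in the Construction and Verification, with the aboveness lemma used as a black box.
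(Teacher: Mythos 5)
Your proof is correct and takes essentially the same route as the paper's: the same stem--fibre decomposition of $\alpha_n$ and $\alpha_{n+1}$ into position-wise comparisons $\alpha_\sigma\geq\beta_\sigma$, the same three cases (untouched subterms, ancestors of the redex, and the redex fibre itself), the same use of the aboveness lemma, and the same appeal to infinitely many proper reductions for the strict drops. The only difference is one of detail, not of method: you make explicit the counting bound $N_n<a$ (via the injection of non-proper subterms into non-root function-symbol occurrences) and the additive-indecomposability computation, both of which the paper's Case 3 leaves implicit.
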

	
	For every position $\sigma$ of a subterm $h(\mb t)$ of $t_n$ with $h\in\fun$, let $\alpha_\sigma$ be the ordinal corresponding to $h(\mb t)$, $S_\sigma$ be the set of subterms $g(\mb s)$ of $t_{n+1}$ with stem $\sigma$, and $\beta_\sigma$ be the natural sum of  ordinals corresponding to  terms in $S_\sigma$. Note the we may have $S_\sigma=\emptyset$. In such a case let $\beta_\sigma=0$. On the other hand, every subterm $g(\mb s)$ of $t_{n+1}$ with $g\in\fun$ belongs to some $S_\sigma$.   Thus $\alpha_n=\oplus_{\sigma}\alpha_\sigma$ and $\alpha_{n+1}=\oplus_{\sigma}\beta_\sigma$.

	We claim that  $\alpha_\sigma\geq \beta_\sigma$  for every position $\sigma$ and hence $\alpha_n\geq\alpha_{n+1}$. 
	
	Case 1. If $h(\mb t)$ is not proper, then  $\alpha_\sigma=\omega^{\mb u}$ and $|S_\sigma|=1$. Say $S_\sigma=\{g(\mb s)\}$.  We assign ordinal $\omega^{\mb u}$ if $g(\mb s)$ is not proper, and $\omega^{\mb v}\cdot a$ otherwise. In the latter case $\mb u$ is almost above $\mb v$, and so $\alpha_\sigma>\beta_\sigma$.
	
	Case 2. If $h(\mb t)$ is proper and $\sigma\neq\rho$, then $\alpha_\sigma=\omega^{\mb u}\cdot a$, $S_\sigma=\{h(\mb t)\}$ and $\alpha_\sigma=\beta_\sigma$. 
	
	Case 3. If $\sigma=\rho$, that is $h(\mb t)$ is the  $f(\mb u)$ in the function reduction from $t_n$ to $t_{n+1}$, then  $\alpha_\rho=\omega^{\mb u}\cdot a$ and $|S_\rho|\leq a$. Note that if $e^f(\mb u)\in \N$ then $S_\rho=\emptyset$, and so $\alpha_\rho>\beta_\rho$.  Otherwise, each $g(\mb s)$ in $S_\rho$ is either not proper, in which case we assign the ordinal $\omega^{\mb u}$, or proper, in which case we assign an ordinal $\omega^{\mb v}\cdot a$, where $\mb u$ is almost above $\mb v$, and so $\omega^{\mb u}>\omega^{\mb v}\cdot a$. In both cases we have $\alpha_\rho>\beta_\rho$.

	Note that Case 1 might occur in any reduction and hence we can have  $\alpha_n>\alpha_{n+1}$ even if the reduction is not proper. However, Case 3 occurs in every proper reduction  and  $\alpha_n>\alpha_{n+1}$ for every such reduction. The claim follows.
\end{proof}

In particular Theorem \ref{upper bound} shows that any descending sequence of ordinals associated to some computation of the generalized P\'{e}ter-Ackermann function $A^n_f$ (as defined in Section \ref{genP-A}) is bounded by some ordinal of the form $\omega^{\omega^{b_n}}$ for some natural number $b_n$. Anyway such $b_n$ depends on the bound on the length of foldable multipaths provided in Lemma \ref{bound} by an application of the finite Ramsey's theorem for pairs. Since uniform bounds for the finite Ramsey's theorem for pairs are rather large, so are the bounds $b_n$ extracted from our proof. These are definitely larger than $\omega^{\omega^n}$, the ordinal which corresponds to $A^n_f$ (see Proposition \ref{wo tot} and Proposition \ref{FA}).

\subsection{Upper bound for tail-recursive programs}\label{tail-recursive}

In this subsection we consider tail-recursive programs. A program function definition is tail-recursive is the recursive call occurs only once and it is the most external function. For instance:
\[
h(\mathbf{t}) = h(f_0(\mathbf{t}), \dots f_{n-1}(\mathbf{t})). 
\]

A program is tail-recursive if every function definition is tail-recursive and there is no mutual recursion. There exists a direct transition-based translation into transition-based programs (see, e.g., \cite{Jones}) and tail-recursive programs are often easy to handle in implementations. 

The goal of this section is to show that  the functional programs which are tail-recursive and ISCT compute exactly the primitive recursive functions. On the one hand, all primitive recursive functions can be computed by simple tail-recursive programs which are ISCT (e.g. see \cite{SCTstar}). On the other hand, Ben-Amram in \cite{BenAmram} has already proved that the first order functional programs defined without nested recursion which are ISCT compute primitive recursive functions. Since tail-recursive programs do not allow nested recursion our result is a corollary of \cite{BenAmram}.  In \cite{SCTstar} there is a different proof which uses an analysis of the intuitionistic proof of the Termination Theorem. As a side result of this analysis, some large bounds are extracted. By following a completely different approach, which follows closely the proof of Theorem \ref{upper bound}, we provide a new proof and we extract the corresponding bounds.

\begin{proposition}\label{Proposition:tail-recursive}
	$\WO(\omega^\omega)$ implies the termination of every tail-recursive ISCT program.
\end{proposition}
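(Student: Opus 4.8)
The plan is to reprise the ordinal assignment from the proof of Theorem~\ref{upper bound}, but to exploit the special shape of tail-recursive reductions so that all assigned ordinals stay below $\omega^\omega$ rather than merely below $\omega_3$. Arguing in $\RCA$, suppose we are given an infinite reduction sequence $f_0(\mb u_0)=t_0\to_P t_1\to_P\ldots$ of a safe, tail-recursive, ISCT description $\mcal G$. As in that proof, I would attach to each $t_n$ an ordinal $\alpha_n$, prove $\alpha_n\geq\alpha_{n+1}$ for every reduction and $\alpha_n>\alpha_{n+1}$ for every proper reduction, and then, since any infinite reduction sequence contains infinitely many proper reductions, obtain an infinite descending sequence of ordinals. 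Once I have checked that $\alpha_n<\omega^\omega$ for all $n$, this contradicts $\WO(\omega^\omega)$.

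The decisive first step is a structural observation about tail-recursive computations. Since there is no mutual recursion, the call graph of $P$ is well-founded; let $R$ be the length of its longest chain, equivalently the maximum of $\rank(f)$ over $f\in\fun$. Because in every body the unique recursive call is the outermost function symbol, self-calls never nest, and every other call strictly lowers the rank. Using leftmost reduction I would prove, by $\Sigma^0_1$-induction along the sequence, that in every reachable $t_n$ the proper subterms form a tree in which rank strictly decreases along each root-to-leaf branch, hence of nesting depth at most $R$, and of total size bounded by a fixed constant $C=C(P)$: expanding a body replaces one proper subterm by at most $a$ proper subterms of strictly lower rank, the innermost proper calls eventually resolve to numerals, and no stack of same-rank recursive calls can ever accumulate. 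This is exactly the phenomenon that fails for nested recursion such as Ackermann's $A(x-1,A(x,y-1))$, where the pending-call stack grows without bound.

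With bounded configurations in hand, I would combine, for each $n$, the finitely many state-transition sequences tracked at the proper subterms of $t_n$ (each kept of length $<m$ by folding via Lemma~\ref{bound}) into a single tuple $\mb u_n$ of length below a fixed $p=p(P)$, obtained by listing the proper subterms in a canonical order, say by position, and concatenating their aboveness-sequences. I would then set $\alpha_n:=\gamma_p(\mb u_n)$, which by construction lies in $\omega^\omega$. The verification that $\alpha_n\geq\alpha_{n+1}$, with strict inequality at proper reductions, is a replay of Cases 1--3 of Theorem~\ref{upper bound}: extending a thread replaces a padding $\omega$ by a finite value and so strictly lowers $\gamma_p$, while folding a foldable same-rank thread uses the ISCT arc $\arcd{x}{x}$ in the idempotent $H$ to make the new tuple lexicographically, hence aboveness-wise, smaller, exactly as before. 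The crucial gain is that bounded nesting depth lets me use the single aboveness ordinal $\gamma_p(\mb u_n)<\omega^\omega$ directly, instead of the natural sum of exponentials $\bigoplus_i\omega^{\gamma_i}\cdot a$ forced by unbounded stacks, which only lands in $\omega^{\omega^\omega}=\omega_3$.

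The main obstacle I anticipate is the bookkeeping in the structural step: establishing, entirely within $\RCA$, that the configuration of pending proper calls stays of bounded depth and size, and then organising the concatenation of the per-subterm state-transition sequences so that the aboveness order on the single tuple $\mb u_n$ simultaneously tracks every local change, namely the extension of one thread, the folding of another, and the untouched siblings, precisely as the natural sum did in Theorem~\ref{upper bound}. Once this encoding is set up correctly, the remaining ordinal arithmetic and the appeal to $\WO(\omega^\omega)$ are routine, and the same analysis yields the primitive recursive bound on the length of the computation announced for the tail-recursive case.
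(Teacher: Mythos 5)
Your structural observation is correct and is a genuine insight: for a tail-recursive program under the paper's leftmost reduction, the pending proper calls never stack at the same level, so every reachable $t_n$ has nesting depth and number of function-headed subterms bounded by a constant depending only on $P$. However, the step that converts this into an ordinal assignment is where your proof breaks, and the breakage is not bookkeeping but the mathematical core. Comparing position-ordered concatenations of threads by padded lexicographic order does not give a monotone quantity, because a reduction step inserts and deletes entries in the \emph{middle} of the tuple, and lexicographic order is unstable under such shifts. Concretely, take a body $h(x,y)=h(a(x),b(x))$ and the step $h(5,9)\to_P h(a(5),b(5))$: both new proper subterms receive the same thread $\mb u^h\conc(5)$, where $\mb u^h$ is the thread of the redex $h(5,9)$. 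If $e^a(x)=x+1$, the next (proper) reduction is $h(a(5),b(5))\to_P h(6,b(5))$, which simply deletes the thread of $a(5)$ from your concatenation, so that $\mb u_{n+1}$ is a strict prefix of $\mb u_n$; after the common prefix, $\mb u_{n+1}$ has padding $\omega$ where $\mb u_n$ has natural numbers, hence $\gamma_p(\mb u_{n+1})>\gamma_p(\mb u_n)$. Your ordinal strictly \emph{increases} at a proper reduction, exactly where you need a strict decrease. A dual failure occurs on insertion, when the redex is replaced by several new subterms and the suffix of the tuple shifts right: the first differing coordinate then compares unrelated values. Your heuristic that ``extending a thread replaces a padding $\omega$ by a finite value'' is only valid when the change happens at the end of the concatenation; for threads in the middle it is false. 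This instability is precisely why Theorem \ref{upper bound} uses a \emph{natural sum} of per-subterm ordinals rather than a single position-indexed tuple: the natural sum does not care where a subterm sits and decreases as soon as one summand is replaced by finitely many strictly smaller ones.

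The paper's own proof keeps the natural-sum skeleton of Theorem \ref{upper bound} unchanged and instead shrinks the per-subterm ordinals, using the call hierarchy rather than any bound on configuration size. Fix a linearization $g_0,\dots,g_{k-1}$ of the functions so that $g_j$ does not occur in the body of $g_i$ for $i<j$ (possible since there is no mutual recursion); a subterm headed by $g_j$ with thread $\mb u$ receives the ordinal $\omega^{pj}(\omega \cdot \gamma_p(\mb u)+1)$ if proper and $\omega^{pj}(\omega \cdot \gamma_p(\mb u))$ if not, and $\alpha_n$ is the natural sum of these over all function-headed subterms; everything stays below $\omega^\omega$ because the thread ordinal $\gamma_p(\mb u)$ now enters multiplicatively instead of in an exponent. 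Tail recursion is used only at the point where a level-$j$ redex is expanded: its body contributes at most one call of level $\leq j$ (the single outermost recursive call, which inherits the thread $\mb u$ or a smaller one, hence loses the ``$+1$'') and otherwise only calls of level strictly below $j$, which the weight $\omega^{pj}$ dominates. No bound on the number or depth of pending calls is needed. If you wanted to salvage your scheme, you would have to index slots by a fixed structural datum (in effect, the level $j$) rather than by position, and handle multiple subterms at one level by a sum rather than by concatenation --- at which point you have reconstructed the paper's assignment rather than found an alternative to it.
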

\begin{proof}
	We follow the argument of Theorem \ref{upper bound}, but we assign a different ordinal to every subterm $g(s)$ of $t_n$.   Given a tail-recursive functional program $P$ let $\bp{g_0, \dots,g_{k-1}}$ be a fixed ordering between the functions of $P$ such that for every $i<j<k$ $g_j$ does not occur in the expression defining the function $g_{i}$ (note that such an ordering exists since there is no mutual recursion in $P$).
	
	Given a term $g_j(\mathbf{t})$ and $\mathbf{u}$, where $j$ is the index with respect to our fix ordering, we assign to $g_j(\mathbf{t})$ the ordinal $\omega^{pj}(\omega \cdot \gamma(\mathbf{u}) +1 )$ if $g_j(\mathbf{t})$ is proper and the ordinal $\omega^{pj}(\omega \cdot \gamma(\mathbf{u}))$ if $g_j(s)$ is not proper.
	
	Now following the schema of the verification for the bound as presented in the proof of Theorem 6.1  we have the following cases:
	\begin{itemize}
		\item If $g_j(\mathbf{t})$ is not proper, then $\alpha_\sigma = \omega^{pj}(\omega \cdot \gamma(\mathbf{u}))$ and $S_\sigma = \bp{g_i(\mathbf{s})}$ for some $i\leq j$. In both the possible cases for $g_i(\mathbf{s})$ we have  $\alpha_\sigma > \beta_\sigma $, since $\omega \cdot \gamma(\mathbf{u})$ is a limit ordinal. 
		\item If $g_j(\mathbf{t})$ is proper and $\sigma \neq \rho$ then $S_\sigma=\bp{g_j(\mathbf{t})}$. Therefore $\alpha_\sigma =\beta_\sigma$. 
		\item If $\sigma =\rho$. If $g_j(\mathbf{t})$ is proper, since the program is tail recursive we have in the worst case that $S_\sigma = \bp{f_0(t), ... f_{n-1}(t), g_{j'}(f_0(t),..., f_{n-1}(t))}$ for some functions $f_0, \dots, f_n$ whose level which respect to our ordering is less than $j$ and $j' \leq j$. 
		Since $f_0, \dots  f_{n-1}$ have index less than $j$ with respect to our fixed ordering, we associate either $\omega^{p{h_i}}(\omega \cdot \gamma(\mathbf{u}) )$ (if not proper) or $\omega^{p{h_i}}(\omega \cdot \gamma(\mathbf{v_i}) +1 )$ (if proper) to them for some $h_i< j$ and some $\mathbf{v}_i$ such that $\mathbf{u}$ is almost above $\mathbf{v}_i$. If $g_{j'}(f_0(\mathbf{t}),..., f_{n-1}(\mathbf{t}))$ is not proper we associate $\omega^{p{j'}}(\omega \cdot \gamma(\mathbf{u}))$ to it, otherwise we associate $\omega^{p{j'}}(\omega \cdot \gamma(\mathbf{v})+1)$ for some $\mathbf{v}$ such that $\mathbf{u}$ almost above $\mathbf{v}$. Since $j \geq j'$ we have  $\alpha_\sigma > \beta_\sigma $.  \qedhere
	\end{itemize}
\end{proof}

Now assume that $P$ is a tail-recursive program as above and let $g_i$ be the entry function of $P$. By the proposition above every computation from $g(x_0,\dots, x_{n-1})$ corresponds to a descending sequence of ordinals below $\omega^{pi+1}$. We claim that any computation of $g_i(x_0, \dots, x_{n-1})$ has length less than $F_{{pi+2},f}(0)$, for $f(x)= 2x + 2 + \max\{x_0, \dots, x_{n-1},p\}$. 

To prove this we directly adapt the proof of Proposition \ref{wo tot}. Note that for every $\alpha \in \omega^{pi+1}$ we have $\mathrm{mc}(\alpha) \leq \max\{a_i : i \in n\} \cup\{p\}$. Moreover, if $\mathbf{v}$ is obtained after a step from $\mathbf{u}$ in the proof of Proposition \ref{Proposition:tail-recursive}, then $\max\{v_j : j < p+1\} < \max\{u_j: j< p+1\}+1$, therefore $f(x) > \mathrm{mc}(\alpha_{x+1}) + x+1$, as required in the proof of Proposition \ref{wo tot}.  Observe that the proof of Proposition \ref{wo tot} guarantees that for every infinite decreasing sequence  $\alpha_i$ we get that every $\beta_i$ is positive. We can straightforwardly derive from this argument that if the sequence of $\alpha_i$ has length $n$, then we $\beta_i$ is positive for every $i < n$. 

Assume that we have a decreasing sequence of ordinals below $\omega^{pi+1}$ which is derived from a computation of length greater than $F_{{pi+2},f}(0)$. Hence we would get that the corresponding $\beta_i$ are positive for every $i \leq F_{{pi+2},f}(0)$. But this provides a contradiction, by definition of $\beta_i$.

Observe that, in general, these bounds seem to be huge. For instance let us consider the toy-program analyzed in \cite{SCTstar}:
	\[
	\begin{split}
	f(x,y,\temp,\esp,z) = &\mbb{ if } (y=0) \mbb{ then } 1\\
	&\mbb{ else if } (y=1) \mbb{ then } \esp\\
	&\qquad \mbb{ else } \tau_1: f(x,y-1,*, \tau_2: g(x,y,0,\esp,x),*)\\
	g(x,y,\temp,\esp,z) = &\mbb{ if } (z=0) \mbb{ then } 0\\
	&\mbb{ else if } (z=1) \mbb{ then } \temp\\
	& \qquad\mbb{ else }  \tau_0: g(*,*,\temp+\esp,\esp,z-1)\\	
	\end{split}			 
	\]
	where $*$ denotes any value. Note that $f(x,y,0,1,z)$ computes $x^y$. Every size-change graph corresponds to some composition of $G_{\tau_0}:g \to g$, $G_{\tau_1}:f\to f$ and $G_{\tau_2}:f \to g$. 
	\[
	\begin{tikzpicture}[xscale=1,yscale=1]
	\node (G0) at (-0.5, 4.5) {$G_{\tau_0}$};
	\node (x1) at (0, 4) {$x$};
	\node (y1) at (0, 3) {$y$};
	\node (temp1) at (0, 2) {$\temp$};
	\node (exp1) at (0, 1) {$\esp$};
	\node (z1) at (0, 0) {$z$};
	
	\node (x2) at (2, 4) {$x$};
	\node (y2) at (2, 3) {$y$};
	\node (temp2) at (2, 2) {$\temp$};
	\node (exp2) at (2, 1) {$\esp$};
	\node (z2) at (2, 0) {$z$};
	
	\node (G1) at (4.5, 4.5) {$G_{\tau_1}$};
	\node (x3) at (5, 4) {$x$};
	\node (y3) at (5, 3) {$y$};
	\node (temp3) at (5, 2) {$\temp$};
	\node (exp3) at (5, 1) {$\esp$};
	\node (z3) at (5, 0) {$z$};
	
	\node (x4) at (7, 4) {$x$};
	\node (y4) at (7, 3) {$y$};
	\node (temp4) at (7, 2) {$\temp$};
	\node (exp4) at (7, 1) {$\esp$};
	\node (z4) at (7, 0) {$z$};
	
	\node (G2) at (9.5, 4.5) {$G_{\tau_2}$};
	\node (x5) at (10, 4) {$x$};
	\node (y5) at (10, 3) {$y$};
	\node (temp5) at (10, 2) {$\temp$};
	\node (exp5) at (10, 1) {$\esp$};
	\node (z5) at (10, 0) {$z$};
	
	\node (x6) at (12, 4) {$x$};
	\node (y6) at (12, 3) {$y$};
	\node (temp6) at (12, 2) {$\temp$};
	\node (exp6) at (12, 1) {$\esp$};
	\node (z6) at (12, 0) {$z$};

	\path (exp1) edge [->]node [auto] {${\Downarrow}$} (exp2);
	\path (z1) edge [->]node [auto] {${\downarrow} $} (z2);
	
	\path (x3) edge [->]node [auto] {${\Downarrow}$} (x4);
	\path (y3) edge [->]node [auto] {${\downarrow} $} (y4);
	
	\path (x5) edge [->]node [auto] {${\Downarrow}$} (x6);
	\path (y5) edge [->]node [auto] {${\Downarrow} $} (y6);
	\path (x5) edge [->]node [auto] {${\Downarrow} $} (z6);
	\path (exp5) edge [->]node [auto] {${\Downarrow} $} (exp6);
	\end{tikzpicture}
	\]
	The idempotent graphs in $\cl(\mathcal{G})$ are $G_{\tau_0}:g\to g$ and $G_{\tau_1}:f \to f$ (since the source and the target of the other size-change graphs are different).
	Hence this program is ISCT. Recall that $p$ is defined to be $(m+1)r$. Note that the maximal arity $r$ for this program is $5$, so $p>5$. As we already mentioned, the bound $m$ for the length of the unfoldable multipaths is provided by an application of the finite Ramsey's theorem and it is well-known that the bounds for the finite Ramsey's numbers are pretty large for $n>4$. Therefore the bounds extracted from this proof involve functions $F_{l,f}$ with $l$ much bigger than $5$, which is extremely loose, since $F_{3, i \mapsto i+1} (\max \{ x, y \})$ is already a bound for the length of the computations of this program. This can be shown as a direct application of the bound for the Termination Theorem provided in  \cite{Schmitz}. 
	

\section{Conclusion}

In this paper we proved that, over $\RCA$,  
\begin{itemize}
	\item ISCT soundness $=$ $\WO(\omega_3)$ (by Corollary \ref{Corollary: lower bound} and by Theorem \ref{upper bound});
	\item  MSCT soundness $ \geq \WO(\omega_2)$ (see Remark \ref{Remark: fromMSCTtoAck}).
\end{itemize}
It is known that $\is2$ implies $\WO(\omega_2)$, but does not imply $\WO(\omega_3)$ (see \cite[Remark 2.4]{SimpsonWO}). Moreover, $\WO(\omega_3)$ does not imply $\is2$ (see \cite[Corollary 4.3]{SimpsonWO}). Since $\is2 + $ MSCT soundness implies $\WO(\omega_3)$, we can conclude that MSCT soundness $> \WO(\omega_2)$. 

Finally, since every MSCT program is also ISCT, provably in $\RCA$, we have that $\WO(\omega_3) \geq$ MSCT soundness.  As discussed in the introduction, we leave  open whether the inequality is strict (see Question \ref{question}). Let $A^n$ be the generalized P\'{e}ter-Ackermann function for the successor function (i.e. $f(x)=x+1$). If we restrict MSCT soundness to the statement we call MSCT$^*$ soundness:  
\[
\forall n (A^n \mbox{ MSCT } \implies A^n \mbox{ terminates}),
\]
then we obtain something strictly weaker than $\WO(\omega_3)$, as one can obtain a model which seperates them, in the following manner:

Starting from a countable, nonstandard model of PA, shorten it (i.e., take an initial segment) to a model $M$, such that $M \models\WO(\omega^{\omega^n})$ if and only if $n$ is standard. In $M$, the Strong Pigeonhole Principle SPP$_n$ fails for all nonstandard $n$, but is true for all standard $n$. So there exists a coloring in $n$ many colors of the natural numbers for which there does not exist a set of colors which appear infinitely many times in this coloring.

This is a model such that $M \models \forall n (A^n \mbox{ MSCT } \implies A^n \mbox{ terminates})$ and $M \models \exists n (A^n \mbox{ does not terminate})$.  Indeed, we have  that $A^n$ is MSCT only if $n$ is standard, as $A^n$ MSCT implies SSP$_{k}$, for $n=2^{k}-2$, as shown in the proof of Proposition \ref{Propositon:MSCTandIS2}. 

Therefore, there is a separation between ISCT Soundness and MSCT$^*$ Soundness. This suggests that a possible direction, to address Question \ref{question}, could be to solve the following:
\begin{question}
	Is MSCT Soundness equivalent to MSCT$^*$ Soundness? 
\end{question}
Of course the direction from MSCT Soundness  to MSCT$^*$ Soundness is trivial, as the latter one is a direct corollary of the former one. The vice versa is still open.

\subsection*{Funding}
The work of the first author was supported by the Funda\c{c}\~{a}o para a Ci\^{e}ncia e a Tecnologia [UID/MAT/04561/2013] and Centro de Matem\'{a}tica,  Applica\c{c}\~{o}es Fundamentais e Investiga\c{c}\~{a}o Operacional of Universidade de Lisboa.

The work of the fourth author is partially supported by
JSPS KAKENHI grant number 16K17640 and JSPS Core-to-Core Program
(A.~Advanced Research Networks).


\begin{thebibliography}{DHLS03}
\bibitem{BenAmram}
Amir~M. Ben-Amram.
\newblock General size-change termination and lexicographic descent.
\newblock In Torben Mogensen, David Schmidt, and I.~Hal Sudborough, editors,
{\em The Essence of Computation: Complexity, Analysis, Transformation. Essays
	Dedicated to Neil D. Jones}, volume 2566 of {\em Lecture Notes in Computer
	Science}, pages 3--17. Springer-Verlag, 2002.

\bibitem{Ben-Amram}
Amir~M. Ben-Amram.
\newblock General size-change termination and lexicographic descent.
\newblock In {\em The essence of computation}, pages 3--17. Springer, 2002.

\bibitem{CJS}
Peter~A. Cholak, Carl~G. Jockusch, and Theodore~A. Slaman.
\newblock On the strength of \text{Ramsey's} theorem for pairs.
\newblock {\em Journal of Symbolic Logic}, 66(1):1--55, 2001.

\bibitem{CSY2012}
Chi-Tat Chong, Theodore~A. Slaman, and Yue Yang.
\newblock {$\Pi^1_1$}-conservation of combinatorial principles weaker than
{R}amsey's {T}heorem for pairs.
\newblock {\em Advances in Mathematics}, 230:1060--1077, 2012.

\bibitem{Schmitz}
Diego Figueira, Santiago Figueira, Sylvain Schmitz, and Philippe Schnoebelen.
\newblock Ackermannian and primitive-recursive bounds with {D}ickson's lemma.
\newblock In {\em Proceedings of the 26th Annual {IEEE} Symposium on Logic in
	Computer Science, {LICS} 2011, June 21-24, 2011, Toronto, Ontario, Canada},
pages 269--278, 2011.

\bibitem{Fri16}
Emanuele Frittaion, Matt Hendtlass, Alberto Marcone, Paul Shafer, and Jeroen
Van~der Meeren.
\newblock Reverse mathematics, well-quasi-orders, and {N}oetherian spaces.
\newblock {\em Archive for Mathematical Logic}, 55(3):431--459, 2016.

\bibitem{FSY2017}
Emanuele Frittaion, Silvia Steila, and Keita Yokoyama.
\newblock The strength of the {SCT} criterion.
\newblock In T.~V. Gopal, Gerhard J{\"{a}}ger, and Silvia Steila, editors, {\em
	Theory and Applications of Models of Computation - 14th Annual Conference,
	{TAMC} 2017, Bern, Switzerland, April 20-22, 2017, Proceedings}, volume 10185
of {\em Lecture Notes in Computer Science}, pages 260--273, 2017.

\bibitem{Winskel93}
Winskel G.
\newblock {\em The formal semantics of programming languages: an introduction}.
\newblock Foundations of computing. MIT Press, 1993.

\bibitem{HP93}
Petr H{\'a}jek and Pavel Pudl{\'a}k.
\newblock {\em {Metamathematics of first-order arithmetic}}.
\newblock {Perspectives in Mathematical Logic}. Springer-Verlag, Berlin, 1993.

\bibitem{Jones}
Matthias Heizmann, Neil~D. Jones, and Andreas Podelski.
\newblock Size-change termination and transition invariants.
\newblock In {\em {SAS} 2010, Perpignan, France, September 14-16, 2010.
	Proceedings}, pages 22--50, 2010.

\bibitem{Hirschfeldt}
Denis~R. Hirschfeldt and Richard~A. Shore.
\newblock Combinatorial principles weaker than {R}amsey's theorem for pairs.
\newblock {\em Journal of Symbolic Logic}, 72(1):171--206, 2007.

\bibitem{KK}
Alexander~P. Kreuzer and Keita Yokoyama.
\newblock On principles between {$\Sigma_1$}- and {$\Sigma_2$}-induction, and
monotone enumerations.
\newblock {\em J. Math. Log.}, 16(1):1650004, 21, 2016.

\bibitem{Jones1}
Chin~Soon Lee, Neil~D. Jones, and Amir~M. {Ben-Amram}.
\newblock The size-change principle for program termination.
\newblock In {\em Conference Record of {POPL} 2001: The 28th {ACM}
	{SIGPLAN-SIGACT} Symposium on Principles of Programming Languages, London,
	UK, January 17-19, 2001}, pages 81--92, 2001.

\bibitem{Ramsey}
Frank~Plumpton Ramsey.
\newblock On a problem in formal logic.
\newblock {\em Proc. London Math. Soc.}, 30:264--286, 1930.

\bibitem{simpsonord}
Stephen~G. Simpson.
\newblock {Ordinal Numbers and the {H}ilbert Basis Theorem}.
\newblock {\em The Journal of Symbolic Logic}, 53:961--974, 1988.

\bibitem{SOSOA}
Stephen~G. Simpson.
\newblock {\em Subsystems of {S}econd {O}rder {A}rithmetic}.
\newblock Perspectives in Mathematical Logic. Springer-Verlag, 1999.

\bibitem{SimpsonWO}
Stephen~G. Simpson.
\newblock {Comparing WO($\omega^\omega$) with $\Sigma^0_2$ induction}.
\newblock arXiv:1508.02655, 2015.

\bibitem{SCTstar}
Silvia Steila.
\newblock An intuitionistic analysis of size-change termination.
\newblock In Hugo Herbelin, Pierre Letouzey, and Matthieu Sozeau, editors, {\em
	20th International Conference on Types for Proofs and Programs, {TYPES} 2014,
	May 12-15, 2014, Paris, France}, volume~39 of {\em LIPIcs}, pages 288--307.
Schloss Dagstuhl - Leibniz-Zentrum fuer Informatik, 2014.

\bibitem{Tait}
William~W Tait.
\newblock Nested recursion.
\newblock {\em Mathematische Annalen}, 143(3):236--250, 1961.	
\end{thebibliography}

\end{document}